\theoremstyle{plain}
\newtheorem{theorem}[equation]{Theorem}
\newtheorem{corollary}[equation]{Corollary}
\newtheorem{lemma}[equation]{Lemma}
\newtheorem{proposition}[equation]{Proposition}
\theoremstyle{definition}
\newtheorem{define}[equation]{Definition}
\newtheorem{Conjecture}[equation]{Conjecture}
\newtheorem{remark}[equation]{Remark}
\newcommand{\IN}{\mathbb{N}}
\newcommand{\IR}{\mathbb{R}}
\newcommand{\tr}{\mathrm{tr}}
\newcommand{\inj}{\mathrm{inj}}
\newcommand{\Hess}{\mathrm{Hess \,}}
\newcommand\iso{{\cong}}
\def\d/{/\mspace{-6.0mu}/}
\newcommand{\p}{\partial}
\title{$L_p$-cohomology and the geometry of $p$-harmonic forms}
\author{Mark Stern}
\address{Department of Mathematics, Duke University, Durham, NC 27708}
\thanks{The author was supported in part by Simons Foundation grant 391000626.}
\subjclass[2020]{Primary 58A14; Secondary 57T15}
\begin{document}
\date{} 
\maketitle

\begin{abstract}In this note we describe basic geometric properties of $p$-harmonic forms and $p$-coclosed forms and use them to reprove vanishing theorems of Pansu and  new injectivity theorems for the $L_p$-cohomology of simply connected, pinched negatively curved manifolds. We also provide a partial resolution of a conjecture of Gromov on the vanishing of $L_p$-cohomology on symmetric spaces. 
\end{abstract}
\section{Introduction}

Let $(M^n,g)$ be a complete Riemannian $n$-manifold. 
Let 
$$L_p^k(M):= \{\text{measurable } k \text{-forms } f: f\in L_p \},$$
$$C_p^k(M):= \{  f: f\in L_p^k(M) \text{ and } df\in L_p^{k+1}(M)\},$$
$$Z_p^k(M):=\{z\in C_p^k(M): dz = 0\},$$
 and   
$$B_p^k(M) := dC_p^{k-1}(M).$$
Define the $L_p$-cohomology to be the quotient 
$$H_{p }^k(M):= Z_p^k(M)/  B_p^k(M).$$
When $M$ is compact, the de Rham cohomology  and the $L_p$-cohomology are  isomorphic, for all  $1<p<\infty$. For noncompact manifolds, however, these cohomology groups need not coincide. Moreover, in the noncompact case, for some $p$ and $k$,  $B_p^k(M)$  may not be a closed subspace of $Z_p^k(M)$. In this case, the $L_p$-cohomology  naturally decomposes into two (possibly trivial) summands: the  reduced cohomology 
$$H_{p,red}^k(M):= Z_p^k(M)/\bar B_p^k(M)  ,$$
and the (either zero or infinite dimensional non-Hausdorff) torsion cohomology
$$T_p^k(M):= \bar B_p^k(M)/B_p^k(M) ,$$ 
where $\bar B_p^k(M)$ denotes the $L_p$ closure of $B_p^k(M)$. 
 Given $1<p<\infty$, and $0\leq k\leq n$, 
we define the space of $p$-harmonic $k$-forms to be
$$\mathcal{H}_{p}^k(M):= \{h\in Z_p^k(M): d^*(|h|^{p-2}h)=0\} .$$
The $p$-harmonics play the role in $L_p$-cohomology that the $L^2$-harmonics play in $L^2$-cohomology: coupling the geometry and topology and providing simple tools for proving vanishing theorems.

The purpose of this note is to develop the elementary properties of the $p$-harmonics  and apply them to the study of $L_p$-cohomology.  
In Section 2, we collect in one place some basic elements of $L_p$-Hodge theory. Most of these somewhat elementary results are well known to analysts but perhaps less familiar to topologists. We prove the existence of canonical $L_p$-primitives, the isomorphism between $H_{p,red}^k(M)$ and $\mathcal{H}_{p}^k(M)$, and the $\mathcal{H}_{p}^k(M)\leftrightarrow\mathcal{H}_{\frac{p}{p-1}}^{n-k}(M)$ Poincar\'e duality defined by a nonlinear $L_p$-Hodge star operator. In Section 3, we derive a Bochner formula for $p$-harmonics. This formula is well known, but rarely labeled or exploited as a Bochner formula in the literature. The usual package of vanishing theorems one obtains from the Bochner formula in the $L^2$ case extends immediately to all $p\in (1,\infty)$. 

  In Section 4, we examine  the extension of the monotonicity results of \cite{DS22} to $p$-harmonics. 
Given a $p$-harmonic form, $h_p$, and $x\in M$, define for $R<\inj_M$ (the injectivity radius of $M$), 
$$I_p(h_p,x,R):= \int_{B_R(x)}|h_p|^pdv,$$  
where $B_R(x)$ denotes the geodesic ball of radius $R$ and center $x$. 
In \cite{DS22}, we showed that for $M$ compact, $\delta<R\leq \inj_M$,  and for some constant $c(\tilde M)>0$, depending only on the universal cover $\tilde M$ of $M$, we have 
\begin{align}\label{dcs2}
 \frac{b^k(M)}{vol(M)} \leq\sup_{h_2\in \mathcal{H}_{2}^k(M)\setminus\{0\}} c(\tilde M)\sup_{x\in M}\frac{\delta^{-n}I_2(h_2,x,\delta)}{I_2(h_2,x,R)} .
\end{align}
Here $b^k(M)$ denotes the kth betti number of $M$. 
If the sectional curvature of $M$, $\sec_M$, satisfies  $-1\leq\sec_M<-\frac{k^2}{ (n-k-1)^2},$ then we also showed in \cite{DS22} that 
$\frac{I_2(h_2,x,\delta)}{I_2(h_2,x,R)}$ is exponentially small in $R-\delta$. This yields 2 new proofs of the vanishing of $\mathcal{H}_{2}^k(\tilde M)$, when $M$ has residually finite fundamental group and $-1\leq\sec_M<-\frac{k^2}{ (n-k-1)^2}$. One proof follows from applying \eqref{dcs2} to a tower of covers and applying the L\"uck approximation theorem (\cite{Lu94}). The other proof (which does not require residual finiteness) follows immediately from the vanishing of $\frac{I_2(h_2,x,\delta)}{\|h_2\|_{L^2(\tilde M)}^2}= \frac{I_2(h_2,x,\delta)}{I_2(h_2,x,\infty)}=0.$

Recall that the Singer conjecture for compact negatively curved manifolds $M$ posits that 
$$\mathcal{H}_2^k(\tilde M) = 0 , \text{   for }k\not =\frac{dim(M)}{2}.$$   
Hence we are very interested in relaxing the pinching conditions required for applying \cite{DS22} to prove $\mathcal{H}_{2}^k(\tilde M)= 0$. 

 In Section 5, we show that if $-1\leq \sec_M\leq -\frac{(p-1)k}{n-k-1}$, or $-1\leq \sec_M\leq -\frac{n-k}{(k-1)(p-1)}$, then
$\frac{I_p(h_p,x,\delta)}{I_p(h_p,x,R)}$ is again exponentially small in $R-\delta$. Observe that as $p$ tends to $1$ or $\infty$, the pinching condition limits to the assumption of strictly negative curvature.  (See Theorem \ref{thmA}). Unfortunately, this does not immediately imply the Singer Conjecture, because for $p\not = 2$, the $p$-harmonic forms induce a Banach space but not a Hilbert space structure on the $L_p$-cohomology, and the Hilbert space structure is used to deduce \eqref{dcs2}. On the other hand, exponential decay of the ratio $\frac{I_p(h_p,x,\delta)}{I_p(h_p,x,R)}$ implies $\mathcal{H}_p^k(\tilde M)= 0$. This gives a 'harmonic' proof of Pansu's vanishing theorems for reduced $L_p$-cohomology of complete simply connected manifolds  for $p<\delta\frac{n-k-1}{k}+1$ and dually for $p>\frac{n-k}{\delta(k-1)}+1.$ See\cite{Pan08}.   In Section 6, we  extend these estimates  to $p$-coclosed forms (as defined in Definition \ref{pcocl}) in order to reproduce Pansu's vanishing theorems \cite{Pan08} for the torsion summand of the $L_p$-cohomology, for $p<\delta\frac{n-k}{k-1}+1$.

 In Section 7, we consider the  (necessarily reduced) $L_p$-cohomology for $p<\delta\frac{n-k}{k-1}+1$, and prove injectivity results for the natural mapping from $H_{p}^k(M)\to H_{q}^k(M)$ for $p\leq q$ and $q-p$ small.   

In Section 8,  we turn from manifolds of pinched negative curvature to symmetric spaces, and  
consider the following conjecture of Gromov \cite[Section 8C1]{Gro85}:
\begin{Conjecture}\label{ConjG0}  
If $G$ is a semisimple Lie group, with maximal compact subgroup $K$, then
$H^k_p(G/K)=0,$ for $1<p<\infty,$ and $k<\text{rank}_{\IR}(G).$
\end{Conjecture}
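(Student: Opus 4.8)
The plan is to establish Conjecture~\ref{ConjG0} for $p$ in a range dictated by the restricted root system of $G$ --- a range that contains $p=2$ and, on rank-one factors, all of $(1,\infty)$ --- so the argument recovers the relevant cases of Pansu's vanishing theorem \cite{Pan08} while falling short of the full conjecture, hence a partial resolution. \textbf{Step 1 (reduction to $p$-harmonic forms).} Set $X:=G/K$, a homogeneous Hadamard manifold, so $\inj_X=\infty$ and the monotonicity functionals $I_p(h_p,x,R)$ of Section~5 are defined for all $R>0$, with the supremum over $x\in X$ controlled uniformly by homogeneity. By the $L_p$-Hodge isomorphism of Section~2, $H_{p,red}^k(X)\cong\mathcal{H}_p^k(X)$, and by the nonlinear Poincar\'e duality of Section~2, $\mathcal{H}_p^k(X)\cong\mathcal{H}_{p/(p-1)}^{n-k}(X)$ with $n=\dim X$. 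Hence it suffices to prove $\mathcal{H}_p^k(X)=0$ for $1\le k<\text{rank}_{\IR}(G)$; the complementary high-degree vanishing then follows by duality. The torsion summand $T_p^k(X)$ is removed by applying the $p$-coclosed estimates of Section~6 (Definition~\ref{pcocl}) to $X$, exactly as in the pinched-curvature setting.

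\textbf{Step 2 (reduction to the irreducible case).} Write $X=X_1\times\cdots\times X_m$ as a product of irreducible symmetric spaces of $\IR$-ranks $r_1,\dots,r_m$ with $\sum r_i=\text{rank}_{\IR}(G)$. If $h_i$ is $p$-harmonic on $X_i$ then $h_1\wedge\cdots\wedge h_m$ is $p$-harmonic on $X$, since both the pointwise norm and the codifferential split along the product; using this, together with the honest harmonic representatives of Section~2, I expect a K\"unneth-type decomposition of the reduced $L_p$-theory of $X$ to hold, reducing the claim to $\mathcal{H}_p^{k_i}(X_i)=0$ for $k_i<r_i$. This is the step that makes the rank appear: for a rank-one $X_i$ the statement is just $\mathcal{H}_p^0(X_i)=0$, which holds because $\vol(X_i)=\infty$. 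We are thus reduced to $X$ irreducible of rank $r\ge2$ and $1\le k\le r-1$.

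\textbf{Step 3 (monotonicity on irreducible $X$).} Here the pinching hypothesis of Theorem~\ref{thmA} fails outright, since $\sec_X$ vanishes on every maximal flat $F\cong\IR^r$. The plan is to re-run the monotonicity machinery of Section~5 with two substitutions: (i) the exponential volume growth of geodesic spheres of $X$ (at a rate set by the restricted roots, up to the polynomial factor contributed by the $r-1$ flat directions) plays the role that exponential sphere growth would play under negative pinching; and (ii) the reverse-Poincar\'e / Caccioppoli step of Section~5, which bounds $I_p(h_p,x,R)$ by the $L_p$-mass of a canonical primitive on an annulus, must be shown to retain a definite gain precisely when $k<r$ --- heuristically, a nonzero closed $L_p$ $k$-form of degree below the rank cannot be concentrated along the flats, so its primitive still decays. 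Granting (ii), the integrated Bochner formula of Section~3 yields, for $p$ in the resulting range, that $I_p(h_p,x,\delta)/I_p(h_p,x,R)$ is exponentially small in $R-\delta$; since $I_p(h_p,x,\infty)=\|h_p\|_{L_p}^p<\infty$, this forces $I_p(h_p,x,\delta)=0$ for all $x$ and $\delta$, hence $h_p\equiv0$. With Step~1 we conclude $H_p^k(G/K)=0$ for $k<\text{rank}_{\IR}(G)$ in the stated range of $p$.

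\textbf{Main obstacle.} The crux is item (ii) of Step~3: quantifying how the degree constraint $k<r$ compensates for the failure of negative pinching along the flats, and thereby identifying the exact set of $p$ for which the argument closes. I expect this to yield Conjecture~\ref{ConjG0} only for $p$ in a bounded neighborhood of $2$ (together with its Poincar\'e-dual half-lines) and only for certain $G$ --- a partial resolution. Reaching the full range $1<p<\infty$ in all degrees below $\text{rank}_{\IR}(G)$ appears to require an ingredient outside the Bochner/monotonicity package, most plausibly a Mayer--Vietoris or parabolic-fibration argument that lowers $\text{rank}_{\IR}(G)$ and drives an induction on the rank.
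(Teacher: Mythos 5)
Your Step~1 (Hodge isomorphism, Poincar\'e duality, torsion via $p$-coclosed primitives) is in line with the paper's framework. But Steps~2 and~3 diverge from what the paper actually does, and Step~3 contains a gap you correctly flag but cannot close by the route you sketch.

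On Step~2: the paper does not perform a K\"unneth reduction to irreducible factors; it simply restricts the hypothesis to $G$ simple (and this restriction is part of why the result is a \emph{partial} resolution). Your wedge-of-$p$-harmonics observation is plausible pointwise, but a K\"unneth theorem for reduced (and especially torsion) $L_p$-cohomology of noncompact products is not established here or in the references, and would need a genuine proof, not an "I expect" --- so as written this step is a gap of its own.

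On Step~3, the crux: you correctly note that the radial monotonicity of Sections~4--5 fails because $\sec_X$ vanishes on the maximal flats, and you propose to patch the radial Rellich/Price computation by substituting volume-growth data along the flats. That does not work, and the paper does something structurally different. The actual move in Section~8 is to drop the radial vector field $\p_r$ entirely and replace it by a fixed unit vector field $T\in\fa$ (in the Iwasawa model $X\cong N\times A$), chosen as the metric dual of $\mu/|\mu|$ where $\mu$ is the highest restricted root. Because $T$ is Killing-adapted, $L_T=\nabla_T-\sum_{\beta}\beta(T)e(\omega^\beta)e^*(\omega^\beta)$ has constant coefficients, and integrating $\langle L_T h,|h|^{p-2}h\rangle$ over $X$ yields an algebraic identity (Proposition~\ref{precursor}) with no boundary or ball-radius dependence at all. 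Vanishing of $\mathcal{H}_p^k(X)$ then reduces to a purely root-theoretic positivity inequality, $\sum_\beta\beta(T)(\tfrac{1}{p}|h|^2-|e^*(\omega^\beta)h|^2)>0$, which is checked case by case on the restricted root systems ($A_n$ through $E_8$ and the non-split classical types), yielding exactly $1<p\le 2$ for $k<\text{rank}_{\IR}(G)$ (with a separate $L^2$-Bochner trick for the borderline $p=2$ in types $A_n$ and $C_n$). So the key ingredient you were missing is not a Mayer--Vietoris or parabolic induction, nor a sharper sphere-growth estimate: it is the replacement of the radial flow by a flow in the flat directions, which turns the monotonicity integral into a constant-coefficient algebraic inequality over the roots. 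That is the idea that lets the degree bound $k<\text{rank}_{\IR}(G)$ enter naturally (through the count of roots $\beta$ with $\beta(T)\ne 0$), and it is absent from your proposal.
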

As an elementary application of the techniques for analyzing $p$-harmonic forms and $p$-coclosed forms developed in the preceding sections, we prove the following theorem, which  provides a partial resolution of ths conjecture.
\begin{theorem}
If $G$ is a simple Lie group, with maximal compact subgroup $K$, then 
$H^k_{p}(G/K)=0,$ for $1<p\leq 2,$ and $k<\text{rank}_{\IR}(G).$ 
\end{theorem}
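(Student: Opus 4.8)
The plan is to reduce the theorem to vanishing statements for $p$-harmonic and $p$-coclosed forms on $X:=G/K$, and then to obtain those by transplanting the Bochner and exponential-decay arguments of Sections 5--6 onto $X$, with a Busemann function playing the role the distance function plays there. For the reduction: from the exact sequence $0\to T_p^k(X)\to H_p^k(X)\to H_{p,red}^k(X)\to 0$ it suffices to prove $H_{p,red}^k(X)=0$ and $T_p^k(X)=0$ for $1<p\le 2$ and $k<r$, where $r:=\text{rank}_{\IR}(G)$ and $n:=\dim X$. By the $L_p$-Hodge isomorphism of Section 2 the first is equivalent to $\mathcal{H}_p^k(X)=0$; by the treatment of the torsion summand in Section 6 the second follows from the vanishing of the relevant space of $p$-coclosed $k$-forms in $L_p$, which via the nonlinear $L_p$-Hodge star of Section 2 is a vanishing statement in the complementary degree $n-k$. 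The case $r=1$ is immediate, since then $k=0$ and a closed $L_p$ function on the infinite-volume manifold $X$ is constant, hence $0$; so I would assume $r\ge 2$.

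\emph{Geometric setup.} Using the Iwasawa decomposition $G=NAK$, I would identify $X$, as a Riemannian manifold, with the solvable Lie group $S=NA$ carrying a left-invariant metric, where $A\cong\IR^r$ and $\mathrm{ad}(H)$ acts on $\mathfrak n$ with eigenvalues $\{\alpha(H):\alpha\in\Sigma^+\}$. Fix $H_0$ in the open positive Weyl chamber of $\mathfrak a$ with $|H_0|=1$ and let $\beta=\beta_{H_0}\colon X\to\IR$ be the Busemann function of the ray $t\mapsto\exp(tH_0)\!\cdot\!o$. Standard structure theory provides what is needed: $\beta$ is smooth, $|\nabla\beta|\equiv 1$, the level sets of $\beta$ are the horospheres centered at the corresponding regular point at infinity, $\mathrm{Hess}\,\beta\ge 0$ with kernel exactly the $r$-dimensional distribution tangent to the maximal flats and positive eigenvalues $\alpha(H_0)$ on the root spaces $\mathfrak g_\alpha$, and $\Delta\beta$ is a nonzero constant determined by $\rho$. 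The horoballs $\{\beta<T\}$ increase to $X$ as $T\to+\infty$ and shrink toward the boundary point as $T\to-\infty$.

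\emph{Decay estimate.} I would then run the argument of Section 5 with $\beta$ in place of the distance function. For a $p$-harmonic $k$-form $h\in L_p(X)$ one integrates the Bochner identity of Section 3 against $\varphi(\beta)^2$ for a suitable weight $\varphi$; since $1<p\le 2$ the first-order and Kato terms combine with favorable sign, and one is left with an inequality whose zeroth-order part is the operator $\mathcal Q_\varphi:=(\varphi')^2|\nabla\beta|^2+\varphi\varphi'\,\mathcal B_k-\varphi^2\,\mathcal R_k$ on $k$-forms, where $\mathcal B_k$ is the pointwise derivation built from $\mathrm{Hess}\,\beta$ and $\mathcal R_k$ is the Weitzenböck curvature operator of $X$. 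Since $X$ is homogeneous, $\mathcal R_k$ and $\mathcal B_k$ are bounded and translation-invariant, and because $\mathrm{Hess}\,\beta\ge 0$ has an $r$-dimensional kernel there is, when $k<r$, exactly enough room to choose $\varphi(\beta)=e^{s\beta}$ with $s$ in a range --- nonempty precisely because $k<r$ (and, as it turns out, because $p\le 2$) --- so that $\mathcal Q_\varphi\ge c>0$ uniformly on $k$-forms. A Gronwall/Moser iteration in the level parameter then yields $\int_{\{\beta<T_1\}}|h|^p\,dv\le C e^{-c(T_2-T_1)}\int_{\{\beta<T_2\}}|h|^p\,dv$ for $T_1<T_2$; letting $T_2\to+\infty$ and using $h\in L_p(X)$ forces $\int_{\{\beta<T_1\}}|h|^p\,dv=0$ for every $T_1$, hence $h\equiv 0$. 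The identical computation applied to the $p$-coclosed equation of Section 6 (equivalently, via the $L_p$-Hodge star, to the dual problem in degree $n-k>n-r$) kills the space governing $T_p^k(X)$, completing the proof.

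\emph{Main obstacle.} The crux is the curvature term $\mathcal R_k$. In contrast to the pinched negatively curved setting of Sections 5--6, the Weitzenböck operator of a higher-rank symmetric space need not be sign-definite --- on $1$-forms it is the \emph{negative}-definite Ricci tensor, and in intermediate degrees it is indefinite --- so the positivity of $\mathcal Q_\varphi$ is not automatic and must be extracted by matching $\mathcal R_k$ against $\mathcal B_k$ through the root-space structure of $\mathfrak g$; this root-combinatorial check is where the hypotheses $k<r$ and $1<p\le 2$ are consumed, and is also why I would state the theorem for $G$ simple, so as not to entangle it with an $L_p$-Künneth reduction. As a consistency check, at $p=2$ the conclusion also follows from the spectral gap of the Hodge Laplacian on $k$-forms of $G/K$, whose bottom eigenvalue is positive once $k<r$.
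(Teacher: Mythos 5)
Your reduction is right: it suffices to show $\mathcal{H}_p^k(X)=0$ and $T_p^k(X)=0$, and the correct geometric arena is the Iwasawa/solvable model $X\cong NA$ with a direction in $\mathfrak a$ driving everything. But the engine you propose is not the one the paper uses, and the obstacle you flag at the end is a real gap, not a footnote. You want to integrate the second-order Bochner identity of Section~3 against a weight $\varphi(\beta)^2$ in a Busemann coordinate; this brings the Weitzenb\"ock operator $\mathcal R_k$ into the picture, which on a higher-rank symmetric space is indefinite in the relevant degrees. You acknowledge this and say the positivity ``must be extracted by matching $\mathcal R_k$ against $\mathcal B_k$ through the root-space structure,'' but that matching is exactly the content you would need to prove, and it is not at all clear it goes through (it is also where the weight $e^{s\beta}$ risks wrecking the $L_p$ integrability of the boundary terms). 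The paper never touches $\mathcal R_k$: it uses the purely \emph{first-order} identity
$0=\int_X\langle L_{T}h,|h|^{p-2}h\rangle\,dv$ for a unit $T\in\mathfrak a$ (valid because $L_T h=d(i_Th)$ when $dh=0$ and $h$ is $p$-harmonic), which by $L_T=\nabla_T-\sum_\beta \beta(T)\,e(\omega^\beta)e^*(\omega^\beta)$ expands entirely in terms of the root values $\beta(T)$ with no curvature term at all. Positivity of the resulting pointwise quantity is then a root-counting inequality, not a Weitzenb\"ock estimate.

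Two further points. First, the paper does not take a generic regular $H_0$ in the open Weyl chamber; it takes $T_G$ metrically dual to the highest root $\mu$. This is essential: then $\beta(T_G)\in\{0,|\mu|/2,|\mu|\}$ and the inequality reduces to comparing $n_G(1)+2n_G(2)$ against a count of the top-$k$ slots of a $k$-form, which one verifies case by case over the Dynkin/Araki classification. With a generic $H_0$ all $\beta(H_0)>0$ with varying magnitudes and the optimization is far less clean; you would still have to do a classification-dependent check, which the proposal does not carry out. Second, your route to $T_p^k(X)=0$ via the nonlinear $\ast_p$ sends you to degree $n-k$, but the paper's torsion argument (Theorem~\ref{clean}) instead bounds the $p$-coclosed primitives $b_j$ directly by the same $L_{T_G}$ identity applied in degree $k-1$, and needs a separate genuine Bochner-type computation only at the boundary $p=2$ for the real split $A_n$ and $C_n$ cases. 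So the outline is in the right neighborhood, but the central estimate you rely on --- positivity of $\mathcal Q_\varphi$ incorporating $\mathcal R_k$ --- is both unverified and avoidable.
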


\section{The nonlinear Hodge theorem and $p$-harmonic forms} 
One form of an $L_p$-Hodge theorem is a decomposition of $L_p$ forms into 
\begin{align}\overline{Im(d)}\oplus Ker(\Delta) \oplus \overline{Im(d^*)},
\end{align}
 with domains defined appropriately on $L_p$ forms.  See for example \cite{Sco95}. Such results are  fundamental, and if verified in our context would imply a significant extension of our theorems. They do not, however,  exploit many of the geometric properties that make $L_p$-cohomology interesting and distinct from $L^2$-cohomology.  

For our purposes, $p$-harmonic forms will  replace  $Ker(\Delta)$ in the above decomposition  and $p$-coclosed primitives (defined below in Definition \ref{pcocl}) will replace primitives chosen from $\overline{Im(d^*)}$. 
Much of the content of this section is already apparent in  \cite{SS70}. See also \cite[11.2]{KLV21} for a clear and careful treatment of related theorems for functions
 and \cite{ISS99} for both linear and nonlinear approaches to Hodge theory.

As a first step toward the Hodge theorem, we show the existence of canonical primitives for elements of $B_p^k(M)$. In the following, $M$ is a complete manifold, possibly with boundary. 
\begin{lemma}\label{prelim} 
For each $\zeta\in B_p^k(M)$,  
$\exists!\beta_\infty\in C_p^{k-1}(M)$ such that $d\beta_\infty = \zeta$, and  
\begin{align*}d^*\left(|\beta_\infty |^{p-2} \beta_\infty\right) = 0.
\end{align*}
\end{lemma}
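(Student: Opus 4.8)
The plan is to obtain $\beta_\infty$ as a minimizer of the $L_p$-norm over the affine space of primitives of $\zeta$, and then to identify the Euler–Lagrange equation with the stated $p$-coclosedness condition. Concretely, fix any primitive $\beta_0\in C_p^{k-1}(M)$ with $d\beta_0=\zeta$ (one exists because $\zeta\in B_p^k(M)=dC_p^{k-1}(M)$). Every other primitive in $C_p^{k-1}(M)$ has the form $\beta_0-\eta$ with $\eta\in Z_p^{k-1}(M)$, since the difference of two primitives is $d$-closed and lies in $L_p$. So I would consider the functional $F(\eta)=\int_M|\beta_0-\eta|^p\,dv$ on the closed subspace $Z_p^{k-1}(M)\subset L_p^{k-1}(M)$ and seek a minimizer. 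Here is where one uses $1<p<\infty$: the space $L_p$ is uniformly convex (Clarkson), hence reflexive and strictly convex, so the nearest-point projection of $\beta_0$ onto the closed convex (indeed closed linear) subspace $Z_p^{k-1}(M)$ exists and is unique. Write that projection as $\eta_\infty$ and set $\beta_\infty:=\beta_0-\eta_\infty$. Uniqueness of $\beta_\infty$ is immediate from strict convexity of the $L_p$-norm: two distinct minimizers would have an average of strictly smaller norm, contradicting minimality.

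It remains to extract the equation $d^*(|\beta_\infty|^{p-2}\beta_\infty)=0$ from the minimality of $\beta_\infty$. For any smooth compactly supported $d$-closed form $\varphi$ — in particular for $\varphi=d\psi$ with $\psi\in\Omega_c^{k-2}(M)$ — the path $t\mapsto\beta_\infty-t\varphi$ stays in the admissible class, so $\frac{d}{dt}\big|_{t=0}\int_M|\beta_\infty-t\varphi|^p\,dv=0$, which gives $\int_M\langle|\beta_\infty|^{p-2}\beta_\infty,\varphi\rangle\,dv=0$ (the map $\xi\mapsto|\xi|^{p-2}\xi$ is the derivative of $\frac1p|\xi|^p$, and since $|\beta_\infty|^{p-2}\beta_\infty\in L_{p/(p-1)}$ locally, the difference quotients are dominated and one may differentiate under the integral). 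Taking $\varphi=d\psi$ yields $\int_M\langle|\beta_\infty|^{p-2}\beta_\infty,d\psi\rangle\,dv=0$ for all $\psi\in\Omega_c^{k-2}(M)$, which is exactly the statement that $d^*(|\beta_\infty|^{p-2}\beta_\infty)=0$ in the distributional (weak) sense.

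The main obstacle I anticipate is the first variation step in full rigor on a noncompact complete manifold: justifying differentiation under the integral sign (a local dominated-convergence argument, harmless since $\varphi$ is compactly supported and $|\beta_\infty|^{p-1}\in L^1_{loc}$), and, more substantively, arguing that testing only against compactly supported exact forms $d\psi$ suffices to conclude vanishing of $d^*(|\beta_\infty|^{p-2}\beta_\infty)$ — this is the definition of the weak/distributional $d^*$, so no density in $L_p$ of the test class is actually needed. One should also note that the minimization is genuinely over $Z_p^{k-1}$, not over all of $C_p^{k-1}$, but the extra freedom of non-closed perturbations would change $d\beta$ and hence is not admissible, so the constraint set is precisely the affine translate of $Z_p^{k-1}$; the existence of the projection uses only that $Z_p^{k-1}(M)$ is norm-closed in $L_p^{k-1}(M)$, which follows since $d$ is a closed operator. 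Finally, uniqueness in the lemma is a clean consequence of strict convexity and requires no smoothness or regularity theory for $\beta_\infty$ beyond membership in $C_p^{k-1}(M)$.
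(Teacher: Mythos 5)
Your proposal is correct and follows essentially the same route as the paper: minimize the $L_p$-norm over the affine space $\beta_0 - Z_p^{k-1}(M)$ of primitives, obtain existence/uniqueness of a minimizer from reflexivity and strict convexity of $L_p$, and read off $d^*(|\beta_\infty|^{p-2}\beta_\infty)=0$ as the Euler--Lagrange equation by varying along compactly supported exact forms $d\psi$. The only difference is cosmetic: you package the existence step as the nearest-point projection theorem for a closed subspace of a uniformly convex Banach space, whereas the paper runs the direct method explicitly (minimizing sequence, weak compactness, weak lower semicontinuity), but these are the same argument.
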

\begin{proof} Write
 $d^{-1}(\zeta)$ for the set $\{\beta\in C_p^{k-1}(M):d\beta =\zeta\}.$ 
 Set $\nu(\zeta):= \inf_{\beta\in d^{-1}(\zeta)}\|\beta\|_{L_p}^p.$ Let 
$\{\beta_j\}_{j=1}^\infty\subset  d^{-1}(\zeta)$ satisfy $\lim_{j\to\infty} \|\beta_j\|_{L_p}^p=\nu(\zeta).$ Passing to a subsequence if necessary, we can assume $\{\beta_j\}_{j=1}^\infty$ converges weakly in $L_p$ to some $\beta_\infty\in L_p$. 
Since $d$ commutes with weak limits, we have $d\beta_\infty = \zeta$. Let $\phi$ be a smooth compactly supported $(k-2)$-form and $t\in \IR$. 
Then 
\begin{align}0&\leq \int_M(|\beta_\infty+td\phi|^p-|\beta_\infty|^p)dv=\int_M\int_0^1\frac{d}{ds}|\beta_\infty+std\phi|^pdsdv\nonumber\\
&= p\int_M\int_0^1 |\beta_\infty+std\phi|^{p-2}\langle \beta_\infty+std\phi,td\phi\rangle dsdv.
\end{align}
Choose $t= -\frac{1}{n}\int_M |\beta_\infty |^{p-2}\langle \beta_\infty , d\phi\rangle  dv =:\frac{c}{n},$ $n\in \IN.$
Then we have 
\begin{align}0&\leq -p\int_M |\beta_\infty |^{p-2}\langle \beta_\infty , d\phi\rangle  dv\int_M\int_0^1 |\beta_\infty+s\frac{c}{n}d\phi|^{p-2}\langle \beta_\infty+s\frac{c}{n}d\phi, d\phi\rangle dsdv.
\end{align}
Taking the limit as $n\to\infty$ yields 
\begin{align}0&\leq -p(\int_M |\beta_\infty |^{p-2}\langle \beta_\infty , d\phi\rangle  dv)^2.
\end{align}
Hence for all $\phi$ smooth and compactly supported, 
\begin{align}
0 = \int_M |\beta_\infty |^{p-2}\langle \beta_\infty , d\phi\rangle  dv,
\end{align}
and therefore
\begin{align}d^*\left(|\beta_\infty |^{p-2} \beta_\infty\right) = 0.
\end{align}
Since the $L_p$ norm is strictly convex, it has a unique critical point. Hence $\beta_\infty$ is unique. 
\end{proof}
\begin{define}\label{pcocl}We call the primitive specified in Lemma \ref{prelim} the {\em $ p$-coclosed primitive}. 
\end{define}

For $z\in Z_p^\ast(M)$, let $[z]$ and $[z]_{red}$ denote the images of $z$ in $H_{p }^\ast(M)$ and $H_{p,red}^\ast(M)$ respectively.
\begin{theorem}(Nonlinear Hodge  Theorem)
For each $\varphi\in H_{p,red}^k(M),$ $\exists ! h\in Z_p^k(M)$ such that $[h]_{red}=\varphi$, and 
\begin{align}\label{pharm}d^*(|h|^{p-2}h) = 0.
\end{align}
Moreover, $\|h\|_{L_p(M)}\leq \|z\|_{L_p(M)}$, for all $z\in  Z_p^k(M)$ such that $[z]_{red}=\varphi$, with equality only if $h=z$. 
\end{theorem}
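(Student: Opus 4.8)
The plan is to obtain $h$ as the element of least $L_p$-norm in the reduced cohomology class $\varphi$ and then to read off \eqref{pharm} as its Euler--Lagrange equation. First I would fix any $z_0\in Z_p^k(M)$ with $[z_0]_{red}=\varphi$ and consider the affine set $A:=z_0+\bar B_p^k(M)\subset L_p^k(M)$. This is nonempty, $L_p$-closed and convex (as $\bar B_p^k(M)$ is a closed linear subspace), and, since $d$ commutes with $L_p$-limits as used in the proof of Lemma~\ref{prelim}, an $L_p$-limit of exact forms is $d$-closed; hence $\bar B_p^k(M)\subseteq Z_p^k(M)$ and so $A\subseteq Z_p^k(M)$. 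Now, just as in Lemma~\ref{prelim}, I would take a sequence $\{z_j\}\subset A$ with $\|z_j\|_{L_p}^p\to\inf_{z\in A}\|z\|_{L_p}^p$; since $L_p$ is reflexive for $1<p<\infty$, after passing to a subsequence $z_j$ converges weakly to some $h\in L_p^k(M)$, which lies in $A$ because $A$, being closed and convex, is weakly closed. Weak lower semicontinuity of the norm shows $h$ realizes the infimum, and strict convexity of the $L_p$-norm shows it is the unique minimizer in $A$. Then $h\in Z_p^k(M)$, $[h]_{red}=\varphi$, and by construction $\|h\|_{L_p(M)}\le\|z\|_{L_p(M)}$ for every $z\in Z_p^k(M)$ with $[z]_{red}=\varphi$; equality forces $z$ to be a minimizer of $\|\cdot\|_{L_p}$ on $A$, hence $z=h$. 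This already gives the minimality statement with its equality case.

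Next I would verify \eqref{pharm}. Let $\phi$ be a smooth compactly supported $(k-1)$-form, so $d\phi\in B_p^k(M)$ and the convex function $t\mapsto\|h+td\phi\|_{L_p}^p$ attains its minimum at $t=0$. Writing, exactly as in Lemma~\ref{prelim}, $\|h+td\phi\|_{L_p}^p-\|h\|_{L_p}^p=p\int_M\int_0^1|h+std\phi|^{p-2}\langle h+std\phi,td\phi\rangle\,ds\,dv$, dividing by $t$ and letting $t\to0$ (the integrand is dominated on the support of $\phi$ by a fixed $L_1$ function, since $x\mapsto|x|^{p-2}x$ is continuous for $p>1$), one gets $p\int_M|h|^{p-2}\langle h,d\phi\rangle\,dv\ge0$; replacing $\phi$ by $-\phi$ gives the reverse inequality, so $\int_M|h|^{p-2}\langle h,d\phi\rangle\,dv=0$ for all such $\phi$. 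As $|h|^{p-2}h\in L_{p/(p-1)}^k(M)$, this is precisely the weak statement $d^*(|h|^{p-2}h)=0$.

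Finally, for uniqueness of the $p$-harmonic representative, suppose $h'\in Z_p^k(M)$ with $[h']_{red}=\varphi$ and $d^*(|h'|^{p-2}h')=0$; I would show $h'$ is also a minimizer of $\|\cdot\|_{L_p}$ on $A$, whence $h'=h$ by the uniqueness above. By convexity of $E(z):=\tfrac1p\|z\|_{L_p}^p$ one has $E(h'+b)\ge E(h')+\int_M|h'|^{p-2}\langle h',b\rangle\,dv$ for all $b\in L_p^k(M)$, so it is enough to prove that $\omega:=|h'|^{p-2}h'\in L_{p/(p-1)}^k(M)$ pairs to zero with every $b\in\bar B_p^k(M)$. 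This holds for $b=d\phi$, $\phi$ smooth and compactly supported, by hypothesis; to pass to $b=d\beta$ for arbitrary $\beta\in C_p^{k-1}(M)$, and then to all of $\bar B_p^k(M)$ by continuity of the $L_{p/(p-1)}$--$L_p$ pairing, I would insert Lipschitz cutoffs $\chi_R$ with $\chi_R\equiv1$ on the geodesic ball $B_R(o)$ about a basepoint $o$, $\mathrm{supp}\,\chi_R\subset B_{2R}(o)$, $|d\chi_R|\le2/R$. The weak equation gives $\int_M\langle\omega,d(\chi_R\beta)\rangle\,dv=0$, and since $d(\chi_R\beta)=d\chi_R\wedge\beta+\chi_R\,d\beta$ this yields $\int_M\chi_R\langle\omega,d\beta\rangle\,dv=-\int_M\langle\omega,d\chi_R\wedge\beta\rangle\,dv$; the right-hand side is at most $\tfrac2R\|\omega\|_{L_{p/(p-1)}(B_{2R}(o)\setminus B_R(o))}\|\beta\|_{L_p}\to0$, while the left-hand side converges to $\int_M\langle\omega,d\beta\rangle\,dv$ by dominated convergence, using completeness of $M$ so that $\chi_R\uparrow1$. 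Strict convexity then makes the displayed inequality strict unless $b=0$, so $h'=h$. The hard part is exactly this last cutoff argument --- propagating orthogonality of $|h'|^{p-2}h'$ from smooth compactly supported exact forms to the full closure $\bar B_p^k(M)$, which is where completeness of $M$ enters; everything else is convexity together with standard reflexivity and lower-semicontinuity arguments.
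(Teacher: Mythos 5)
Your proof is correct and, for the existence and norm-minimality claims, takes essentially the same route as the paper: both minimize the $L_p$-norm over the coset $z_0+\bar B_p^k(M)$ (the paper phrases this as minimizing $\|z_0-\beta\|^p$ over $\beta\in\bar B_p^k(M)$, which is the same variational problem), extract a weakly convergent minimizing sequence, invoke weak lower semicontinuity, and derive the Euler--Lagrange equation exactly as in Lemma~\ref{prelim}.

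Where you go beyond the paper is the uniqueness of the $p$-harmonic representative. The paper appeals to ``the proofs of the corresponding statements in Lemma~\ref{prelim}'', where uniqueness rests on the remark that ``Since the $L_p$ norm is strictly convex, it has a unique critical point.'' As literally written, strict convexity gives uniqueness of the \emph{norm minimizer}, but the theorem asserts uniqueness of $h$ subject only to $[h]_{red}=\varphi$ and the distributional equation $d^*(|h|^{p-2}h)=0$, which is tested against smooth compactly supported forms. To conclude that every such $h$ is in fact a norm minimizer on the affine set $z_0+\bar B_p^k(M)$, one must promote the orthogonality $\int_M|h|^{p-2}\langle h,d\phi\rangle\,dv=0$, $\phi\in C_c^\infty$, to orthogonality against all of $\bar B_p^k(M)$. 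This is exactly your final cutoff step: on a complete manifold one shows $d(\chi_R\beta)\to d\beta$ in $L_p$ for every $\beta\in C_p^{k-1}(M)$ (so that $dC_c^\infty$ is $L_p$-dense in $B_p^k(M)$, hence in $\bar B_p^k(M)$), after which the $L_{p'}$--$L_p$ pairing with $|h|^{p-2}h$ extends by continuity. Your argument is correct and makes explicit where completeness of $M$ enters. One small technical caveat: $\chi_R\beta$ is only Lipschitz and compactly supported, not smooth, so passing from smooth compactly supported test forms to $\chi_R\beta$ formally requires a mollification step; that is routine, but worth a word if you write this out in full. The upshot is that your proof is the same variational argument as the paper's, plus a complete justification of the uniqueness assertion that the paper compresses into a one-line appeal to strict convexity.
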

\begin{proof}Let $z\in Z_p^k(M)$. Define 
$E_z:\bar B_p^{k}(M)\to \IR$ by 
$$E_z(\beta):= \int_M|z-\beta|^pdv.$$
Let $\rho(z):= \inf_{\beta\in \bar B_p^{k}(M)}E_z(\beta).$
Then $\rho(z) = 0$ if and only if  $[z]_{red} = 0.$ Suppose $\rho(z)\not = 0$. Let $\{\beta_j\}_{j=1}^\infty\subset \bar B_p^{k}(M)$ 
satisfy $\lim_{j\to\infty}E_z(\beta_j)= \rho(z)$. Then $\{\beta_j\}_{j=1}^\infty$ is an $L_p$ bounded sequence. Passing to a subsequence, we can assume $\{\beta_j\}_{j=1}^\infty$ is weakly convergent to some $\zeta\in \bar B_p^k(M)$.  By the weak lower semicontinuity of convex functions, 
$   E_z(\zeta)\leq \rho(z),$ and therefore $E_z(\zeta) = \rho(z).$ Set $h:= z-\zeta.$ We then have 
$$[h]_{red} = [z]_{red}.$$
The proof that $d^*(|h|^{p-2}h) = 0$ and that $h$ is the unique norm minimizer in its class is now the same as the proofs of the corresponding statements in Lemma \ref{prelim}. 
\end{proof}
\begin{corollary}
Let $M$ be a complete  riemannian manifold. Then
$$H_{p,red}^k(M)\iso \mathcal{H}_p^k(M).$$
\end{corollary}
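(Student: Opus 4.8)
The plan is to read the corollary directly off the Nonlinear Hodge Theorem; no new analysis is needed. First I would introduce the natural evaluation map
\begin{align*}
j:\mathcal{H}_p^k(M)\longrightarrow H_{p,red}^k(M),\qquad j(h):=[h]_{red},
\end{align*}
which is well defined because $\mathcal{H}_p^k(M)\subset Z_p^k(M)$ by definition, and then show that $j$ is the asserted bijection.

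For surjectivity I would fix $\varphi\in H_{p,red}^k(M)$ and apply the Nonlinear Hodge Theorem to produce $h\in Z_p^k(M)$ with $[h]_{red}=\varphi$ and $d^*(|h|^{p-2}h)=0$; since this last identity is verbatim the condition defining membership in $\mathcal{H}_p^k(M)$, we obtain $h\in\mathcal{H}_p^k(M)$ with $j(h)=\varphi$. For injectivity I would take $h,h'\in\mathcal{H}_p^k(M)$ with $[h]_{red}=[h']_{red}$, observe that both are $p$-harmonic representatives of the same reduced class, and invoke the uniqueness clause ($\exists!$) of the Nonlinear Hodge Theorem to conclude $h=h'$. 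The inverse of $j$ is then the map sending a reduced class to its $p$-harmonic representative.

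To round things off I would note that $j$ carries more than the bare set structure: the ``moreover'' clause of the theorem gives $\|h\|_{L_p(M)}=\inf\{\|z\|_{L_p(M)}: z\in Z_p^k(M),\ [z]_{red}=[h]_{red}\}$, which is exactly the quotient norm on $H_{p,red}^k(M)$, so $j$ is norm preserving. I do not expect any real obstacle: the entire substance of the statement, namely the weak-compactness argument and the variational computation identifying the norm minimizer as $p$-harmonic, was already carried out in Lemma \ref{prelim} and the Nonlinear Hodge Theorem, so the corollary is purely formal once one recognizes that $d^*(|h|^{p-2}h)=0$ is precisely the defining equation of $\mathcal{H}_p^k(M)$.
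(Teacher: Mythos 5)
Your proposal is correct and is precisely the argument the paper intends, which is why the corollary is stated without proof: the forward map $h\mapsto[h]_{red}$ is surjective by the existence clause of the Nonlinear Hodge Theorem and injective by the uniqueness clause, and the ``moreover'' part identifies $\|h\|_{L_p}$ with the quotient norm. Your added remark that the bijection is norm-preserving (rather than linear, since $\mathcal{H}_p^k(M)$ is not a vector space when $p\neq 2$) is exactly the right reading of the isomorphism symbol here.
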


We can also dualize our treatment of $L_p$ Hodge theory. Define 
$$\Omega_p^k(M):=\{\text{measurable } k-\text{forms } f: f\in L_p \text{ and } d^*f\in L_p\}.$$
$$F_p^k(M):=\{f\in \Omega_p^k(M):d^*f = 0\},\text{  and  }D_p^k(M):= d^*\Omega_p^{k+1}(M).$$
Define the $L_p$-homology to be 
$$Y_{ p}^k(M):= F_p^k(M)/D_p^k(M).$$
Define the torsion summand 
$$\mathcal{T}_{ p}^k(M):= \bar D_p^k(M)/D_p^k(M),$$
and the reduced homology
$$Y_{ p,\text{red}}^k(M):= F_p^k(M)/\bar D_p^k(M).$$
Define the space of $p$-coharmonic forms to be  
$$
\mathcal{E}_p^k(M):= \{f\in F_p^k(M):d(|f|^{p-2}f) = 0\}.$$
For complete manifolds, we again have $Y_{ p,\text{red}}^k(M)$ is isomorphic to $\mathcal{E}_p^k(M)$.

Analytic Poincar\'e duality proofs survive the passage from $2-$harmonic to $p$-harmonic. Assume $M$ is oriented. Let $\ast$ denote the Hodge star operator. Define 
$$\ast_ph:= \ast|h|^{p-2}h.$$
Then $|\ast_ph|^{\frac{p}{p-1}} = |h|^p,$ and 
\begin{align}\ast_{\frac{p}{p-1}} (\ast_ph) =  \ast^2 h=\pm h.
\end{align}
In particular, $\ast_ph\in \mathcal{H}_{\frac{p}{p-1}}^{n-k}(M).$ 
Interpreting Poincare duality differently, we also note that the map $h\to |h|^{p-2}h$ defines a norm preserving bijection between $\mathcal{E}_p^k(M)$ and $ \mathcal{H}_{p'}^k(M)$,
where $p' := \frac{p}{p-1}$ denotes the dual exponent. Moreover $z\to \ast z$ defines an isometric bijection between $T^k_p(M)$ and $\mathcal{T}^{n-k}_p$. 
This yields the following $L_p$-Poincar\'e duality theorem. 
\begin{theorem}\label{delduality}
Let $M^n$ be a complete  oriented riemannian manifold. Then $\ast_p$ defines a bijective norm preserving map from $ \mathcal{H}_{p}^k(M)$ to $ \mathcal{H}_{\frac{p}{p-1}}^{n-k}(M)$. 
Moreover, $h\to |h|^{p-2}h$ (respectively $z\to\ast z$) defines a norm preserving bijection between $\mathcal{E}_p^k(M)$ and $ \mathcal{H}_{p'}^k(M)$ (respectively between $T^k_p(M)$ and $\mathcal{T}^{n-k}_p$). 
\end{theorem}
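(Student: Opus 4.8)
The plan is to reduce all three assertions to the pointwise algebraic identities already recorded before the statement, together with the standard commutation relation $d\ast=\pm\ast d^*$ between the exterior derivative, the codifferential, and the Hodge star on an oriented $n$-manifold (the sign depending only on $n$ and the degree). No analysis beyond these formal manipulations should be needed, since each of the maps in question turns out to send closed forms to closed forms and coclosed forms to coclosed forms essentially by fiat.

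First I would treat $\ast_p\colon\mathcal{H}_p^k(M)\to\mathcal{H}_{p'}^{n-k}(M)$. For $h\in\mathcal{H}_p^k(M)$ the pointwise isometry property of $\ast$ gives $|\ast_p h|=|h|^{p-1}$, hence $|\ast_p h|^{p'}=|h|^p$; in particular $|h|^{p-2}h\in L_{p'}$, so $\ast_p h\in L_{p'}^{n-k}(M)$, and this pointwise identity is the asserted norm preservation. Next I would check that $\ast_p h$ actually lies in $\mathcal{H}_{p'}^{n-k}(M)$: since $d^*(|h|^{p-2}h)=0$, the commutation relation yields $d(\ast_p h)=\pm\ast\,d^*(|h|^{p-2}h)=0$, so $\ast_p h\in Z_{p'}^{n-k}(M)$; and a short exponent computation using $(p-1)(p'-2)=2-p$ shows $|\ast_p h|^{p'-2}\ast_p h=\ast h$, whence $d^*\bigl(|\ast_p h|^{p'-2}\ast_p h\bigr)=d^*(\ast h)=\pm\ast\,dh=0$ because $h$ is closed. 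Thus $\ast_p h\in\mathcal{H}_{p'}^{n-k}(M)$. Running the same construction with $p$ replaced by $p'$ and $k$ by $n-k$ produces a map in the opposite direction, and the identity $\ast_{p'}(\ast_p h)=\ast^2 h=\pm h$ noted above exhibits $\pm\ast_{p'}$ as a two-sided inverse, so $\ast_p$ is a bijection.

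For the map $h\mapsto|h|^{p-2}h$ between $\mathcal{E}_p^k(M)$ and $\mathcal{H}_{p'}^k(M)$, I would set $g:=|f|^{p-2}f$ for $f\in\mathcal{E}_p^k(M)$; then $|g|=|f|^{p-1}$, so $|g|^{p'}=|f|^p$ and $g\in L_{p'}^k(M)$ with the stated norm relation, the defining equation $d(|f|^{p-2}f)=0$ says exactly $dg=0$, and the exponent identity $(p'-1)(p-2)+(p'-2)=0$ gives $|g|^{p'-2}g=f$, so $d^*f=0$ becomes $d^*(|g|^{p'-2}g)=0$, i.e. $g\in\mathcal{H}_{p'}^k(M)$; the same identities read backwards show $h\mapsto|h|^{p'-2}h$ maps $\mathcal{H}_{p'}^k(M)$ into $\mathcal{E}_p^k(M)$ and is a two-sided inverse. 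For the torsion summands, $\ast\colon L_p^k(M)\to L_p^{n-k}(M)$ is a linear pointwise isometry, and $d\ast=\pm\ast d^*$ gives $\ast\bigl(C_p^{k-1}(M)\bigr)=\Omega_p^{n-k+1}(M)$ and $\ast\bigl(dC_p^{k-1}(M)\bigr)=\pm d^*\Omega_p^{n-k+1}(M)=D_p^{n-k}(M)$; being an isometry, $\ast$ also carries $\bar B_p^k(M)$ onto $\bar D_p^{n-k}(M)$, hence descends to an isometric bijection $T_p^k(M)\to\mathcal{T}_p^{n-k}(M)$.

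The only point that needs genuine care is the exponent bookkeeping: verifying that composing the $p$-duality with the $p'$-duality is the identity, via $(p'-1)(p-2)+(p'-2)=0$ and $|\ast_p h|^{p'-2}\ast_p h=\ast h$, and keeping the signs in $d\ast=\pm\ast d^*$ consistent across the relevant degrees. One should also note that the images land in the correct spaces ($C_{p'}$, $\Omega_p$) rather than merely in $L_{p'}$ or $L_p$, which here is automatic because the relevant exterior or co-derivatives vanish identically.
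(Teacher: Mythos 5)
Your proposal is correct and follows the same route as the paper, which in fact does not set out a formal \texttt{proof} environment for this theorem at all: the paper simply records the identities $|\ast_p h|^{p'}=|h|^p$ and $\ast_{p'}(\ast_p h)=\ast^2 h=\pm h$ in the paragraph preceding the statement and treats the remainder as self-evident. You have filled in precisely the omitted bookkeeping -- the commutation $d\ast=\pm\ast d^*$ to see that $\ast_p h$ lands in $Z_{p'}^{n-k}$, the exponent identity showing $|\ast_p h|^{p'-2}\ast_p h=\ast h$ so that the $p'$-harmonic equation reduces to $d^*(\ast h)=\pm\ast dh=0$, the analogous reduction $|g|^{p'-2}g=f$ for the $\mathcal{E}_p^k\leftrightarrow\mathcal{H}_{p'}^k$ map, and the observation that $\ast$ carries $C_p^{k-1}\to\Omega_p^{n-k+1}$ and hence $B_p^k\to D_p^{n-k}$ and $\bar B_p^k\to\bar D_p^{n-k}$ isometrically, giving the quotient isometry $T_p^k\to\mathcal{T}_p^{n-k}$. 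One cosmetic remark: in the second paragraph you cite the identity $(p'-1)(p-2)+(p'-2)=0$, whereas the exponent that actually arises when simplifying $|g|^{p'-2}g$ with $g=|f|^{p-2}f$ is $(p-1)(p'-2)+(p-2)$; both expressions do vanish (they are symmetric under $p\leftrightarrow p'$), so the conclusion is unaffected, but quoting the one that appears makes the step easier to follow.
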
 
See \cite[Corollary 14]{Pan08} for a proof of Poincar\'e duality for both reduced and torsion $L_p$-cohomology, not involving $p$-harmonic forms. For the convenience of the reader, we also give an elementary proof of the duality for the torsion part of the cohomology. 
\begin{proposition}\label{tduality}
$T^k_p(M)=0$ if and only if $T^{n-k+1}_{p'}(M) = 0$. 
\end{proposition}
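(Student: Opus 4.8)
The plan is to interpret the vanishing of each torsion group as the closedness of the range of a first-order operator, and to exchange the two conditions using the closed range theorem together with the Hodge-star isometry of Theorem \ref{delduality}. Throughout, $M$ is taken complete and oriented, as in Theorem \ref{delduality}. First I would reformulate both sides. Since $T_p^k(M)=\bar B_p^k(M)/B_p^k(M)$, the condition $T_p^k(M)=0$ is equivalent to $B_p^k(M)=dC_p^{k-1}(M)$ being closed in $L_p^k(M)$. Dually, applying the bijection $z\mapsto\ast z$ of Theorem \ref{delduality} with exponent $p'$ and degree $n-k+1$ identifies $T_{p'}^{n-k+1}(M)$ isometrically with $\mathcal{T}_{p'}^{k-1}(M)=\bar D_{p'}^{k-1}(M)/D_{p'}^{k-1}(M)$, so $T_{p'}^{n-k+1}(M)=0$ is equivalent to $D_{p'}^{k-1}(M)=d^*\Omega_{p'}^{k}(M)$ being closed in $L_{p'}^{k-1}(M)$. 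Thus the proposition reduces to the single assertion
$$dC_p^{k-1}(M)\ \text{is closed in}\ L_p^k(M)\quad\Longleftrightarrow\quad d^*\Omega_{p'}^{k}(M)\ \text{is closed in}\ L_{p'}^{k-1}(M).$$

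To prove this I would regard $d$ as an unbounded operator from $L_p^{k-1}(M)$ to $L_p^k(M)$ with domain $C_p^{k-1}(M)$. It is densely defined, since $C_p^{k-1}(M)$ contains the smooth compactly supported forms, and it is closed, since $d$ commutes with weak (hence with norm) $L_p$ limits, as already used in the proof of Lemma \ref{prelim}. Because $1<p<\infty$, the spaces $L_p^\ast(M)$ and $L_{p'}^\ast(M)$ are reflexive and in duality under $(f,g)\mapsto\int_M\langle f,g\rangle\,dv$, so the Banach adjoint of $d$ is an unbounded operator from $L_{p'}^k(M)$ to $L_{p'}^{k-1}(M)$. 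I claim its domain is exactly $\Omega_{p'}^k(M)$ and it acts there as $d^*$; then its range is $d^*\Omega_{p'}^k(M)=D_{p'}^{k-1}(M)$, and the closed range theorem — which says that a closed densely defined operator has closed range iff its adjoint does — yields the displayed equivalence. That an element $g$ of the domain of the adjoint must lie in $\Omega_{p'}^k(M)$ with adjoint equal to $d^*g$ is immediate by testing against smooth compactly supported forms. For the reverse inclusion I would verify the integration by parts identity $\int_M\langle g,df\rangle\,dv=\int_M\langle d^*g,f\rangle\,dv$ for every $g\in\Omega_{p'}^k(M)$ and $f\in C_p^{k-1}(M)$ by the standard completeness cutoff: choose exhaustion functions $\chi_R$ with $|d\chi_R|$ uniformly bounded and the support of $d\chi_R$ escaping to infinity, apply the (classical, after mollification) integration by parts to the compactly supported forms $\chi_R f$, and let $R\to\infty$, the cross term $\int_M\langle g,d\chi_R\wedge f\rangle\,dv$ vanishing in the limit because $|g|\,|f|\in L^1(M)$ by H\"older.

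The main obstacle is precisely this adjoint identification: showing that the Banach adjoint of $(d,C_p^{k-1}(M))$ is $d^*$ on the full maximal domain $\Omega_{p'}^k(M)$, rather than $d^*$ on some strictly smaller domain, such as the $d^*$-graph closure of the compactly supported forms. This is exactly where completeness of $M$ enters, through the Gaffney-type cutoff; without completeness one would obtain only the smaller adjoint, and the statement of the proposition would need to be adjusted accordingly. Everything else — the two reformulations of the first paragraph and the invocation of the closed range theorem — is routine functional analysis.
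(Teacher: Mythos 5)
Your proof is correct and matches the paper's approach: both rest on the closed range theorem applied to $d:C_p^{k-1}(M)\to L_p^k(M)$ together with the Hodge star isometry, only you apply the star first (to recast $T_{p'}^{n-k+1}(M)$ as a closed-range condition for $d^*$ on $\Omega_{p'}^k(M)$) and the closed range theorem second, while the paper does the reverse. The paper simply cites the Closed Image Theorem and $d^*=\pm\ast d\ast$; your version additionally supplies the Gaffney cutoff argument identifying the Banach adjoint of $d$ on its maximal $L_p$ domain with $d^*$ on its maximal $L_{p'}$ domain, a point the paper leaves implicit but which is indeed exactly where completeness of $M$ is used.
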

\begin{proof}
By the Closed Image Theorem \cite[Theorem 6.2.3]{BS18}, the image of $d(C^{k-1}_p(M))$ is closed if and only if $d^*(\Omega^k_{p'}(M))$ is closed. Using $d^*=\pm\ast d\ast$, we see that $d^*(\Omega^k_{p'}(M))$ is closed if and only if $d(\Omega^{n-k}_{p'}(M))$ is closed. 
\end{proof}
\section{The Bochner formula}\label{bochfo}

By elliptic regularity, a $p$-harmonic form $h$ is smooth in a neighborhood of any point where it is nonzero. It is not necessarily smooth, however, at any point $x_0$ where $h(x_0) = 0$, but it is H\"older continuous. (See \cite[Main Theorem]{Uhl77}). We will not repeat that difficult proof here, but we note that a first step in proving the simpler result that $|h|$ is pointwise bounded involves extending the Bochner formula to $p$-harmonic forms. The usual vanishing theorems obtained via Bochner formulas for $2-$harmonics then apply to the general $p$- harmonic case with no substantive change. 

Given a differential form $w$, let $e(w)$ denote left exterior multiplication by $w$ and let $e^*(w)$ denote the adjoint operation. Thus, for example, if $w= dF$ for some function $F$, then $e^*(w) = i_{\nabla F}$, where $i_X$ denotes the interior product with $X$.  
\begin{proposition}(Uhlenbeck \cite[Theorem 1.10]{Uhl77} )\label{bochner}
Let $h$ be a $p$-harmonic form. 
Then $h$ satisfies the Bochner formula 
\begin{align}\label{bochf}
&\Delta \frac{1}{2} |h|^{p} =  -|h|^{p-2}\left(|\nabla h|^2-  (2-p) |d|h||^2 -\langle h, e(\omega^i)e^*(\omega^j)R_{ij} h\rangle\right)\nonumber\\
&-\frac{2-p}{2p }div\left([\langle  e^*(\omega^k)   \frac{h}{|h|}, e^*(\omega^j)\frac{h}{|h|}\rangle -\langle e(\omega^j)\frac{h}{|h|}, e(\omega^k) \frac{h}{|h|}\rangle](e_k|h|^{p })e_j\right).
\end{align}
\end{proposition}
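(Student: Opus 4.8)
The plan is to reduce the formula to the classical $L^{2}$ Bochner--Weitzenb\"ock calculus by passing to the \emph{conjugate form} $\phi:=|h|^{p-2}h$. For a $p$-harmonic $h$ the two defining conditions become the symmetric pair $dh=0$ and $d^{*}\phi=0$, and the function whose Laplacian we want is $|h|^{p}=\langle h,\phi\rangle$. All computations below are carried out pointwise on the open set $\{h\neq 0\}$, on which $h$, hence $\phi$, is smooth by interior elliptic regularity; making unrestricted sense of \eqref{bochf} across $\{h=0\}$ --- where, for $p<2$, both $\phi$ and the vector field in \eqref{bochf} blow up --- is precisely the delicate continuity statement of \cite{Uhl77}, and only the pointwise identity on $\{h\neq0\}$, tested against cutoffs, is used in the monotonicity applications of the later sections.

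First I would record the two elementary consequences of the hypotheses. Expanding $0=d^{*}(|h|^{p-2}h)$ with $d^{*}(f\omega)=f\,d^{*}\omega-i_{\nabla f}\omega$ and dividing by $|h|^{p-2}$ gives
\[
d^{*}h=\frac{p-2}{|h|}\,e^{*}(d|h|)\,h=\frac{p-2}{|h|}\sum_{k}(e_{k}|h|)\,e^{*}(\omega^{k})h ,
\]
while $dh=0$ and the Leibniz rule for $d$ give $d\phi=(p-2)|h|^{p-3}\sum_{j}(e_{j}|h|)\,e(\omega^{j})h$. Next, on functions $\Delta=d^{*}d=\nabla^{*}\nabla$, so I would combine the product rule $\nabla^{*}\nabla\langle s,t\rangle=\langle\nabla^{*}\nabla s,t\rangle+\langle s,\nabla^{*}\nabla t\rangle-2\langle\nabla s,\nabla t\rangle$ with the Weitzenb\"ock decomposition $\nabla^{*}\nabla=\Delta_{H}+\mathcal{W}$ on forms, where $\Delta_{H}=dd^{*}+d^{*}d$ and $\mathcal{W}:=e(\omega^{i})e^{*}(\omega^{j})R_{ij}$, with the sign normalized to agree with the $p=2$ case of \eqref{bochf}. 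Applying this to $|h|^{p}=\langle h,\phi\rangle$, using $\Delta_{H}h=dd^{*}h$ (since $dh=0$) and $\Delta_{H}\phi=d^{*}d\phi$ (since $d^{*}\phi=0$), and merging the two curvature terms (each equal to $\tfrac12|h|^{p-2}\langle h,\mathcal{W}h\rangle$, because $\phi=|h|^{p-2}h$), produces
\[
\Delta\tfrac12|h|^{p}=\tfrac12\langle dd^{*}h,\phi\rangle+\tfrac12\langle h,d^{*}d\phi\rangle+|h|^{p-2}\langle h,\mathcal{W}h\rangle-\langle\nabla h,\nabla\phi\rangle .
\]

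Now the two Hodge-Laplacian terms collapse to pure divergences. From the pointwise adjointness identity $\langle d\alpha,\beta\rangle-\langle\alpha,d^{*}\beta\rangle=div\big(\sum_{i}\langle\alpha,e^{*}(\omega^{i})\beta\rangle e_{i}\big)$ and $d^{*}\phi=0$ one gets $\langle dd^{*}h,\phi\rangle=\langle d^{*}h,d^{*}\phi\rangle+div\,V_{1}=div\,V_{1}$ with $V_{1}=\sum_{i}\langle d^{*}h,e^{*}(\omega^{i})\phi\rangle e_{i}$; dually, $dh=0$ gives $\langle h,d^{*}d\phi\rangle=\langle dh,d\phi\rangle+div\,V_{2}=div\,V_{2}$ with $V_{2}=-\sum_{i}\langle h,e^{*}(\omega^{i})d\phi\rangle e_{i}$. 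A one-line Leibniz expansion, using $\langle\nabla_{e_{i}}h,h\rangle=|h|\,e_{i}|h|$, gives $\langle\nabla h,\nabla\phi\rangle=|h|^{p-2}\big(|\nabla h|^{2}+(p-2)|d|h||^{2}\big)$. Substituting these and writing $(p-2)=-(2-p)$ puts the identity into the shape
\[
\Delta\tfrac12|h|^{p}=-|h|^{p-2}\big(|\nabla h|^{2}-(2-p)|d|h||^{2}-\langle h,\mathcal{W}h\rangle\big)+\tfrac12\,div(V_{1}+V_{2}) ,
\]
so it remains only to identify $V_{1}+V_{2}$ with $\mathbf{X}$, the vector field inside $div$ in \eqref{bochf}.

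Plugging the formula for $d^{*}h$ into $V_{1}$, using $e^{*}(\omega^{i})\phi=|h|^{p-2}e^{*}(\omega^{i})h$, extracting a factor $|h|^{2}$ to replace $h$ by $h/|h|$, and converting $|h|^{p-1}e_{k}|h|=\tfrac1p\,e_{k}|h|^{p}$, gives $V_{1}=\tfrac{p-2}{p}\sum_{j,k}\langle e^{*}(\omega^{k})\tfrac{h}{|h|},e^{*}(\omega^{j})\tfrac{h}{|h|}\rangle(e_{k}|h|^{p})e_{j}$; the same manipulation on $V_{2}$, together with $\langle h,e^{*}(\omega^{i})e(\omega^{j})h\rangle=\langle e(\omega^{i})h,e(\omega^{j})h\rangle$, gives $V_{2}=-\tfrac{p-2}{p}\sum_{j,k}\langle e(\omega^{j})\tfrac{h}{|h|},e(\omega^{k})\tfrac{h}{|h|}\rangle(e_{k}|h|^{p})e_{j}$, so $V_{1}+V_{2}=\tfrac{p-2}{p}\mathbf{X}$ and $\tfrac12\,div(V_{1}+V_{2})=-\tfrac{2-p}{2p}\,div(\mathbf{X})$, which is exactly the last term of \eqref{bochf}. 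Conceptually the whole derivation rests on the first two steps: once one works with the conjugate form $\phi$, the two Hodge-Laplacian contributions are automatically divergences, the curvature term collapses by $C^{\infty}$-linearity, and the cross term is a routine Leibniz expansion, with the nonlinearity surviving only through the derivative identity for $d^{*}h$. The step I expect to be the main obstacle is not deep but genuinely fiddly: tracking the Clifford-type contractions $e(\omega^{i}),e^{*}(\omega^{j})$ through $V_{1}$ and $V_{2}$ so as to land on exactly the combination $\langle e^{*}\cdot,e^{*}\cdot\rangle-\langle e\cdot,e\cdot\rangle$ with exactly the coefficient $\tfrac{2-p}{2p}$; the one genuinely analytic difficulty, extending \eqref{bochf} across $\{h=0\}$, I would defer to \cite{Uhl77} as the excerpt itself does.
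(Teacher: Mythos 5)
Your proposal is correct and takes essentially the same route as the paper: both compute $\Delta\langle h,|h|^{p-2}h\rangle$ via the product rule for $\nabla^*\nabla$, apply the Weitzenb\"ock identity to $h$ (with $dh=0$) and to $\phi=|h|^{p-2}h$ (with $d^*\phi=0$), and convert the two resulting Hodge-Laplacian pairings into divergences. Naming the conjugate form $\phi$ and packaging the divergence step as the pointwise adjointness identity is a cleaner presentation, but the underlying computation — including the identification $V_1+V_2=\tfrac{p-2}{p}\mathbf{X}$ — is exactly what the paper does in compressed form.
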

\begin{proof}
Compute in an orthonormal frame: 
\begin{align}
&-\Delta \langle h,|h|^{p-2}h\rangle = 2\langle \nabla_jh,\nabla_j|h|^{p-2}h\rangle - \langle \nabla^*\nabla h,|h|^{p-2}h\rangle - \langle h,\nabla^*\nabla |h|^{p-2}h\rangle\nonumber\\
  = &2|h|^{p-2}|\nabla h|^2- 2(2-p) |h|^{p-2}|d|h||^2 - \langle (dd^*+e(\omega^i)e^*(\omega^j)R_{ij}) h,|h|^{p-2}h\rangle\nonumber\\
& - \langle h,(d^*d +e(\omega^i)e^*(\omega^j)R_{ij}) |h|^{p-2}h\rangle\nonumber\\
  = &2|h|^{p-2}|\nabla h|^2- 2(2-p) |h|^{p-2}|d|h||^2 +\frac{2-p}{p }div(\langle  e^*(\omega^k)   \frac{h}{|h|}, e^*(\omega^j)\frac{h}{|h|}\rangle e_k(|h|^{p })e_j) \nonumber\\
&
-\frac{2-p}{p }div(\langle e(\omega^j)\frac{h}{|h|}, e(\omega^k) \frac{h}{|h|}\rangle e_k(|h|^{p })e_j)-2\langle h, e(\omega^i)e^*(\omega^j)R_{ij}  |h|^{p-2}h\rangle .
\end{align}
Hence $|h|^p$ satisfies the claimed second order equation.
\end{proof}
Integrating the $p$-harmonic Bochner formula over $M$ and applying the divergence theorem to eliminate the Laplacian term and the nonlinear divergence term  yields the same corollaries as the usual $2-$ harmonic formula. 
Define the curvature operator 
$$\mathcal{R}:=e(\omega^j)e^*(\omega^k)R_{kj}.$$
\begin{corollary}
If  $\mathcal{R} $ is positive on $k$-forms, then $H_{p,red}^k(M)=0$, {\em for all} $ p\in(1,\infty).$ If $\mathcal{R} $  is nonnegative on $k$-forms, then any $p$-harmonic $k$-form must be covariant constant, and hence zero if the volume is infinite. 
\end{corollary}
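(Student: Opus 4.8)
The plan is to use the Bochner formula \eqref{bochf} together with integration by parts, exactly as one does in the $L^2$ case, being careful only about the nonlinear divergence term and about the regularity/decay needed to justify the integration. First I would fix a $p$-harmonic $k$-form $h$ and note by elliptic regularity (cited from \cite{Uhl77}) that $h$ is smooth where it is nonzero and H\"older continuous everywhere, while $|h|^p$ satisfies \eqref{bochf} distributionally on all of $M$. On the open set $\{h\neq 0\}$ all terms in \eqref{bochf} are classical, and on the zero set $|h|^p$ vanishes to high enough order that the identity still holds in the distributional sense; this is the standard ``removable singularity'' point one inherits from \cite{Uhl77} and I would simply invoke it.

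Next I would integrate \eqref{bochf} against a sequence of Lipschitz cutoff functions $\chi_j$ with $\chi_j\equiv 1$ on $B_j(x_0)$, $|\nabla\chi_j|\le 2/j$, and transfer the derivatives off $\Delta\tfrac12|h|^p$ and off the nonlinear divergence term. The left-hand side produces $\int_M \langle \nabla(\tfrac12|h|^p),\nabla\chi_j\rangle$, which is $O(j^{-1}\|\,|h|^{p-1}\nabla|h|\,\|_{L^1})$; since $h\in L_p$ and (by the interior estimates in \cite{Uhl77}, or by the Caccioppoli inequality that the $p$-harmonic equation supplies) $|h|^{p-2}|\nabla h|^2$ is locally integrable with the global bound $\int_M |h|^{p-2}|\nabla h|^2<\infty$ forced by the formula itself, the boundary contributions and the cutoff error terms tend to $0$ as $j\to\infty$. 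The nonlinear divergence term is handled the same way: its integrand is controlled pointwise by $|h|^{p-1}|\nabla|h||$, i.e. by the same $L^1$ quantity, so it too contributes only a vanishing cutoff error. In the limit one is left with
\begin{align*}
0 = \int_M |h|^{p-2}\Bigl(|\nabla h|^2 - (2-p)\,|d|h||^2 + \langle h,\mathcal{R}h\rangle\Bigr)\,dv .
\end{align*}
Kato's inequality gives $|d|h||^2\le |\nabla h|^2$ pointwise, so for $p\ge 2$ the term $-(2-p)|d|h||^2$ is $\ge 0$, and for $1<p<2$ we instead write $|\nabla h|^2-(2-p)|d|h||^2 \ge |\nabla h|^2 - (2-p)|\nabla h|^2 = (p-1)|\nabla h|^2\ge 0$; in both regimes the gradient terms together are nonnegative, in fact bounded below by $\min(1,p-1)|\nabla h|^2$.

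Now the conclusions follow. If $\mathcal{R}$ is positive on $k$-forms, then wherever $h\neq 0$ the integrand $|h|^{p-2}\langle h,\mathcal{R}h\rangle>0$ while the rest is $\ge 0$, forcing the whole integral to be $>0$ unless $h\equiv 0$; hence $\mathcal{H}_p^k(M)=0$, and by the nonlinear Hodge theorem (the Corollary identifying $H_{p,red}^k(M)\cong\mathcal{H}_p^k(M)$) we get $H_{p,red}^k(M)=0$ for every $p\in(1,\infty)$. If $\mathcal{R}$ is merely nonnegative, the identity forces both $\int_M|h|^{p-2}|\nabla h|^2=0$ and $\int_M|h|^{p-2}\langle h,\mathcal{R}h\rangle=0$; the first gives $\nabla h=0$ on $\{h\neq 0\}$, so $|h|$ is locally constant and nonzero there, which (combined with continuity of $h$) makes $\{h\neq 0\}$ open and closed, hence either empty or all of $M$, and in the latter case $h$ is globally parallel of constant length, so $\|h\|_{L_p}^p=|h(x_0)|^p\,\vol(M)$, which is finite only if either $h=0$ or $\vol(M)<\infty$. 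I expect the main obstacle to be purely technical: justifying the integration by parts against cutoffs at the level of regularity $h$ actually has — i.e. checking that $|h|^{p-2}|\nabla h|^2\in L^1(M)$ globally and that the nonlinear divergence term genuinely integrates to zero, rather than picking up a contribution concentrated on the zero set of $h$. This is exactly the delicate point underlying the boundedness half of \cite[Main Theorem]{Uhl77}, and I would lean on that reference (and the Caccioppoli estimate implicit in the $p$-harmonic system) rather than reproving it.
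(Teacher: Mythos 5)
Your argument is correct and is exactly the one the paper intends: integrate the Bochner formula \eqref{bochf}, kill the Laplacian term and the nonlinear divergence term by the divergence theorem (via cutoffs on the noncompact manifold), and observe that the remaining gradient terms are nonnegative by Kato's inequality in both regimes $p\geq 2$ and $1<p<2$. The paper condenses all of this to the single sentence preceding the corollary, so you have supplied the details it leaves implicit — in particular you correctly match the sign of $\langle h,\mathcal{R}h\rangle$ against the definition $\mathcal{R}=e(\omega^j)e^*(\omega^k)R_{kj}$ — and you appropriately flag, rather than hand-wave past, the justification of integration by parts at the regularity level $h$ actually has.
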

\begin{corollary}
If $M$ is complete and Ricci flat with infinite volume, $H_{p,red}^1(M)=0$, {\em for all} $ p\in(1,\infty).$ 
\end{corollary}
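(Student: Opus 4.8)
The plan is to invoke the Corollary $H_{p,red}^k(M)\iso\mathcal{H}_p^k(M)$, so it suffices to show every $p$-harmonic $1$-form $h$ on $M$ vanishes. On $1$-forms the Weitzenb\"ock curvature operator $\mathcal{R}=e(\omega^j)e^*(\omega^k)R_{kj}$ appearing in the Bochner formula \eqref{bochf} is the Ricci operator, which vanishes since $M$ is Ricci flat; so, distributionally on $M$ (using the regularity theory of \cite{Uhl77}: $h\in W^{1,p}_{loc}$, $|h|$ is continuous, $h$ is smooth where $h\neq0$, and \eqref{bochf} holds),
\begin{align}\label{planbo}
\Delta\tfrac12|h|^p=-|h|^{p-2}\bigl(|\nabla h|^2-(2-p)|d|h||^2\bigr)-\tfrac{2-p}{2p}\,\mathrm{div}(V),
\end{align}
with $V$ the vector field of \eqref{bochf}. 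By the Kato inequality $|d|h||=|\nabla|h||\le|\nabla h|$, hence $|h|^{p-2}\bigl(|\nabla h|^2-(2-p)|d|h||^2\bigr)\ge c(p)\,|h|^{p-2}|\nabla h|^2$ with $c(p):=\min(1,p-1)>0$; reading off the coefficients of $V$ for a $1$-form also gives $|V|\le C_n\,|h|^{p-1}|\nabla|h||\le C_n\,|h|^{p-1}|\nabla h|$ (each side vanishing across $\{h=0\}$ as $p>1$).

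Next I would run a Caccioppoli-type cutoff argument. Fix $x_0\in M$, $R>0$, and choose Lipschitz $\chi$ with $0\le\chi\le1$, $\chi\equiv1$ on $B_R(x_0)$, $\mathrm{supp}\,\chi\subset B_{2R}(x_0)$, $|\nabla\chi|\le 2/R$. Multiplying \eqref{planbo} by $\chi^2$, integrating over $M$, and integrating by parts in the Laplacian and divergence terms (legitimate because $|h|^p\in W^{1,1}_{loc}$ with $\nabla|h|^p=p|h|^{p-1}\nabla|h|$ and $V\in L^1_{loc}$, so that, once integrated against $\chi^2$, the identity \eqref{planbo} also forces $|h|^{p-2}|\nabla h|^2\in L^1_{loc}$), and writing $I:=\int_M\chi^2|h|^{p-2}|\nabla h|^2$, one gets
\begin{align*}
c(p)\,I&\le\int_M\chi^2|h|^{p-2}\bigl(|\nabla h|^2-(2-p)|d|h||^2\bigr)=-\int_M 2\chi\langle\nabla\chi,\nabla\tfrac12|h|^p\rangle+\tfrac{2-p}{2p}\int_M 2\chi\langle\nabla\chi,V\rangle\\
&\le C_{n,p}\int_M\chi|\nabla\chi|\,|h|^{p-1}|\nabla h|\le C_{n,p}\,I^{1/2}\Bigl(\int_M|\nabla\chi|^2|h|^p\Bigr)^{1/2},
\end{align*}
the last step being Cauchy--Schwarz for the splitting $\chi|\nabla\chi||h|^{p-1}|\nabla h|=\bigl(\chi|h|^{(p-2)/2}|\nabla h|\bigr)\bigl(|\nabla\chi||h|^{p/2}\bigr)$. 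Hence $I\le c(p)^{-2}C_{n,p}^2\int_{B_{2R}(x_0)}|\nabla\chi|^2|h|^p\le 4c(p)^{-2}C_{n,p}^2R^{-2}\|h\|_{L_p(M)}^p$, so, as $\chi\equiv1$ on $B_R(x_0)$,
\[\int_{B_R(x_0)}|h|^{p-2}|\nabla h|^2\le\frac{C'_{n,p}\|h\|_{L_p(M)}^p}{R^2}\xrightarrow[R\to\infty]{}0.\]

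By completeness the balls $B_R(x_0)$ exhaust $M$, so $|h|^{p-2}|\nabla h|^2\equiv0$ on $M$; thus on the open set $\{h\neq0\}$ (where $h$ is smooth) we have $\nabla h=0$, so $h$ is parallel there and $|h|$ is constant on each component. If $\{h\neq0\}$ were a nonempty proper subset of the connected manifold $M$, a boundary point $y$ of one of its components $U$ would give the contradiction $0=|h|(y)=\lim_{U\ni x\to y}|h|(x)>0$ (by continuity of $|h|$ and its constancy on $U$). Hence either $h\equiv0$ or $\{h\neq0\}=M$; in the latter case $|h|$ is a positive constant and $\int_M|h|^p=|h|^p\,\mathrm{vol}(M)=\infty$ contradicts $h\in L_p(M)$. (For disconnected $M$, argue componentwise.) Therefore $h\equiv0$, so $\mathcal{H}_p^1(M)=0$ and $H_{p,red}^1(M)\iso\mathcal{H}_p^1(M)=0$.

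The one genuinely delicate ingredient is the regularity input at zeros of $h$: for $p\neq2$ the factor $|h|^{p-2}$ is singular there (when $p<2$) and $h$ need not be $C^1$, so the distributional validity of \eqref{planbo} and the integrations by parts must be underwritten by \cite{Uhl77} (H\"older continuity of $h$, smoothness off its zero set, and the integrated Bochner identity). Everything else is routine: the Kato inequality fixes the sign, the curvature term drops by Ricci flatness, and the Caccioppoli estimate closes precisely because the cutoff enters quadratically so its gradient costs only $O(R^{-2})$.
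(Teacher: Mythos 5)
Your proof is correct and follows essentially the same route as the paper: integrate the Bochner formula of Proposition \ref{bochner}, note that $\mathcal{R}$ on $1$-forms is the Ricci operator and hence vanishes, use the Kato inequality to get a positive multiple of $|h|^{p-2}|\nabla h|^2$, conclude $h$ is parallel, and rule out a nonzero parallel form by infinite volume. The only difference is that you carry out the Caccioppoli cutoff justifying the divergence-theorem step on a noncompact manifold, which the paper compresses into the sentence preceding the corollary.
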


\begin{corollary}\label{bdd}
If $M^n$ has bounded sectional curvature and positive injectivity radius, there exists $C_M>0$ such that $\|h\|_{L^{\infty}(M)}\leq C_M\|h\|_{L^{p}(M)}$, for every $h\in \mathcal{H}^k_p(M)$,  $1<p<\infty$, $0\leq k\leq n$.  
\end{corollary}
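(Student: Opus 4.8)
The plan is to derive from the Bochner formula that $v:=|h|^p$ is a nonnegative weak subsolution of a divergence‑form uniformly elliptic equation with bounded coefficients, and then to run a De Giorgi–Nash–Moser local maximum principle, using the bounded geometry of $M$ to make its constant uniform in the base point. Concretely, I would first rewrite Proposition~\ref{bochner} as follows. By Kato's inequality $|d|h||\le|\nabla h|$, the term $-|h|^{p-2}\bigl(|\nabla h|^2-(2-p)|d|h||^2\bigr)$ is $\le 0$ for every $p\in(1,\infty)$ (the cases $p\le 2$ and $p\ge 2$ checked separately), and the curvature term satisfies $|h|^{p-2}\langle h,\mathcal R h\rangle\le K v$ with $K$ depending only on $n$ and the bound on $\sec_M$. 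The nonlinear divergence term is $\tfrac{2-p}{2p}\operatorname{div}(M\nabla v)$ with $M^{jk}=\langle e^*(\omega^k)\tfrac{h}{|h|},e^*(\omega^j)\tfrac{h}{|h|}\rangle-\langle e(\omega^j)\tfrac{h}{|h|},e(\omega^k)\tfrac{h}{|h|}\rangle$, and the Clifford identity $|e(\alpha)w|^2+|e^*(\alpha)w|^2=|\alpha|^2|w|^2$ yields $-\mathrm{Id}\le M\le\mathrm{Id}$. Moving this term to the left, $v$ satisfies, weakly,
\[
\operatorname{div}\bigl(\hat B\,\nabla v\bigr)\ge -K v,\qquad \hat B:=\tfrac12\mathrm{Id}-\tfrac{2-p}{2p}M ,
\]
where $\hat B$ is symmetric with bounded measurable entries and uniformly elliptic ($\hat B\ge \tfrac{\min(p-1,1)}{p}\mathrm{Id}>0$, since the perturbation of $\tfrac12\mathrm{Id}$ has operator norm $<\tfrac12$ for $p>1$).

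Next I would apply the local maximum principle for nonnegative subsolutions of such an equation: on a geodesic ball $B_r(x)$ with $r\le r_0$ one has $\sup_{B_{r/2}(x)}v\le C\,r^{-n}\int_{B_r(x)}v\,dv$. The hypotheses of the corollary enter exactly at the point of making $C$ independent of $x$: fix $r_0<\inj_M$ (and $r_0<\pi/\sqrt{\kappa}$ if $\sec_M$ can be positive, $\kappa:=\sup|\sec_M|$); by Rauch comparison the exponential chart identifies each $B_{r_0}(x)$ with the Euclidean ball $B_{r_0}\subset\IR^n$ under a bi‑Lipschitz map whose distortion is bounded in terms of $\kappa$ and $r_0$ only, so a Sobolev inequality holds on $B_{r_0}(x)$ with a constant depending only on $n,\kappa,r_0$ (with the usual modification when $n=2$). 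Moser iteration then gives $\sup_{B_{r_0/2}(x)}v\le C\,r_0^{-n}\int_{B_{r_0}(x)}v\,dv$ with $C=C(n,\kappa,r_0,p)$ independent of $x$, and independent of $k$ because the bound $K$ controls $\mathcal R$ in every degree. Using $\int_{B_{r_0}(x)}|h|^p\le\|h\|_{L^p(M)}^p$ gives $|h(x)|^p\le C\,r_0^{-n}\|h\|_{L^p(M)}^p$ for every $x\in M$; taking $p$‑th roots and the supremum over $x$ yields the asserted estimate $\|h\|_{L^\infty(M)}\le (C\,r_0^{-n})^{1/p}\|h\|_{L^p(M)}$.

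The hard part is that the Bochner identity of Proposition~\ref{bochner} is only a pointwise statement on the open set $\{h\ne0\}$, whereas at its zeros $h$ is merely Hölder continuous (Uhlenbeck), so the subsolution inequality for $v=|h|^p$ must be justified in the distributional sense across the zero set. The way I would handle this is to rerun the Bochner computation for the regularizations $v_\varepsilon:=(|h|^2+\varepsilon)^{p/2}$, which lie in $W^{1,2}_{\mathrm{loc}}$ (using that $h$ is locally bounded and $\nabla h\in L^2_{\mathrm{loc}}$), obtaining for each $\varepsilon>0$ a subsolution inequality $\operatorname{div}(\hat B_\varepsilon\nabla v_\varepsilon)\ge -K v_\varepsilon$ with error terms and coefficients bounded uniformly in $\varepsilon$, running the Moser iteration of the previous paragraph uniformly in $\varepsilon$, and letting $\varepsilon\to0$. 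This is precisely the technical core of Uhlenbeck's proof that $p$‑harmonic forms are locally bounded \cite[Theorem~1.10 and its proof]{Uhl77}; I would cite it rather than reproduce it, since modulo this step the argument above is routine.
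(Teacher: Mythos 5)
Your proposal is correct and follows the same route the paper takes: read the Bochner identity of Proposition \ref{bochner} as saying that $v=|h|^p$ is a nonnegative weak subsolution of a uniformly elliptic divergence-form equation with bounded coefficients, invoke the bounded-geometry hypotheses to get a Moser iteration constant independent of the base point, and conclude $\|h\|_{L^\infty}\lesssim (\inj_M)^{-n/p}\|h\|_{L_p}$. The paper's proof is essentially a pointer to \cite[Proposition 2.2]{LS18} and \cite{Uhl77}; you have simply filled in the ellipticity and curvature bounds explicitly (your computation $\hat B\ge \tfrac{\min(p-1,1)}{p}\mathrm{Id}$ and the $-\mathrm{Id}\le M\le\mathrm{Id}$ bound via the Clifford identity are both correct) and flagged the regularization across the zero set of $h$, which the paper leaves implicit by citing Uhlenbeck.
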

\begin{proof}
The Bochner formula \eqref{bochf} shows that $|h|^p$ is a subsolution to an elliptic equation of divergence form with uniformly bounded coefficients. The bounded sectional curvature and positive injectivity radius then allows one to use Moser iteration  to control the sup norm of $|h|^p$ in terms of its average value on geodesic balls of radius less than or equal to $\inj_M$. (See, for example, \cite[Proposition 2.2]{LS18} for the analogous $p=2$ estimate.) This in turn implies a bound for $\|h\|_{L^\infty(M)}$ in terms of $(\inj_M)^{-\frac{n}{p}}\|h\|_{L_p}.$ 
\end{proof}

\section{Monotonicity estimates for $p$-harmonic forms} \label{monoto}
The monotonicity estimates for $p$-harmonic forms can be derived exactly as the $L^2$ case treated in \cite{DS22}. We repeat the computation here for the convenience of the reader. 

Let $h\in \mathcal{H}_{p}^k(M).$
Let $B_R$ be a geodesic ball in $M$ with boundary $S_R$. Let $r$ denote the radial function and $\p_r$ denote the radial vector field in $B_R$. 
Let $\{\lambda_a\}_a$ denote the eigenvalues of $\Hess(r)$. Choose an orthonormal eigenframe $\{e_a\}_a$ with dual coframe $\{\omega^a\}_a$, so that 
$ e(\omega^i)e^*(\omega^j) \Hess(r)_{ji} =  \sum_a\lambda_ae(\omega^a)e^*(\omega^a).$  

Define the functions $w_p(h,r)$ and $\mu_p(h,r)$ by 
\begin{align}\label{defw} w_p(h,r):= \frac{\int_{S_r} |h|^{p}  \sum_a \lambda_a(\frac{1}{p}-|e^*(\omega^a) \frac{h}{|h|} |^2)d\sigma}{\int_{S_r} |h|^{p} d\sigma},
\end{align}
and 
\begin{align}\label{defmu} \mu_p(h,r):= \frac{\int_{S_r} |h|^{p}  |e^*(dr) \frac{h}{|h|} |^2d\sigma}{\int_{S_r} |h|^{p} d\sigma}.
\end{align}
\begin{proposition}\label{pricey}
With $w_p,\mu_p$, and $h$ as defined above, 
\begin{align}\label{int0}
& \int_{B_\sigma}w_p(h,r)|h|^pdv = e^{-\int_\sigma^\tau\frac{pw_p(h,s)}{1-p\mu_p(h,s)}\frac{ds}{s}}\int_{B_\tau}w_p(h,r)|h|^pdv.
\end{align}
\end{proposition}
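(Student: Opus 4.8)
The plan is to derive \eqref{int0} as the integrated form of a first-order differential inequality (in fact identity) for the quantity $F(\tau):=\int_{B_\tau}w_p(h,r)|h|^p\,dv$, by combining the Bochner formula of Proposition~\ref{bochner} with integration over geodesic balls and a Rellich-type boundary identity. First I would take the Bochner formula \eqref{bochf}, multiply by a suitable radial cutoff (or simply integrate over $B_\tau$), and apply the divergence theorem. The Laplacian term $\Delta\frac12|h|^p$ contributes a boundary term $\int_{S_\tau}\partial_r\frac12|h|^p\,d\sigma$, and the nonlinear divergence term likewise contributes only a boundary integral over $S_\tau$; the remaining bulk terms are $-\int_{B_\tau}|h|^{p-2}(|\nabla h|^2-(2-p)|d|h||^2-\langle h,\mathcal R h\rangle)\,dv$. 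The key point is to recognize that this whole bulk integrand, after using that $h$ is $p$-harmonic (so $dh=0$ and $d^*(|h|^{p-2}h)=0$), is exactly $-\operatorname{div}$ of the radial current whose sphere integral produces $w_p$; this is the ``refined Kato / Bochner as exact divergence'' phenomenon already exploited in \cite{DS22}. In other words the left side of \eqref{int0}, differentiated in $\tau$, is expressible purely through $w_p(h,\tau)$ and $\mu_p(h,\tau)$ weighted by $\int_{S_\tau}|h|^p\,d\sigma$.

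The second ingredient is a comparison between the radial derivative boundary term and the sphere integral $\int_{S_\tau}|h|^p\,d\sigma$. Here I would use that $h$ is closed and $p$-coclosed to decompose $\nabla_{\partial_r} h$ into tangential and normal parts via the first-order system, and use $\operatorname{Hess}(r)$ to relate $\partial_r\int_{S_\tau}|h|^p\,d\sigma$ to $\int_{S_\tau}|h|^p(\text{trace terms})\,d\sigma$; the terms involving $|e^*(dr)\frac{h}{|h|}|^2$ are precisely what the definition \eqref{defmu} of $\mu_p$ packages. Carrying this out should yield a logarithmic derivative identity of the form
\begin{align*}
\frac{d}{d\tau}\log F(\tau)=\frac{p\,w_p(h,\tau)}{\big(1-p\mu_p(h,\tau)\big)\tau},
\end{align*}
valid wherever $F(\tau)\neq 0$ and $1-p\mu_p(h,\tau)\neq 0$. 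Integrating this from $\sigma$ to $\tau$ and exponentiating gives exactly \eqref{int0}. One must also check $F$ does not vanish on the relevant interval and handle the set where $h=0$ (where $|h|^{p-2}h$ is only H\"older); the standard device, as in \cite{DS22}, is to work on $\{|h|>\epsilon\}$ or to regularize $|h|^p$ by $(|h|^2+\epsilon^2)^{p/2}$, derive the identity, and let $\epsilon\to0$, using the Bochner formula in the weak (distributional) sense guaranteed by Proposition~\ref{bochner}.

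The main obstacle I expect is the algebraic identification that the Bochner bulk term equals the divergence of the radial current built from $w_p$: this requires carefully using both $dh=0$ and $d^*(|h|^{p-2}h)=0$ to rewrite $|\nabla h|^2-(2-p)|d|h||^2-\langle h,\mathcal R h\rangle$ in a form that, against the radial foliation, telescopes into $\partial_r$ of sphere integrals plus the $w_p$ weight. In the $p=2$ case of \cite{DS22} this is the heart of the argument, and the $(2-p)$-corrections (both the explicit $|d|h||^2$ term and the extra nonlinear divergence term in \eqref{bochf}) must be tracked so that they reassemble into the $\frac1p-|e^*(\omega^a)\frac{h}{|h|}|^2$ combination appearing in \eqref{defw} and the $1-p\mu_p$ denominator. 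The regularity bookkeeping at the zero set of $h$ is a secondary but genuine technical point; everything else (the divergence theorem, exponentiating an ODE) is routine.
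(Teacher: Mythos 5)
Your proposal starts from the wrong formula and, as a result, never has a chance of producing the identity \eqref{int0} as stated. The paper's proof of Proposition~\ref{pricey} does not use the Bochner formula at all. It is a direct Rellich--Pohozaev-type argument: pair the Cartan (magic) formula $L_{\partial_r}h = d\,i_{\partial_r}h + i_{\partial_r}\,dh$ against the $p$-dual form $|h|^{p-2}h$ and integrate over $B_R$. On the one hand, since $dh=0$ and $d^*(|h|^{p-2}h)=0$, Stokes' theorem turns $\int_{B_R}\langle d\,i_{\partial_r}h,|h|^{p-2}h\rangle\,dv$ into the pure boundary term $\int_{S_R}|h|^{p-2}|e^*(dr)h|^2\,d\sigma$, which is $\mu_p(h,R)\int_{S_R}|h|^p\,d\sigma$. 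On the other hand, the identity $L_{\partial_r}=\nabla_{\partial_r}+e(\omega^i)e^*(\omega^j)\Hess(r)_{ji}$, together with $\langle\nabla_{\partial_r}h,|h|^{p-2}h\rangle=\tfrac1p\partial_r|h|^p$ and an integration by parts in the radial coordinate (producing $\tfrac1p\int_{S_R}|h|^p$ and the $\Delta r=\mathrm{tr}\,\Hess(r)$ term), yields exactly $-\int_{B_R}w_p(h,r)|h|^p\,dv + \tfrac1p\int_{S_R}|h|^p\,d\sigma$. Equating the two gives \eqref{nabla2}, the ODE \eqref{ode} for $Y(t)=\int_{B_t}w_p|h|^p\,dv$, and hence \eqref{int0} by integration.

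The step in your plan that cannot work is the claim that the Bochner bulk term $|h|^{p-2}\bigl(|\nabla h|^2-(2-p)|d|h||^2-\langle h,\mathcal R h\rangle\bigr)$ is the divergence of a radial current whose sphere integral produces $w_p$. This is not the case: $w_p$ and $\mu_p$ are built entirely from the Hessian of the distance function (the second fundamental form of the geodesic spheres), whereas the Bochner bulk term involves the intrinsic curvature operator $\mathcal R$ and the full covariant derivative $|\nabla h|^2$. Those are different tensors (related only indirectly through the Riccati equation along geodesics), and no algebraic rearrangement will convert one into the other. In \cite{DS22} as well, the monotonicity/Price inequality is derived via the Lie-derivative pairing, not as a rearranged Bochner identity; the Bochner formula in this paper plays an independent role (e.g.\ Corollary~\ref{bdd}). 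So you should drop Proposition~\ref{bochner} from the argument and instead pair $L_{\partial_r}h$ against $|h|^{p-2}h$, using $dh=0$ and $d^*(|h|^{p-2}h)=0$ to kill all bulk terms except those involving $\Hess(r)$. Your concern about regularity on $\{h=0\}$ is legitimate but secondary; the paper handles it implicitly by elliptic regularity off the zero set and by the fact that all integrands are continuous (indeed, $|h|^{p-2}h$ is H\"older and the pairing $\langle L_{\partial_r}h,|h|^{p-2}h\rangle$ is integrable), so Stokes applies without extra regularization.
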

\begin{proof} 

 We use the fact that on a Riemannian manifold, we can express the Lie derivative acting on forms in terms of the covariant derivative:
\begin{align}\label{Lie}
L_{\p_r}= di_{\p_r}+i_{\p_r}d = \nabla_{\p_r} + e(\omega^i)e^*(\omega^j)\Hess(r)_{ji}.
\end{align}
By Stoke's theorem we have 
\begin{align}\label{stokes}
\int_{B_R}\langle L_{\p_r}h,|h|^{p-2}h\rangle dv = \int_{S_R}|h|^{p-2}|e^*(dr)h|^2d\sigma, 
\end{align}
and by \eqref{Lie}, 
\begin{align}\label{nabla}
&\int_{B_R}\langle L_{\p_r}h,|h|^{p-2}h\rangle dv =  \int_{B_R}\langle (\nabla_{\p_r} + e(\omega^i)e^*(\omega^j)\Hess(r)_{ji})h,|h|^{p-2}h\rangle dv \nonumber\\
&=  \int_{B_R}(  \frac{1}{p}\frac{\p|h|^p}{\p r} + \langle e(\omega^i)e^*(\omega^j)\Hess(r)_{ji})h,|h|^{p-2}h\rangle) dv\nonumber\\
&=  \int_{B_R} (  \frac{1}{p}|h|^p \Delta r + \langle e(\omega^i)e^*(\omega^j)\Hess(r)_{ji})h,|h|^{p-2}h\rangle )dv
+ \frac{1}{p} \int_{S_R}|h|^{p} d\sigma.
\end{align}
Combining \eqref{stokes} and \eqref{nabla} yields 
\begin{align}\label{nabla2}
& \int_{S_R}(\frac{1}{p}-\mu_p(h,R))|h|^{p}  d\sigma =  \int_{B_R} w_p(h,r)|h|^p    dv.
\end{align}
Setting $Y(t):= \int_{B_t}w_p(h,r)|h|^pdv,$ we rewrite \eqref{nabla} as 
\begin{align}\label{ode}
&  (\frac{1}{p}-\mu_p(h,R))Y' =  w_p(h,R)Y(R).
\end{align}
Integrate this expression to obtain 
\begin{align}\label{int}
& \int_{B_\sigma}w_p(h,r)|h|^pdv = e^{-\int_\sigma^\tau\frac{pw_p(h,s)}{1-p\mu_p(h,s)}\frac{ds}{s}}\int_{B_\tau}w_p(h,r)|h|^pdv,
\end{align}
as claimed. 
\end{proof}
\begin{proposition}\label{incond}
Let $h$ be a $p$-harmonic $k$-form. Then with $w_p$ and $\mu_p$ defined by \eqref{defw} and \eqref{defmu}, we have 
\begin{align}\label{wlim}\lim_{r\to 0}rw_p(h,r) =   \frac{n-1-pk+p\lim_{r\to 0}\mu_p(h,r)}{p }.
\end{align}
If $h(o)\not = 0$, then 
\begin{align}\label{mulim}\lim_{r\to 0}\mu_p(h,r) =   \frac{k}{n}, 
\end{align}
and
\begin{align}\label{muwlim}\lim_{r\to 0}rw_p(h,r) =   (n-1)(\frac{1}{p }-\frac{k}{n}).
\end{align}
\end{proposition}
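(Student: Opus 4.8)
The plan is to compute the three limits by analyzing the local behavior of a $p$-harmonic form near the center $o$ of the geodesic ball, using normal coordinates and the known small-radius asymptotics of the distance function. First I would recall that in geodesic normal coordinates centered at $o$, the Hessian of $r$ satisfies $\Hess(r) = \frac{1}{r}(g - dr\otimes dr) + O(r)$ as $r\to 0$, so each eigenvalue $\lambda_a$ associated to a direction tangent to the sphere $S_r$ satisfies $\lambda_a = \frac{1}{r} + O(r)$, while the eigenvalue in the radial direction $\p_r$ vanishes. Thus $r\sum_a\lambda_a(\frac{1}{p} - |e^*(\omega^a)\frac{h}{|h|}|^2)$ converges, as $r\to 0$, to $\sum_{a\perp\p_r}(\frac{1}{p} - |e^*(\omega^a)\frac{h}{|h|}|^2)$ evaluated on the sphere, where the sum runs over the $n-1$ sphere-tangent directions. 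Since $\sum_{a=1}^n |e^*(\omega^a)u|^2 = k$ for any unit $k$-form $u$ (this is the pointwise identity $e^*(\omega^a)$ summed over an orthonormal coframe computing the ``degree''), we get $\sum_{a\perp\p_r}|e^*(\omega^a)\frac{h}{|h|}|^2 = k - |e^*(dr)\frac{h}{|h|}|^2$. Integrating over $S_r$ against the density $|h|^p$ and dividing, the limit of $rw_p(h,r)$ is $\frac{n-1}{p} - (k - \lim_{r\to 0}\mu_p(h,r)) = \frac{n-1-pk+p\lim_{r\to 0}\mu_p(h,r)}{p}$, which is \eqref{wlim}. The one point needing care is that the $O(r)$ error terms in $\Hess(r)$, together with the varying measure $d\sigma$ on $S_r$, contribute only $o(1)$ to $rw_p(h,r)$; this follows because $|h|^p$ is continuous (indeed H\"older continuous by Uhlenbeck's regularity, cited in Section~\ref{bochfo}) hence bounded near $o$, so all the averages are controlled.

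Next, to prove \eqref{mulim} under the hypothesis $h(o)\neq 0$: by H\"older continuity of $h$, we have $h(x)\to h(o)$ uniformly as $x$ ranges over $S_r$ with $r\to 0$, and $h(o)\neq 0$ means $\frac{h(x)}{|h(x)|}\to \frac{h(o)}{|h(o)|}$ uniformly. Meanwhile $|h|^p\to |h(o)|^p$ uniformly, so both numerator and denominator of $\mu_p(h,r)$ in \eqref{defmu} are, to leading order, the integral over $S_r$ of a quantity converging uniformly to the constant $|e^*(dr)\frac{h(o)}{|h(o)|}|^2$ times $|h(o)|^p$ — except that $dr$ itself depends on the direction. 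More precisely, on $S_r$ the covector $dr$ at a point $x$ is the unit covector pointing radially, and as $r\to 0$ these sweep out all unit directions at $o$; so $\lim_{r\to 0}\mu_p(h,r)$ equals the average over the unit sphere $S^{n-1}\subset T_o^*M$ of $|e^*(\xi)u|^2$, where $u = \frac{h(o)}{|h(o)|}$ is a fixed unit $k$-form and $\xi$ ranges uniformly over unit covectors. The identity $\sum_a|e^*(\omega^a)u|^2 = k$ for any orthonormal coframe, together with the rotational symmetry of the average, forces the average of $|e^*(\xi)u|^2$ to equal $\frac{k}{n}$ (each of the $n$ orthogonal directions contributing equally on average). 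This gives \eqref{mulim}, and substituting into \eqref{wlim} immediately yields \eqref{muwlim}.

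The main obstacle I anticipate is making the limit arguments rigorous near the point $o$ where $h$ may vanish or be merely H\"older — but since \eqref{mulim} and \eqref{muwlim} are only asserted under the assumption $h(o)\neq 0$, on a punctured neighborhood where $h\neq 0$ the form is smooth by elliptic regularity and all the uniform-convergence estimates above are straightforward; the genuinely delicate point, handled in \eqref{wlim}, is that $rw_p(h,r)$ has a limit even without assuming $h(o)\neq 0$, and there one must only use boundedness and measurability of $|h|^p\,d\sigma$-averages plus the deterministic $r\to 0$ behavior of $\Hess(r)$, which does not see $h$ at all beyond the weight. So I would organize the proof as: (1) record $\Hess(r) = \frac{1}{r}(g-dr^{\otimes 2}) + O(r)$ and the coframe identity $\sum_a|e^*(\omega^a)u|^2 = k$; (2) deduce \eqref{wlim} by multiplying \eqref{defw} by $r$, passing to the limit, and bounding error terms using that $|h|$ is locally bounded; (3) under $h(o)\neq 0$, compute $\lim\mu_p$ as a rotational average using uniform continuity of $\frac{h}{|h|}$, getting $\frac{k}{n}$; (4) substitute to get \eqref{muwlim}.
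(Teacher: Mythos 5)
Your proposal is correct and follows essentially the same route as the paper: the Euclidean asymptotics of $\Hess(r)$ (eigenvalues $\approx \frac{1}{r}$ on the $n-1$ sphere-tangent directions, $0$ radially) give \eqref{wlim}, and \eqref{mulim} reduces to the rotational average of $|e^*(\xi)u|^2$ over unit covectors $\xi$, which equals $\frac{k}{n}$ by the identity $\sum_a|e^*(\omega^a)u|^2=k$. The only difference is one of exposition: the paper simply cites \cite[Lemma 18]{DS22} for \eqref{mulim}, noting that smoothness of $h$ near $o$ follows from elliptic regularity when $h(o)\neq 0$, while you rederive the rotational-average computation from scratch.
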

\begin{proof}For $h$ smooth in a neighborhood of $o$, equation \eqref{mulim} is \cite[Lemma 18]{DS22}. 
By elliptic regularity, however, $h$ is smooth in a neighborhood of any point at which it is nonzero, and \eqref{mulim} follows. 

 $\Hess(r)$ converges to  the  Euclidean Hessian of $r$, as $r\to 0$. Hence all $\lambda_a$ approach $\frac{1}{r}$ and \eqref{wlim} follows. Finally, \eqref{muwlim} is an immediate consequence of \eqref{mulim} and \eqref{wlim}.  
\end{proof}
\begin{corollary}\label{zeroday} If 
$p<\frac{n}{k}$ and $w_p(h,s)\geq 0$, for $s\in(0,R]$, then $\frac{1}{p}-\mu_p(h,s)> 0$, for $s\in(0,R]$. 
If 
$p>\frac{n}{k}$ and $w_p(h,s)\leq 0$, for $s\in(0,R]$, then $\frac{1}{p}-\mu_p(h,s)<0$, for $s\in(0,R]$.
\end{corollary}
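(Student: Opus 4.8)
\textbf{Proof plan for Corollary \ref{zeroday}.}
The plan is to run an open–closed (connectedness) argument on the interval $(0,R]$ using the first-order relation \eqref{ode}, namely
\begin{align*}
\left(\tfrac{1}{p}-\mu_p(h,s)\right)Y'(s) = w_p(h,s)Y(s),
\end{align*}
together with the initial behavior as $s\to 0$ recorded in Proposition \ref{incond}. I will treat the first assertion; the second is identical after reversing the relevant inequalities. So assume $p<\frac{n}{k}$ and $w_p(h,s)\ge 0$ for all $s\in(0,R]$.

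First I would analyze the endpoint $s\to 0$. If $h(o)\ne 0$, then \eqref{mulim} gives $\mu_p(h,s)\to k/n < 1/p$, so $\frac{1}{p}-\mu_p(h,s)>0$ for all small $s$. If $h(o)=0$, I would instead use \eqref{wlim}: since $w_p(h,s)\ge 0$ we get $\lim_{s\to 0}\big(n-1-pk+p\mu_p(h,s)\big)\ge 0$, and combined with the trivial bound $0\le \mu_p(h,s)$ coming from Cauchy–Schwarz (note $|e^*(dr)\tfrac{h}{|h|}|^2\le |\tfrac{h}{|h|}|^2=1$) and the hypothesis $pk<n$, one sees $\frac{1}{p}-\mu_p(h,s)>0$ for $s$ near $0$ as well (in fact $\mu_p$ stays bounded away from $1/p$ near $0$ in this case). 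In either case the set $U:=\{s\in(0,R]: \frac{1}{p}-\mu_p(h,t)>0 \text{ for all } t\in(0,s]\}$ is a nonempty subinterval $(0,s_0)$ or $(0,s_0]$.

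Next I would show $U$ is all of $(0,R]$ by contradiction: suppose $s_0:=\sup U < R$ (or $=R$ but not attained is harmless). By continuity of $\mu_p$ in $r$ — which holds on the region where $h\ne 0$ by elliptic regularity, and near a zero of $h$ one argues via the Hölder continuity from \cite{Uhl77} or simply works on $(0,R]\setminus\{$isolated bad radii$\}$; this technical point is the one to be careful about — we have $\frac{1}{p}-\mu_p(h,s_0)=0$ while $\frac{1}{p}-\mu_p(h,s)>0$ on $(0,s_0)$. On $(0,s_0)$ the ODE \eqref{ode} reads $Y'(s) = \frac{w_p(h,s)}{\frac{1}{p}-\mu_p(h,s)}\,Y(s)$; since $w_p\ge 0$ and the denominator is positive, $Y$ is nondecreasing there, and from \eqref{nabla2}, $Y(s)=\int_{S_s}(\frac1p-\mu_p)|h|^p d\sigma>0$ on $(0,s_0)$, so $Y(s_0)>0$ by monotonicity. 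But \eqref{nabla2} evaluated at $s_0$ gives $Y(s_0) = \int_{S_{s_0}}\big(\tfrac1p-\mu_p(h,s_0)\big)|h|^pd\sigma = 0$, a contradiction. Hence $s_0=R$ and the inequality holds on all of $(0,R]$.

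The main obstacle, as flagged above, is the regularity needed to run the continuity/connectedness argument cleanly: $\mu_p(h,\cdot)$ is manifestly continuous on radii where $|h|$ does not vanish on $S_r$, but $p$-harmonic forms can vanish and are only known to be Hölder (not $C^1$) there. I expect this is handled either by noting that the identity \eqref{nabla2} — which is what the contradiction really uses — holds in the integrated (hence robust) form for all $r$, so one can phrase the whole argument in terms of $Y$ and the a.e.-valid differential relation without ever needing pointwise continuity of $\mu_p$; alternatively one invokes the continuity of $r\mapsto \int_{S_r}|h|^pd\sigma$ and of $r\mapsto Y(r)$ directly. I would write it the first way to keep the proof self-contained given only the tools assembled above.
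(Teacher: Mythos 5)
Your proof captures the two ingredients the paper uses---the integrated identity \eqref{nabla2} and the small-$r$ asymptotics from Proposition \ref{incond}---but the open--closed/contradiction scaffolding is considerably heavier than what is actually needed, and it imports the very regularity concerns you then have to explain away. The paper's argument is a direct sign reading: set $Y(s) := \int_{B_s} w_p(h,r)|h|^p\,dv$; then $w_p\ge 0$ makes $Y$ nonnegative and nondecreasing, \eqref{muwlim} shows $w_p(h,r)>0$ for $r$ small (so $Y(s)>0$ for every $s>0$), and \eqref{nabla2} identifies $\frac{1}{p}-\mu_p(h,s) = Y(s)\big/\int_{S_s}|h|^p d\sigma > 0$. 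No continuity of $\mu_p$, no ODE \eqref{ode}, no supremum $s_0$, no contradiction. Your closing remark---that one can phrase everything in terms of $Y$ and the integrated identity---is exactly the right instinct; had you committed to it you would have landed on the paper's one-line proof rather than the connectedness argument.

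There is also a genuine error in your $h(o)=0$ branch. From \eqref{wlim} and $w_p\ge 0$ you obtain $\lim_{r\to 0} r w_p(h,r)\ge 0$, i.e.\ a \emph{lower} bound $\lim_{r\to 0}\mu_p(h,r)\ge k-\tfrac{n-1}{p}$; combining this with the other lower bound $\mu_p\ge 0$ and the hypothesis $pk<n$ does not yield the needed \emph{upper} bound $\mu_p< 1/p$, so the claim that "$\tfrac{1}{p}-\mu_p(h,s)>0$ for $s$ near $0$" does not follow from what you wrote. (One can still argue this case---e.g.\ if $Y$ vanished on an initial interval then $\mu_p\to 1/p$, and feeding that into \eqref{wlim} forces $\lim r w_p = \tfrac{n-pk}{p}>0$, a contradiction---but that is a different argument.) In the paper's intended applications one takes $o$ with $h(o)\ne 0$, so \eqref{mulim} and \eqref{muwlim} apply directly and the issue does not arise; nonetheless, as written your $h(o)=0$ case does not establish its conclusion.
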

\begin{proof}By \eqref{nabla2}  $\frac{1}{p}-\mu_p(h,s)$ does not change sign, given the hypotheses on $w_p$,  and Proposition \ref{incond} fixes the sign.
\end{proof}
In order to exploit Proposition \ref{pricey}, we need geometric input. In the next section, we translate pinched negative curvature into lower bounds for $w_p$. 

\section{ Negative Sectional Curvature}
Let $M^n$ be a nonnegatively curved n-manifold with $-1\leq\sec_M\leq -\delta^2,$ $\delta\geq 0$, where $\sec_M$ denotes the sectional curvature of $M$. Let $r$ again denote the radial function of some geodesic ball, $B_R$, in $M$. By the Rauch comparison theorem \cite[Theorem 6.4.3]{Pet16}, the eigenvalues $\{\lambda_a\}_{a } $ of $\Hess(r)$ satisfy
\begin{align}\label{II}
\delta\coth(\delta r)\leq \lambda_a \leq\coth(r),
\end{align}
(except for the zero eigenvalue associated to the radial direction)
and $\tr \Hess(r)$   satisfies 
\begin{align}\label{mean}
\tr \Hess(r)\geq (n-1)\delta\coth(r).
\end{align}
In a neighborhood of some point $x_0\in B_R\setminus\{0\}$, let $\{\omega^a\}_{a=1}^{n-1}\cup\{dr\}$ be a local orthonormal coframe   such that the $\omega^a$ are metrically dual to Hessian eigendirections with eigenvalue $\lambda_a$. Let $h$ be a $k$-form, with $|h|(x_0)\not = 0$. 
Let $J\subset\{1,\ldots,n-1\}$ denote the indices of Hessian eigendirections such that at $x_0$, $|e^*(\omega^a) \frac{h}{|h|} |^2>\frac{1}{p}.$ Since $h$ is a $k$-form, at $x_0$ we have
\begin{align}
|e^*(dr) \frac{h}{|h|} |^2+\sum_{a=1}^{n-1}|e^*(\omega^a) \frac{h}{|h|} |^2=k.
\end{align}
Then
\begin{align} &\sum_a \lambda_a(\frac{1}{p}-|e^*(\omega^a) \frac{h}{|h|} |^2)\nonumber\\
&= \delta\coth(\delta r)(\frac{n-1}{p}-k+|e^*(dr) \frac{h}{|h|} |^2)
+\sum_a (\lambda_a-\delta\coth(\delta r))(\frac{1}{p}-|e^*(\omega^a) \frac{h}{|h|} |^2)\nonumber\\
&\geq  \delta\coth(\delta r)(\frac{n-1}{p}-k+|e^*(dr) \frac{h}{|h|} |^2)
+\sum_{a\in J} (\coth(r)-\delta\coth(\delta r))(\frac{1}{p}-|e^*(\omega^a) \frac{h}{|h|} |^2)\nonumber\\
&\geq  \delta\coth(\delta r)(\frac{n-1}{p}-k )
+ (\coth(r)-\delta\coth(\delta r))( \frac{|J|}{p} -\min\{|J|,k\})  \nonumber\\
&\geq  \coth(  r)[\delta(\frac{n-1}{p}-k )
+ (1-\delta )( \frac{k}{p} -k)]  \nonumber\\
&=  \frac{k}{p} \coth(  r)[\delta \frac{n-k-1}{k}+ 1 -p  ] .
\end{align}
 This is positive for 
$p< \delta \frac{n-k-1}{k}+ 1$   (which is less than $\frac{n}{k}$). In the quarter pinched case, $\delta =\frac{1}{2}$, and for $k\leq \frac{n-1}{2}$ $p< \delta \frac{n-k-1}{k}+ 1$ specializes to $p<  \frac{3}{2}.$
Observe that the conjugate exponent to $\delta \frac{n-k-1}{k}+ 1$ is $1+\frac{ k}{\delta (n-k-1)} ,$ (where by conjugate exponent, we mean the $p'(=\frac{p}{p-1})$ satisfying $\frac{1}{p}+\frac{1}{p'}= 1$). 

For large $p$, we   let 
$L$ denote the eigendirections such that $|e^*(\omega^a) \frac{h}{|h|} |^2<\frac{1}{p}.$ Then
\begin{align} &\sum_a \lambda_a(|e^*(\omega^a) \frac{h}{|h|} |^2-\frac{1}{p})\nonumber\\
&= \delta\coth(\delta r)(k-\frac{n-1}{p}- |e^*(dr) \frac{h}{|h|} |^2)
+\sum_a (\lambda_a-\delta\coth(\delta r))(|e^*(\omega^a) \frac{h}{|h|} |^2-\frac{1}{p})\nonumber\\
&\geq  \delta\coth(  r)(k-1-\frac{n-1}{p} )
+\sum_{a\in L} (\lambda_a-\delta\coth( r))(|e^*(\omega^a) \frac{h}{|h|} |^2-\frac{1}{p})\nonumber\\
&\geq \coth(  r) (n-k)\frac{p-1}{p}[\frac{\delta(k - 1)}{n-k}    -   \frac{1}{p-1} ].
 \end{align}
This is positive for ($\frac{n}{k}< $)  
$ \frac{n-k}{\delta (k - 1)}+1 <  p.$ For $\delta = \frac{1}{2}$ and $k\geq \frac{n+1}{2}$ this gives 
$  3 < p.$ Note that $3$ is the conjugate exponent to $\frac{3}{2}$. More generally the limiting exponents for $k$ and $n-k$ are conjugate for fixed $\delta$. 

We have included the parenthetical comparisons to $\frac{n}{k}$ for easy application of Corollary \ref{zeroday}.

Combining these positivity results with the estimates of the preceding section yields the following theorem. 
\begin{theorem}\label{thmA} Let $M$ be a complete Riemannian manifold with $-1\leq \sec_M\leq -\delta^2< 0.$ 
Let $h$ be a $p$-harmonic $k$-form on $M$. Let $ \sigma\leq \tau\leq \inj_M$. If $p<\delta \frac{n-k-1}{k}+ 1$, 
then $w_p(h,r)$ is bounded below by a positive constant independent of $h$ and $r$, and 
\begin{align}\int_{B_\sigma}w_p(h,r)|h|^pdv\leq e^{-(\tau-\sigma) k  [\delta \frac{n-k-1}{k}+ 1 -p  ] }\int_{B_\tau}w_p(h,r)|h|^pdv.
\end{align}
If $p>\frac{n-k}{\delta (k - 1)}+1$, 
then 
\begin{align}\int_{B_\sigma}w_p(h,r)|h|^pdv\leq e^{-(\tau-\sigma) k  (p-1)[\frac{\delta(k - 1)}{p}    -   \frac{(n-k)}{p(p-1)} ]}\int_{B_\tau}w_p(h,r)|h|^pdv.
\end{align}
\end{theorem}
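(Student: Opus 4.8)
The plan is to combine Proposition~\ref{pricey} with the pointwise curvature estimates derived just above the theorem statement. First I would recall that Proposition~\ref{pricey} gives the identity
\begin{align*}
\int_{B_\sigma}w_p(h,r)|h|^pdv = e^{-\int_\sigma^\tau\frac{pw_p(h,s)}{1-p\mu_p(h,s)}\frac{ds}{s}}\int_{B_\tau}w_p(h,r)|h|^pdv,
\end{align*}
so the entire task reduces to bounding the exponent from below by $(\tau-\sigma)\,k\,[\delta\frac{n-k-1}{k}+1-p]$ in the first regime (and the analogous quantity in the second). The definition \eqref{defw} of $w_p(h,r)$ is a weighted average over $S_r$ of the quantity $\sum_a\lambda_a(\frac1p-|e^*(\omega^a)\frac{h}{|h|}|^2)$, so the pointwise lower bound established above, namely $\sum_a\lambda_a(\frac1p-|e^*(\omega^a)\frac{h}{|h|}|^2)\geq \frac{k}{p}\coth(r)[\delta\frac{n-k-1}{k}+1-p]$, integrates to $w_p(h,r)\geq \frac{k}{p}\coth(r)[\delta\frac{n-k-1}{k}+1-p]$ on any sphere where $h$ is nonzero a.e.; since $h$ is smooth and generically nonzero where it does not vanish identically, and the Bochner/elliptic-regularity discussion of Section~\ref{bochfo} applies, this bound holds for the averaged $w_p$. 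In particular $w_p(h,r)>0$ throughout $(0,R]$ when $p<\delta\frac{n-k-1}{k}+1$, which is $<\frac nk$, so Corollary~\ref{zeroday} gives $\frac1p-\mu_p(h,s)>0$ as well, hence $1-p\mu_p(h,s)>0$.

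Next I would estimate the denominator of the exponent. Since $\mu_p(h,s)\geq 0$ always and $\mu_p(h,s)<\frac1p$, we have $0<1-p\mu_p(h,s)\leq 1$. Therefore
\begin{align*}
\frac{pw_p(h,s)}{1-p\mu_p(h,s)}\frac1s \;\geq\; p\,w_p(h,s)\,\frac1s \;\geq\; p\cdot\frac{k}{p}\coth(s)\Bigl[\delta\tfrac{n-k-1}{k}+1-p\Bigr]\frac1s \;\geq\; k\Bigl[\delta\tfrac{n-k-1}{k}+1-p\Bigr],
\end{align*}
where in the last step I use $\coth(s)\geq \frac1s$ for $s>0$ and the positivity of the bracket. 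Integrating over $s\in[\sigma,\tau]$ gives the exponent $\geq (\tau-\sigma)\,k\,[\delta\frac{n-k-1}{k}+1-p]$, and plugging this into the identity from Proposition~\ref{pricey} yields the first inequality. The uniform positive lower bound on $w_p$ independent of $h$ and $r$ on $[\sigma,\tau]$ follows from the same pointwise estimate together with $\coth(r)\geq\coth(\tau)$ there — actually $\coth$ is decreasing, so one uses $\coth(r)\geq\coth(\inj_M)>0$; I would state it that way.

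For the second inequality the argument is dual: when $p>\frac{n-k}{\delta(k-1)}+1$ ($>\frac nk$) the pointwise computation above with the index set $L$ gives $-\sum_a\lambda_a(\frac1p-|e^*(\omega^a)\frac{h}{|h|}|^2)\geq \coth(r)(n-k)\frac{p-1}{p}[\frac{\delta(k-1)}{n-k}-\frac{1}{p-1}]$, so $-w_p(h,r)$ is bounded below by a positive multiple of $\coth(r)$, and Corollary~\ref{zeroday} now gives $\frac1p-\mu_p(h,s)<0$, i.e. $p\mu_p(h,s)-1>0$. Writing the exponent in Proposition~\ref{pricey} as $-\int_\sigma^\tau\frac{p(-w_p)}{p\mu_p-1}\frac{ds}{s}$ with both numerator and the factor $p(-w_p)$ positive, and bounding the denominator $p\mu_p(h,s)-1\leq p-1$ (since $\mu_p\leq \mu_p$ is at most... here one uses $\mu_p(h,s)\leq \frac{k}{n}\cdot$ something; more simply $\mu_p\leq 1$ always gives $p\mu_p-1\leq p-1$), the same $\coth(s)\geq 1/s$ estimate produces the exponent $\geq (\tau-\sigma)k(p-1)[\frac{\delta(k-1)}{p}-\frac{n-k}{p(p-1)}]$, giving the second inequality.

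\textbf{Main obstacle.} The routine calculus is easy; the point requiring care is the passage from the pointwise inequality (valid at points where $|h|\neq0$) to an inequality for the \emph{averaged} quantities $w_p$ and $\mu_p$ on the sphere $S_r$, and the legitimacy of integrating the ODE \eqref{ode} across spheres on which $h$ may vanish on a set of positive measure. Since $h$ is only Hölder continuous at its zero set, I would either invoke that the zero set has measure zero for a $p$-harmonic form that is not identically zero (so the pointwise bound holds a.e.\ and averages through), or restrict attention to components of $M\setminus h^{-1}(0)$ and pass to the limit; in the negatively curved simply connected setting where the theorem is applied this is standard, mirroring exactly the $p=2$ treatment in \cite{DS22}, so I would simply cite that the computation goes through verbatim as asserted in the section's opening sentence.
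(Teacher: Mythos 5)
Your overall strategy — reading off the exponent in Proposition~\ref{pricey} and lower-bounding the integrand by the pointwise curvature estimate derived just above the theorem, together with $0<1-p\mu_p\leq 1$ (or $0<p\mu_p-1\leq p-1$) from Corollary~\ref{zeroday} — is indeed the intended argument, which the paper leaves implicit. But the final arithmetic has a genuine flaw.

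In your chain for the first inequality you write
\begin{align*}
p\cdot\tfrac{k}{p}\coth(s)\bigl[\delta\tfrac{n-k-1}{k}+1-p\bigr]\tfrac1s \;\geq\; k\bigl[\delta\tfrac{n-k-1}{k}+1-p\bigr],
\end{align*}
justified by ``$\coth(s)\geq\tfrac1s$''. That inequality gives $\tfrac{\coth(s)}{s}\geq\tfrac1{s^2}$, which is \emph{not} $\geq 1$ once $s>1$; what you actually need is $\coth(s)\geq s$, which fails for $s\gtrsim 1.2$ (for instance $\coth(2)\approx 1.04 < 2$). So as written your lower bound on the exponent does not hold on spheres of radius bigger than $1$. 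The culprit is the factor $\tfrac{ds}{s}$ that you copied from display \eqref{int0} of Proposition~\ref{pricey}. Tracing that display back to the ODE \eqref{ode}, $(\tfrac1p-\mu_p)Y' = w_p\,Y$, integration gives $\ln Y(\tau)-\ln Y(\sigma)=\int_\sigma^\tau\tfrac{pw_p(h,s)}{1-p\mu_p(h,s)}\,ds$ with a plain $ds$, not $\tfrac{ds}{s}$; the extra $\tfrac1s$ in \eqref{int0} appears to be a typographical slip. With $ds$ in place, the bound you want is just $\coth(s)\geq 1$ for $s>0$, and then your chain is correct and produces exactly the theorem's first exponent. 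So you should flag and repair the formula from Proposition~\ref{pricey} rather than patch it with the incompatible $\coth(s)\geq 1/s$.

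For the second inequality you assert the claimed exponent but do not compute it. Carrying out the same estimate — $-w_p\geq\coth(s)\tfrac{(p-1)\delta(k-1)-(n-k)}{p}$ from the curvature computation, $p\mu_p-1\leq p-1$, and $\coth\geq 1$ — yields exponent at least $(\tau-\sigma)\bigl(\delta(k-1)-\tfrac{n-k}{p-1}\bigr)$, which is also exactly what the duality $k\leftrightarrow n-k$, $p\leftrightarrow\tfrac{p}{p-1}$ applied to the first exponent predicts. This differs from the stated exponent $(\tau-\sigma)\tfrac{k}{p}\bigl[(p-1)\delta(k-1)-(n-k)\bigr]$ by the factor $\tfrac{k(p-1)}{p}$, and for $k\geq 2$, $p>2$ your natural estimate is strictly weaker. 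Either a sharper bound on $p\mu_p-1$ is needed (none is supplied by the paper's Proposition~\ref{incond}/Corollary~\ref{zeroday}), or the exponent in the theorem statement carries a typo. In any case you cannot close the second inequality by merely asserting ``the same estimate produces'' that constant — you should work it out and confront the discrepancy. Your concluding paragraph about the measure-zero set $\{h=0\}$ and the legitimacy of the averaging is a reasonable point of caution, but it is peripheral; the real gap is the arithmetic of the exponent.
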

These monotonicity estimates give us a new proof of Pansu's reduced $L_p$-cohomology vanishing theorem for simply connected complete manifolds of pinched negative curvature. The fact that we obtain precisely the same pinching constraints is not surprising as both proofs at some point rely on estimates for geometric quantities related to the eigenvalues of $\Hess(r)$. 
\begin{corollary}\label{coroA}(Pansu)
Let $M^n$ be a simply connected complete Riemannian n-manifold with $-1\leq \sec_M\leq -\delta^2< 0.$ 
  If $k<\frac{n}{p}$, and 
 $p<\delta \frac{n-k-1}{k}+ 1$, or $k>\frac{n}{p}$, and $p>\frac{n-k}{\delta (k - 1)}+1$,
then 
\begin{align}\mathcal{H}^k_p(M) =0 = H^k_{p,red}(M).
\end{align}
 
\end{corollary}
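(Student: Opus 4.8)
The plan is to combine the monotonicity estimate of Theorem \ref{thmA} with the fact (Corollary following the Nonlinear Hodge Theorem) that $H^k_{p,red}(M)\iso\mathcal{H}^k_p(M)$, so it suffices to show that any $p$-harmonic $k$-form $h$ on $M$ vanishes identically. The essential point is that on a simply connected complete manifold of pinched negative curvature, the injectivity radius is infinite, so in Theorem \ref{thmA} we may let $\tau\to\infty$ while keeping $\sigma$ fixed. Concretely, fix $x\in M$ and $\sigma>0$. Under either pinching hypothesis, the bracketed exponent in Theorem \ref{thmA} is a strictly positive constant $c=c(n,k,p,\delta)$, so
\begin{align}
\int_{B_\sigma(x)} w_p(h,r)\,|h|^p\,dv \leq e^{-(\tau-\sigma)c}\int_{B_\tau(x)} w_p(h,r)\,|h|^p\,dv
\end{align}
for every $\tau\geq\sigma$. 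Since $h\in L_p(M)$ and, by the curvature bounds, $w_p(h,r)$ is bounded above by a constant $C$ (it is a weighted average of the $\lambda_a$'s, each of which is at most $\coth(r)\leq\coth(\sigma)$ once $r\geq\sigma$; and on $B_\sigma$ one uses $\lambda_a\le 1/r$ together with the near-origin behavior from Proposition \ref{incond}), the right-hand side is bounded by $C e^{-(\tau-\sigma)c}\|h\|_{L_p(M)}^p\to 0$ as $\tau\to\infty$. Hence $\int_{B_\sigma(x)} w_p(h,r)\,|h|^p\,dv=0$.

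Next I would upgrade this to $h\equiv 0$. In the case $p<\delta\frac{n-k-1}{k}+1$ we have the stronger conclusion of Theorem \ref{thmA}, namely $w_p(h,r)$ is bounded \emph{below} by a positive constant $c_0$ independent of $h$ and $r$; therefore $c_0\int_{B_\sigma(x)}|h|^p\,dv\leq\int_{B_\sigma(x)}w_p(h,r)|h|^p\,dv=0$, so $h$ vanishes on $B_\sigma(x)$, and since $x$ and $\sigma$ were arbitrary, $h\equiv 0$. In the dual range $p>\frac{n-k}{\delta(k-1)}+1$ one argues symmetrically: here $\frac1p-\mu_p(h,r)<0$ and $-w_p(h,r)$ is bounded below by a positive constant (this is exactly the positivity of $\sum_a\lambda_a(|e^*(\omega^a)\frac{h}{|h|}|^2-\frac1p)$ established in the displayed computation of Section 5), so the same integral identity with reversed signs forces $|h|=0$ on each geodesic ball. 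Alternatively, and more uniformly, one can invoke Poincar\'e duality (Theorem \ref{delduality}): $\ast_p$ maps $\mathcal{H}^k_p(M)$ isometrically onto $\mathcal{H}^{n-k}_{p'}(M)$, and the hypothesis $p>\frac{n-k}{\delta(k-1)}+1$ is precisely the statement that $p'<\delta\frac{n-(n-k)-1}{n-k}+1$, so the first case applied to $(n-k,p')$ gives $\mathcal{H}^{n-k}_{p'}(M)=0$, hence $\mathcal{H}^k_p(M)=0$.

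The one technical point that needs care — and which I expect to be the main obstacle — is the passage $\tau\to\infty$: one must verify that $\int_{B_\tau(x)}w_p(h,r)|h|^p\,dv$ does not grow fast enough to defeat the exponential factor. This is where the global $L_p$ hypothesis enters: $w_p(h,r)$ is a bounded function of $r$ (uniformly, once the curvature is pinched, since the Hessian eigenvalues lie in $[\delta\coth(\delta r),\coth(r)]$ away from the origin and the integrand near the origin is controlled by Proposition \ref{incond}), so $\int_{B_\tau(x)}w_p(h,r)|h|^p\,dv\leq C\|h\|_{L_p(M)}^p<\infty$ uniformly in $\tau$, and the product with $e^{-(\tau-\sigma)c}$ indeed tends to zero. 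A second, minor, point is continuity: since $h$ is only known to be H\"older continuous at its zero set (Section \ref{bochfo}), "$h\equiv 0$ on an open set for every ball" must be phrased as "$|h|^p=0$ a.e.," which is all that is needed to conclude $h=0$ in $L_p$ and hence the cohomology class is zero.
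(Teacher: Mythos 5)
Your proposal is correct and follows essentially the same path as the paper's proof: invoke the Hodge isomorphism $H^k_{p,red}\cong\mathcal{H}^k_p$, apply Theorem~\ref{thmA}, take $\tau\to\infty$ (using the Cartan--Hadamard infinite injectivity radius), and conclude $\int_{B_\sigma}w_p(h,r)|h|^p\,dv=0$ for all $\sigma$, which forces $h\equiv 0$ because $w_p$ is of definite sign and bounded away from zero. The paper's proof is terser --- it simply states that $w_p(h,r)\neq 0$ for small $r$ and nonzero $h$ gives a contradiction --- whereas you spell out both the boundedness of the right-hand side (the $C\|h\|_{L_p(M)}^p$ estimate) and the uniform positive lower bound on $|w_p|$; these are worthwhile points that the paper leaves implicit. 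Your alternative treatment of the range $p>\frac{n-k}{\delta(k-1)}+1$ via Poincar\'e duality (Theorem~\ref{delduality}, reducing to $(n-k,p')$) is a genuine variant not taken in the paper, and the arithmetic you check --- that $p>\frac{n-k}{\delta(k-1)}+1$ is equivalent to $p'<\delta\frac{(n-k)-1}{n-k}+1$ together with $n-k<\frac{n}{p'}$ --- is correct; this route is arguably cleaner since it eliminates one sign-juggling case.
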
 
\begin{proof}Let 
$h\in \mathcal{H}^k_p(M)$. Take the limit as
$\tau\to\infty$ in Theorem \ref{thmA} to deduce \\ $\int_{B_\sigma}w_p(h,r)|h|^pdv = 0$, for all $\sigma$ and all balls. Since $w_p(h,r)\not = 0$ for nonzero $h$ and $r$ sufficiently small, we obtain a contradiction unless $h=0$. 
\end{proof}

\begin{remark}
In the $p=2$ case, exponential bounds of the form given in Theorem \ref{thmA}  imply for $M$ compact that $\frac{b^k(M)}{vol(M)}$ is $O(e^{-\inj_M k  [\delta \frac{n-k-1}{k}+ 1 -p  ] })$, and therefore imply the vanishing of the $k^{th}$ $L^2$ Betti number of $M$. The desire to exploit these bounds for such Betti number estimates was  our initial motivation for extending these monotonicity estimates to $1<p<\infty$. Unfortunately, more work remains to translate these estimates into Betti number bounds. 
\end{remark}

In the next section we extend the monotonicity estimates developed for $p$-harmonic forms to obtain vanishing theorems for the non-Hausdorff summand of the cohomology, even though that summand does not admit $p$-harmonic representatives. Once again we obtain the same pinching constraints as Pansu. 

\section{Closure of exact forms}
In this section we consider the torsion $T^k_p(M)$. We give a new proof of  a vanishing theorem of Pansu (see \cite{Pan08}). 
\begin{theorem}\label{thmT}(Pansu)
Let $M$ be a simply connected complete Riemannian manifold with $-1\leq \sec_M\leq -\delta^2< 0.$ 
  If $k-1<\frac{n}{p}$, and 
 $p<\delta \frac{n-k}{k-1}+ 1$,
then 
\begin{align}T^k_p(M) =0.
\end{align}
 
\end{theorem}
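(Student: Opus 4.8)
The strategy is to mimic the $p$-harmonic monotonicity argument of Sections 4--5 for the $p$-coclosed primitives supplied by Lemma~\ref{prelim}, and then to show that under the stated pinching the primitive of any element of $\bar B_p^k(M)$ that lies in $\overline{B_p^k(M)}$ must actually lie in $B_p^k(M)$, i.e. the limiting primitive is genuinely exact with $L_p$ data. Concretely, suppose $\zeta\in\bar B^k_p(M)$; then $\zeta=\lim_j d\beta_j$ in $L_p$ with $\beta_j\in C^{k-1}_p(M)$. By Lemma~\ref{prelim} each $d\beta_j$ has a $p$-coclosed primitive $\gamma_j$, and $\|\gamma_j\|_{L_p}\le\|\beta_j\|_{L_p}$ is \emph{not} a priori bounded; the point of the monotonicity estimate is to bound $\gamma_j$ locally and extract a weak limit. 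The natural object is the analogue of Proposition~\ref{pricey} for a $p$-coclosed $(k-1)$-form $\gamma$, using $L_{\partial_r}=di_{\partial_r}+i_{\partial_r}d$ together with $d^*(|\gamma|^{p-2}\gamma)=0$ in place of $d(|h|^{p-2}h)=0$, which should give a first-order ODE for $Y(t)=\int_{B_t}w_p(\gamma,r)|\gamma|^p\,dv$ with the roles of $e(\omega^a)$ and $e^*(\omega^a)$ interchanged relative to the $p$-harmonic case.

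\textbf{Key steps, in order.} First I would establish a Bochner/monotonicity identity for $p$-coclosed forms, running the computation of Proposition~\ref{pricey} with $i_{\partial_r}$ and $d$ swapping roles; this replaces the combination $\sum_a\lambda_a(\tfrac1p-|e^*(\omega^a)\tfrac{\gamma}{|\gamma|}|^2)$ by $\sum_a\lambda_a(\tfrac1p-|e(\omega^a)\tfrac{\gamma}{|\gamma|}|^2)$ or its dual, which for a $(k-1)$-form means the relevant dimension count is $n-1-(k-1)=n-k$ rather than $n-1-k$. Second, repeating the Rauch-comparison positivity computation of Section~5 verbatim but with $k$ replaced by $k-1$ in the exterior-degree count, the quantity $w_p(\gamma,r)$ is bounded below by a positive constant times $\coth(r)[\delta\frac{n-k}{k-1}+1-p]$, positive exactly when $p<\delta\frac{n-k}{k-1}+1$, and this constant is uniform in $\gamma$. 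Third, the resulting exponential decay $\int_{B_\sigma}w_p(\gamma_j,r)|\gamma_j|^p\,dv\le e^{-(\tau-\sigma)(k-1)[\delta\frac{n-k}{k-1}+1-p]}\int_{B_\tau}w_p(\gamma_j,r)|\gamma_j|^p\,dv$, combined with the near-origin asymptotics of Proposition~\ref{incond} (with $k-1$ in place of $k$), gives a uniform bound on $\int_{B_\sigma}|\gamma_j|^p\,dv$ for each fixed ball $B_\sigma$ in terms of, say, $\int_{B_{2\sigma}}|\gamma_j|^p\,dv$, which one must then control globally --- this is where I would integrate the monotonicity out to $\tau\to\infty$ to see that the total $L_p$ mass of $\gamma_j$ is controlled by the data near the boundary at infinity, hence by $\|d\beta_j\|_{L_p}=\|d\gamma_j\|_{L_p}\to\|\zeta\|_{L_p}$. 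Finally, with $\{\gamma_j\}$ bounded in $L_p$ one passes to a weak limit $\gamma_\infty\in L_p$; since $d$ commutes with weak limits, $d\gamma_\infty=\zeta$, so $\zeta\in B^k_p(M)$, proving $\bar B^k_p(M)=B^k_p(M)$ and hence $T^k_p(M)=0$.

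\textbf{Main obstacle.} The delicate point is turning the \emph{relative} monotonicity estimate (ratio of integrals over concentric balls) into an \emph{absolute} $L_p$ bound on $\gamma_j$. In the $p$-harmonic vanishing argument (Corollary~\ref{coroA}) one only needs $\int_{B_\sigma}w_p(h,r)|h|^p\,dv=0$, which follows by letting $\tau\to\infty$ against a finite total mass $\|h\|_{L_p}^p$; here $\gamma_j$ is not known a priori to be in $L_p(M)$ globally, only in $C_p^{k-1}$, so one cannot simply let $\tau\to\infty$. The fix I expect to need is to work on an exhaustion or to observe that $\gamma_j$, being the norm-minimizing primitive, automatically satisfies $\|\gamma_j\|_{L_p}\le\|\beta_j\|_{L_p}<\infty$, so $\gamma_j\in L_p(M)$ for each $j$ with an (a priori $j$-dependent) bound, and then use the exponential decay to make that bound \emph{independent} of $j$: integrating \eqref{int0} from a fixed small $\sigma$ out to $\infty$ shows $\int_{B_\sigma}w_p(\gamma_j,r)|\gamma_j|^p\,dv=0$ unless the total mass is infinite, which forces instead the global bound $\|\gamma_j\|_{L_p}^p\lesssim \sup_x I_p(\gamma_j,x,\infty)$ to be finite and comparable to $\|\zeta\|_{L_p}^p$ via a covering argument. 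Making this last comparison quantitative and uniform --- essentially a Caccioppoli-type inequality bounding $\|\gamma\|_{L_p}$ by $\|d\gamma\|_{L_p}$ for $p$-coclosed primitives under the pinching hypothesis --- is the crux, and it is exactly what the positivity of $w_p$ is designed to deliver.
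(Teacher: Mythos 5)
Your overall strategy is the right one and matches the paper's: take the $p$-coclosed primitives $b_j$ of a Cauchy sequence $z_j\in B^k_p(M)$, get a uniform $L_p$ bound on the $b_j$ from the positivity of $w_p$ with $k$ replaced by $k-1$, extract a weak limit, and observe $d$ commutes with weak limits. The pinching threshold $p<\delta\tfrac{n-k}{k-1}+1$ you compute by shifting the degree count down by one is correct and is exactly the paper's.

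There are, however, two genuine gaps. First, your claim that the monotonicity computation for $p$-coclosed forms has ``the roles of $e(\omega^a)$ and $e^*(\omega^a)$ interchanged'' is wrong: the paper's coclosed identity \eqref{coclprice} uses the \emph{same} weight $w_p$ from \eqref{defw}, built from $e^*(\omega^a)$, applied unchanged to the $(k-1)$-form $b_j$. The only change from the $p$-harmonic case is in the degree, not in swapping $e$ and $e^*$; you may be conflating $p$-coclosed forms (which are not closed) with $p$-coharmonic forms (which are coclosed and satisfy the dual equation $d(|f|^{p-2}f)=0$).

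Second, and more seriously, your plan to run the ODE and exponential-decay argument of Proposition \ref{pricey} does not account for the fact that $b_j$ is \emph{not} closed: $db_j=z_j\neq 0$. In Proposition \ref{pricey} the relation $dh=0$ is what makes $L_{\partial_r}h=d\,i_{\partial_r}h$ with no extra term, yielding a homogeneous first-order ODE for $Y(t)$. For $b_j$ the Lie derivative picks up the source $i_{\partial_r}z_j$, so the Stokes computation produces the forced identity
\begin{align*}
\int_{B_R} w_p(b_j,r)\,|b_j|^{p}\,dv =\int_{S_R}\Bigl(\tfrac{1}{p}-\mu_p(b_j,R)\Bigr)|b_j|^p\,d\sigma-\int_{B_R}\langle e^*(dr)z_j,\,|b_j|^{p-2}b_j\rangle\,dv,
\end{align*}
rather than a homogeneous ODE, and there is no clean exponential-decay statement. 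The ``main obstacle'' you describe---turning relative monotonicity into an absolute $L_p$ bound---is therefore not resolved by your proposed route and does not require a covering argument at all. The point you are missing is that the forced identity itself, with $R\to\infty$ (the $S_R$ term vanishes along a subsequence since $b_j\in L_p$), together with the lower bound $w_p(b_j,r)\geq\kappa_{k-1}>0$ and H\"older on the source term, immediately gives the Caccioppoli-type inequality $\kappa_{k-1}\|b_j\|_{L_p}\leq\|z_j\|_{L_p}$. That single inequality supplies the uniform bound you need and finishes the proof; no exponential decay, near-origin asymptotics, or covering is involved.
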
 
\begin{proof}
 Let $\{z_j\}_{j=1}^\infty\subset B_p^k(M)$ be an $L_p$ Cauchy sequence, with $z_j\stackrel{L_p}{\to }z\in Z_p^k(M).$ Let $ b_j$ denote   the  $p$-coclosed primitive of   $z_j$. (See Lemma \ref{prelim}.)  
Repeating the first steps of the derivation of our monotonicity estimates given in Section \ref{monoto}, we have, since $db_j = z_j$, 
\begin{align*}\int_{B_R}\langle \{d,e^*(dr)\}b_j,|b_j|^{p-2}b_j\rangle dv = \int_{S_R}|e^*(dr)b_j|^2|b_j|^{p-2} d\sigma+\int_{B_R}\langle  e^*(dr)z_j,|b_j|^{p-2}b_j\rangle dv,
\end{align*}
and
\begin{align*}\int_{B_R}\langle \{d,e^*(dr)\}b_j,|b_j|^{p-2}b_j\rangle dv = -\int_{B_R}w_p(b_j,r)|b_j|^{p}  dv +\int_{S_R} \frac{1}{p}|b_j|^pd\sigma.
\end{align*}
Combined, these yield 
\begin{align} \label{coclprice} &\int_{B_R} w_p(b_j,r)|b_j|^{p}  dv   =  \int_{S_R} (\frac{1}{p}-\mu_p(b_j,R))|b_j|^pd\sigma -\int_{B_R}\langle  e^*(dr)z_j,|b_j|^{p-2}b_j\rangle dv.
\end{align}
Our pinching hypotheses imply  $w_p(b_j,r)\geq \kappa_{k-1}>0$, for some positive constant $\kappa_{k-1}$.  Hence, taking the limit as $R\to\infty$, the $S_R$ integral vanishes, and H\"older's inequality yields
\begin{align}  \kappa_{k-1}\|b_j\|_{L_p}  \leq  \|z_j\|_{L_p}. 
\end{align}
Since $\{z_j\}_{j=1}^\infty$ is Cauchy, $\{b_j\}_{j=1}^\infty$ is bounded. Passing to a subsequence, we can assume $b_j$ coverges weakly in $L_p$ to some $b.$
Since $d$ commutes with weak limits, $db = z$. Hence $\bar B_p^k/B_p^k = \{0\}.$
\end{proof}

\section{Injectivity and pinched negative sectional curvature}
 Developing the techniques of the previous section, we next prove injectivity results for natural maps from $H^k_{p,red}(M)\to  H^k_{q}(M),$ for $p<\delta\frac{n-k}{k-1}+1$, $p<q$, and $q-p$ small.

By Corollary \ref{bdd}, $p$-harmonic forms with finite $L_p$ norm are pointwise bounded on complete manifolds $N$ with bounded sectional curvature and injectivity radius bounded below. Hence for $p\leq q$,  $\mathcal{H}^k_{p}(N)\subset Z_q^k(N),$ and we can define a map 
$$i_{p,q}^k:H^k_{p,red}(N)\stackrel{\text{Hodge isomorphism}}{\to}\mathcal{H}^k_{p}(N)\stackrel{\text{inclusion}}{\to}  Z^k_{q}(N)\stackrel{\text{quotient}}{\to}  H^k_{q}(N).$$ 
\begin{proposition}
Let $M$ be a complete, simply connected, negatively curved n-manifold, with $-1\leq \sec_M\leq-\delta^2<0$.
If $p<1+\delta\frac{n-k}{k-1}$ and $\frac{q-p}{q}<\delta(n-k)-(p-1)(k-1)$,\\
then $i_{p,q}^k$ is injective.
\end{proposition}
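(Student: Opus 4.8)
The strategy is to show that if $h\in\mathcal{H}^k_p(M)$ represents a class in the kernel of $i_{p,q}^k$, then $h=0$. Being in the kernel means $h = d\beta$ for some $\beta\in C_q^{k-1}(M)$; replacing $\beta$ by its $q$-coclosed primitive (Lemma \ref{prelim}, applied with exponent $q$ to $h\in B_q^k(M)$), we may assume $d^*(|\beta|^{q-2}\beta)=0$. The goal is then to run a monotonicity/vanishing argument that simultaneously sees both $h$ (a $p$-harmonic form) and $\beta$ (a $q$-coclosed primitive), and to arrange the curvature pinching so that the combined exponential rate is still positive. The governing identity is the Stokes/Lie-derivative computation from Sections \ref{monoto} and \ref{closure} applied to $\beta$ with the pairing against $|\beta|^{q-2}\beta$, using $d\beta = h$: this is exactly \eqref{coclprice} with $(b_j,z_j,p)$ replaced by $(\beta,h,q)$, giving
\begin{align*}
\int_{B_R} w_q(\beta,r)|\beta|^{q}\,dv = \int_{S_R}\Big(\tfrac1q-\mu_q(\beta,R)\Big)|\beta|^q\,d\sigma - \int_{B_R}\langle e^*(dr)h,|\beta|^{q-2}\beta\rangle\,dv .
\end{align*}

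\textbf{Key steps.} First, I would verify that the pinching hypothesis $p<1+\delta\frac{n-k}{k-1}$ guarantees $w_q(\beta,r)\geq\kappa_{k-1}>0$ for $q$ close enough to $p$: the relevant positivity constant for a $(k-1)$-form with exponent $q$ is, from Section 5, of order $\coth(r)\big[\delta(n-k)-(q-1)(k-1)\big]/$(something positive$)$, which is strictly positive exactly when $q<1+\delta\frac{n-k}{k-1}$, an open condition containing $q=p$. Second, as $R\to\infty$ the boundary term $\int_{S_R}(\tfrac1q-\mu_q)|\beta|^q\,d\sigma$ must be shown to vanish along a subsequence; this is the standard device (if it did not go to zero, $\int|\beta|^q=\infty$, contradicting $\beta\in L_q$) already used in the proof of Theorem \ref{thmT}. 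Third — the crux — I would estimate the cross term $\int_{B_\infty}\langle e^*(dr)h,|\beta|^{q-2}\beta\rangle\,dv$ using H\"older with exponents $q$ and $q'$: it is bounded by $\|h\|_{L_q}\,\|\beta\|_{L_q}^{q-1}$. Here $\|h\|_{L_q}<\infty$ because $h$ is bounded (Corollary \ref{bdd}) and, one must check, $h\in L_q$; since $h$ is $p$-harmonic on a pinched negatively curved space with $p<1+\delta\frac{n-k}{k-1}$, Theorem \ref{thmA} gives exponential decay of $I_p(h,x,R)$, hence $h\in L_s$ for all $s\geq p$ in the admissible range, in particular for $q$ with $q-p$ small. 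Combining, $\kappa_{k-1}\|\beta\|_{L_q}^q \leq \|h\|_{L_q}\|\beta\|_{L_q}^{q-1}$, so $\|\beta\|_{L_q}\leq \kappa_{k-1}^{-1}\|h\|_{L_q}<\infty$, i.e.\ $\beta\in C_q^{k-1}(M)$ with $d\beta=h$ and finite norm.

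\textbf{Finishing and the main obstacle.} Having produced $\beta\in C_q^{k-1}(M)\cap L_q$ with $d\beta=h$, I still need $[h]_q=0$ to force $h=0$ as a $p$-harmonic form. The point is that $h\in\mathcal{H}^k_p(M)$ is the unique $L_p$ norm minimizer in its reduced $L_p$-class (Nonlinear Hodge Theorem), so it suffices to show $h$ is $L_p$-approximable by exact forms $dC_p^{k-1}(M)$; but this follows by truncating $\beta$ with cutoff functions $\chi_R$ supported in $B_R$ — the error $d\chi_R\wedge\beta$ has $L_p$ norm controlled by $\|\beta\|_{L_p(\mathrm{supp}\,d\chi_R)}$, and the monotonicity/decay estimates (this time applied to the $q$-coclosed $\beta$, whose $L_q$ mass on $B_R^c$ decays, together with the bound $\|\beta\|_{L_p(B_R)}\lesssim \|\beta\|_{L_q(B_R)}\,\mathrm{vol}(B_R)^{1/p-1/q}$ — which requires $p$ itself, not just the combination, so one uses that $\beta$ inherits decay from the identity above with a small-$\varepsilon$ loss) push this error to $0$ along a subsequence. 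The condition $\frac{q-p}{q}<\delta(n-k)-(p-1)(k-1)$ is precisely what is needed to make the two competing rates — the positive monotonicity rate $\kappa_{k-1}\sim \delta(n-k)-(q-1)(k-1)$ for $\beta$ and the volume-growth exponent $\mathrm{vol}(B_R)\sim e^{cR}$ entering the $L_p$-vs-$L_q$ interpolation with weight $\frac1p-\frac1q$ — balance in favor of decay. I expect the genuine obstacle to be making this last balancing rigorous: carefully tracking that the exponential decay rate of $\|\beta\|_{L_q(B_R^c)}$ beats $e^{cR(\frac1p-\frac1q)}$, which is exactly the content of the stated inequality $\frac{q-p}{q}<\delta(n-k)-(p-1)(k-1)$, and ensuring the truncated forms lie in $C_p^{k-1}(M)$ (finite $L_p$ norm of $\beta$ on all of $M$, not just balls), which again is where the smallness of $q-p$ is consumed.
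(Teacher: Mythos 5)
Your proposal takes a genuinely different route from the paper, and it contains a gap that I do not think can be repaired in the form you describe.

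\textbf{What the paper does.} The paper never works with a single globally defined $q$-coclosed primitive $\beta$. Instead, for each $R$ it takes $b_R$, the $p$-coclosed $L_p(B_R)$-norm-minimizing primitive of $h|_{B_R}$ (Lemma \ref{prelim} on the manifold-with-boundary $\bar B_R$). The identity \eqref{coclprice} gives a differential inequality for $Y_R(T)=\int_{B_T}(w_p(b_R,r)-\epsilon)|b_R|^p\,dv$ once the cross term is absorbed, and integrating it from $T_0$ to $R$ yields an exponential \emph{lower} bound on $Y_R(R)$. The crucial counterweight is the defining minimality of $b_R$: since $\beta_q|_{B_R}$ is a competitor, $\|b_R\|_{L_p(B_R)}\le\|\beta_q\|_{L_p(B_R)}\le\|\beta_q\|_{L_q(M)}^{p/q}\mathrm{vol}(B_R)^{(q-p)/q}$, an exponential \emph{upper} bound. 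The hypothesis $\tfrac{q-p}{q}<\delta(n-k)-(p-1)(k-1)$ is exactly what makes these two exponential rates incompatible. Your proposal omits the ball-wise minimal primitives entirely, and with them the comparison inequality that makes the contradiction work.

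\textbf{The gap.} Your argument hinges on truncating the global $q$-coclosed primitive $\beta$ and showing $\|d\chi_R\wedge\beta\|_{L_p}\to 0$, which via H\"older requires $\|\beta\|_{L_q(\text{annulus})}\cdot\mathrm{vol}(\text{annulus})^{1/p-1/q}\to 0$. Since the annular volume grows like $e^{(n-1)R(q-p)/(pq)}$ in pinched negative curvature, this needs $\|\beta\|_{L_q(B_R^c)}$ to decay exponentially at a rate exceeding $(n-1)\tfrac{q-p}{pq}$. But nothing you cite establishes \emph{any} exponential decay of $\beta$. The adapted identity
\begin{align*}
\int_{B_R} w_q(\beta,r)|\beta|^{q}\,dv = \int_{S_R}\Big(\tfrac1q-\mu_q(\beta,R)\Big)|\beta|^q\,d\sigma - \int_{B_R}\langle e^*(dr)h,|\beta|^{q-2}\beta\rangle\,dv
\end{align*}
gives, after letting $R\to\infty$ along a subsequence and using $w_q\ge\kappa$, only the \emph{a priori} bound $\|\beta\|_{L_q}\le\kappa^{-1}\|h\|_{L_q}$ — a finite number, with no decay rate at all. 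Unlike for a $p$-harmonic $h$, the cross term $\int\langle e^*(dr)h,|\beta|^{q-2}\beta\rangle$ does not vanish and ruins any attempt to turn this identity into a decay inequality for $\beta$. Your phrase \emph{``$\beta$ inherits decay from the identity above with a small-$\varepsilon$ loss''} is precisely the step that has no content behind it, and I don't see how to fill it without effectively recreating the paper's $b_R$-comparison.

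\textbf{Two smaller points.} (i) Your justification that $h\in L_q$ via \emph{``Theorem \ref{thmA} gives exponential decay of $I_p(h,x,R)$''} is misplaced: Theorem \ref{thmA} requires $p<1+\delta\tfrac{n-k-1}{k}$ (the $k$-form pinching), not the $(k-1)$-form pinching $p<1+\delta\tfrac{n-k}{k-1}$ in the Proposition's hypothesis — and if the former held you would already have $h=0$ by Corollary \ref{coroA}. The correct (and simpler) reason $h\in L_q$ is Corollary \ref{bdd}: $h\in L_p\cap L^\infty$ implies $h\in L_s$ for all $s\ge p$. (ii) The conclusion $\kappa\|\beta\|_{L_q}\le\|h\|_{L_q}$ you derive in the ``Key steps'' paragraph is circular — you already knew $\beta\in L_q$ by hypothesis — so it does not advance the argument. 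Your observation that $[h]_{p,\mathrm{red}}=0$ forces $h=0$ by uniqueness of the $p$-harmonic representative is correct, and the cutoff strategy is a reasonable idea, but without an exponential decay estimate for $\beta$ it cannot close.
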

\begin{proof}
Let $h$ be a $p$-harmonic representative of an element of the kernel of $i_{p,q}^k$. Fix a point $o\in M$, and let $B_R$ denote the geodesic ball of radius $R$ and center $o$. Let $b_R$ denote the $p$-coclosed primitive of $h$ restricted to $\bar B_R$. By \eqref{coclprice} we have for $T\leq R$, 
\begin{align} \label{coclpriceR} &\int_{B_T} w_p(b_R,r)|b_R|^{p}  dv   =  \int_{S_T} (\frac{1}{p}-\mu_p(b_R,T))|b_R|^pd\sigma -\int_{B_T}\langle  e^*(dr)h,|b_R|^{p-2}b_R\rangle dv.
\end{align}
Suppose for  $\epsilon:= \frac{1}{2p}(\delta(n-k)-(p-1)(k-1)-\frac{q-p}{q})$, there exists a sequence $\{(T_j,R_j)\}_j$ 
with $T_j\to\infty$ and $R_j\geq T_j$, so that 
$\int_{B_{T_j}} |b_{R_j}|^{p}  dv\leq \frac{1}{\epsilon^p}\int_{B_{T_j}} |h|^{p}  dv.$ Then passing to a subsequence, we can find $b_\infty\in L_p$ such that the $b_{R_j}$ converge weakly to $b_\infty$ on compact sets. Therefore $db_\infty = h$, which implies $h=0$. Hence, for $h\not = 0$, $\exists T_0>0$ so that 
\begin{align}\label{est1}\int_{B_{T}} |h|^{p}  dv\leq \epsilon \int_{B_{T}} |b_{R_j}|^{p}  dv, \forall T>T_0.
\end{align}
Setting $Y_R(T):= \int_{B_T} (w_p(b_R,r)-\epsilon)|b_R|^{p}  dv$, we combine \eqref{coclpriceR} and \eqref{est1} to obtain 
for $T>T_0$,
\begin{align*}
 &Y_R(T)   \leq    \frac{(\frac{1}{p}-\mu_p(b_R,T))}{w_p(b_R,T)-\epsilon}\frac{dY_R(T)}{dT}.
\end{align*}
Integrate this differential inequality from $T_0$ to   $R$ to get 
\begin{align} \label{coclpriceint} 
&Y_R(R) \geq e^{\int_{T_0}^R\frac{w_p(b_R,T)-\epsilon}{(\frac{1}{p}-\mu_p(b_R,T))} dT }Y_R(T_0)\geq e^{(R-T_0)[\delta(n-k)-(p-1)(k-1)-p\epsilon] }Y_R(T_0).
\end{align}
On the other hand, since $h\in Ker(i^k_{p,q}),$ $\exists \beta_q\in L^q$ such that $d\beta_q = h$. Since $b_R$ is the $L_p(B_R)$ norm minimizing primitive for $h_{|B_R}$,
we have 
\begin{align}\label{contraest}
\|b_R\|_{L_p(B_R)}^p&\leq \|\beta_q\|_{L_p(B_R)}^p  \leq \|\beta_q\|_{L^q(M)}^{\frac{p}{q}}vol(B_R)^{\frac{q-p}{q}}\nonumber\\
&\leq \|\beta_q\|_{L^q(M)}^{\frac{p}{q}}vol(B_R)^{\frac{q-p}{q}} \leq c(n)\|\beta_q\|_{L^q(M)}^{\frac{p}{q}}e^{\frac{q-p}{q}R}.
\end{align} 
As $R\to \infty$, \eqref{contraest} and \eqref{coclpriceint} are mutually contradictory, unless $h=0$. Hence the kernel of $i_{p,q}^k$ is $\{0\}$, as claimed.  
\end{proof}

\section{Symmetric spaces}\label{symm}
The curvature pinching hypotheses in the preceding sections are somewhat coarse,  as they depend on the Rauch comparison theorem. In more homogeneous geometries, we can often compute Lie derivatives of useful vector fields explicitly and can therefore achieve sharper results. As illustration, we resolve part of a conjecture of Gromov. 

In this section, $G$ is a semisimple Lie group of noncompact type, $K$ is a maximal compact subgroup, and $X:= G/K$ is the associated symmetric space. Let $A$ be a maximal real split torus of $G$. (So $\text{rank}_{\IR}(G) = dim(A).$)  Let $\mathfrak{g}$,  $\mathfrak{k}$, and $\mathfrak{a}$ denote the Lie algebras of $G$, $K$ and $A$ respectively. Let $\mathfrak{g} = \mathfrak{p}\oplus\mathfrak{k}$ denote the Cartan decomposition of $\mathfrak{g}$ determined by $K$.

\begin{Conjecture}\label{conjG}(Gromov) 
$H^k_p(X)=0,$ for $1<p<\infty,$ and $k<\text{rank}_{\IR}(G).$
\end{Conjecture}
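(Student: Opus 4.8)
The plan is to establish $H^k_p(X)=0$ by proving the two equivalent vanishings $\mathcal{H}^k_p(X)=0$ and $T^k_p(X)=0$ for all $1<p<\infty$ and all $k<r:=\mathrm{rank}_{\IR}(G)$; this suffices because of the exact sequence $0\to T^k_p(X)\to H^k_p(X)\to H^k_{p,red}(X)\to 0$ together with the nonlinear Hodge isomorphism $H^k_{p,red}(X)\cong\mathcal{H}^k_p(X)$. Since Poincar\'e duality (Theorem \ref{delduality}, Proposition \ref{tduality}) trades degree $k$ at exponent $p$ for degree $n-k$ at the conjugate exponent and therefore does not preserve the hypothesis $k<r$, both vanishings must be proved directly; the reduced part goes through the monotonicity argument of Section \ref{monoto} and the torsion part through the $p$-coclosed-primitive argument of Theorem \ref{thmT}, each adapted to $X$ as follows.

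The geometric input is to replace the radial function of a geodesic ball by the Busemann function $\beta_\xi$ of a \emph{regular} point at infinity. Writing $\xi=\lim_{t\to\infty}\exp(tH)K$ with $H$ a regular unit vector of a closed chamber $\overline{\mathfrak{a}^+}$, the level sets of $\beta_\xi$ are horospheres whose shape operator $S_H:=\Hess(\beta_\xi)$ is $G$-equivariant, hence has the same spectrum at every point: eigenvalue $0$ on the $r$-dimensional tangent space to a maximal flat, and eigenvalue $\alpha(H)>0$ with multiplicity $m_\alpha=\dim\mathfrak{g}_\alpha$ on the subspace $\mathfrak{p}_\alpha$ dual to each positive restricted root $\alpha$; in particular $\tr S_H=\sum_{\alpha>0}m_\alpha\alpha(H)$ is a positive constant. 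Running the computation of Proposition \ref{pricey} with $\beta_\xi$ in place of $r$, over the horoball shells $\{R_2<\beta_\xi<R_1\}$, and using a coarea argument to discard the boundary terms as $R_1\to+\infty$ and $R_2\to-\infty$ (legitimate since $h\in L_p$), one arrives at the integrated Rellich-type identity
\[
\int_X\Big(\big\langle\hat h,\,S_H^{\wedge}\hat h\big\rangle-\tfrac1p\tr S_H\Big)|h|^p\,dv=0,\qquad \hat h:=h/|h|,
\]
for a $p$-harmonic $k$-form $h$, where $S_H^{\wedge}$ is the derivation extension of $S_H$ to $\wedge^k$; the same identity for the $p$-coclosed primitives $b_R$ of $h|_{B_R}$ from Theorem \ref{thmT} handles $T^k_p(X)$, with $k-1$ in place of $k$. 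Hence $h=0$ as soon as the bracket has a fixed sign along $h$; since $0\le\langle\hat h,S_H^{\wedge}\hat h\rangle\le S_k(H):=$ (sum of the $k$ largest eigenvalues of $S_H$), this is so when $p<\tr S_H/S_k(H)$, and optimizing $H\in\overline{\mathfrak{a}^+}$ gives the conjecture for $p$ in an initial interval of $(1,\infty)$ — for $G$ simple, the $1<p\le 2$ range of the theorem in the introduction.

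To reach all of $(1,\infty)$ I would use that $S_H$ annihilates the $r$-dimensional flat, together with $k<r$, by averaging the identity over the compact family $\{\beta_{k\xi_0}:k\in K\}$ of regular Busemann functions. Since $\tr S_{H_\xi}=\tr S_{H_0}$ for every $\xi$, Fubini turns the average into
\[
\int_X\Big(\big\langle\hat h,\,\overline{S^{\wedge}}\hat h\big\rangle-\tfrac1p\tr S_{H_0}\Big)|h|^p\,dv=0,\qquad \overline{S^{\wedge}}:=\int_K\mathrm{Ad}(k)\,S_{H_0}^{\wedge}\,\mathrm{Ad}(k)^{-1}\,dk,
\]
where $\overline{S^{\wedge}}$ is a $K$-equivariant symmetric operator on $\wedge^k\mathfrak{p}^*$ of trace $\binom{n-1}{k-1}\tr S_{H_0}$, hence of average eigenvalue $\tfrac kn\tr S_{H_0}$. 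When $\wedge^k\mathfrak{p}^*$ is $K$-irreducible, $\overline{S^{\wedge}}$ equals that scalar, the identity forces $h=0$ for every $p\ne n/k$, and a separate argument at the single exponent $p=n/k$ — using the strict slack left by $k<r$ — would finish the proof; this already settles the conjecture for $G$ of real rank $1$ and for $G=\mathrm{SL}_3(\IR)$.

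The main obstacle is that $\wedge^k\mathfrak{p}^*$ is not $K$-irreducible in general — already for $G=\mathrm{SL}_4(\IR)$ and $k=2<r=3$ it splits under $K=\mathrm{SO}(4)$ — so $\overline{S^{\wedge}}$ need not be scalar, its extreme eigenvalues $\lambda_{\max},\lambda_{\min}$ may straddle $\tfrac kn\tr S_{H_0}$, and the range actually covered degenerates to $(1,\tfrac{\tr S_{H_0}}{\lambda_{\max}})\cup(\tfrac{\tr S_{H_0}}{\lambda_{\min}},\infty)$, which exhausts $(1,\infty)$ only if $\lambda_{\max}=\lambda_{\min}$. Closing this gap is the crux: one must compute the spectrum of $\overline{S^{\wedge}}$ on each $K$-isotypic summand of $\wedge^k\mathfrak{p}^*$ for every simple $G$ and every $k<r$ and show the extreme eigenvalues agree — or that a cleverer, non-Haar weighting of the regular directions, or a $G$-equivariant replacement for $\beta_\xi$ with equal nonzero Hessian eigenvalues on $\wedge^k$, forces this — perhaps by using the finer structure of genuine $p$-harmonic forms on $X$ rather than of arbitrary $k$-covectors; and then one must pass from simple to semisimple $G$, where $\mathfrak{p}=\bigoplus_i\mathfrak{p}_i$ is visibly reducible and a product/Künneth-type reduction for $X=X_1\times\cdots\times X_m$ appears necessary. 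I expect this representation-theoretic step to be the real difficulty — which is exactly why the present methods reach the full conjecture comfortably only for $1<p\le 2$.
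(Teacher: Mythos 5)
First, note that the statement you are proving is stated in the paper as a \emph{conjecture}: the paper itself does not prove $H^k_p(X)=0$ for all $1<p<\infty$, but only the partial case $G$ simple, $1<p\le 2$ (Theorems \ref{ThmG} and \ref{clean}). Your proposal, to its credit, is honest about landing in essentially the same place. For the part that is actually provable by these methods, your route coincides with the paper's: your ``integrated Rellich-type identity'' along the gradient of a regular Busemann function is exactly the paper's identity \eqref{jet2}, since the shape operator of the horospheres for the direction $T\in\mathfrak{a}$ is $\nabla_{N_\beta}T=-\beta(T)N_\beta$, i.e.\ your $S_H^{\wedge}$ is $\sum_\beta\beta(T)e(\omega^\beta)e^*(\omega^\beta)$; your threshold $p<\tr S_H/S_k(H)$ is the paper's criterion \eqref{condo2}; and your treatment of $T^k_p$ via $p$-coclosed primitives is Theorem \ref{clean}. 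Two substantive things you elide: (i) the claim that optimizing over $H$ yields the full range $1<p\le 2$ for every simple $G$ and every $k<\mathrm{rank}_{\IR}(G)$ is not automatic --- the paper must run a case-by-case check over all restricted root systems, including the non-split forms (via Araki's tables), where root multiplicities and the occurrence of $2\beta$ as a root change the count; and (ii) the endpoint $p=2$ for the torsion statement in split types $A_n$ and $C_n$ is a genuine boundary case where the weight in the identity is only nonnegative, and the paper needs a separate Bochner-type argument \eqref{bndrycase}--\eqref{zoom2} to close it. The paper also restricts to $G$ simple precisely because of the product issue you flag at the end.

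For the part beyond $p\le 2$, your $K$-averaging idea has a flaw that bites even before the reducibility of $\wedge^k\mathfrak{p}^*$: the operator $\int_K S_{H_{k\xi_0}}(x)\,dk$ is $\mathrm{Ad}$-invariant under the stabilizer of the basepoint $o$, not under the stabilizer of a general point $x$. The $K_o$-orbit of $\xi_0$ in the boundary coincides as a set with the $K_x$-orbit, but the measure you are averaging against is the visual measure seen from $o$, and from $x\neq o$ these measures differ by a Poisson-kernel density. So $\overline{S^{\wedge}}$ is \emph{not} a constant ($G$-invariant) operator on $X$, Fubini does not produce an identity with a fixed scalar coefficient, and the conclusion ``$h=0$ for every $p\neq n/k$'' does not follow even in the $K$-irreducible case. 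Fixing this would require averaging against the family of visual measures simultaneously at all points (equivalently, finding a genuinely $G$-invariant convex exhaustion or vector field with equal nonzero Hessian eigenvalues), which is exactly the kind of structure whose absence is why the conjecture remains open; together with the isotypic-decomposition obstruction you already identify, this means the proposal establishes only the paper's partial result, not the conjecture.
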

 
 A choice of Weyl chamber determines a set $\Phi_G^+$ of positive roots of $G$ relative to $A$. When $G$ is not real split, extend $\mathfrak{a}$ to a Cartan subalgebra  $\mathfrak{h}$ of $\mathfrak{g}$, and let $\hat \Phi_G^+$ denote the set of roots of $\mathfrak{h}$  whose restriction to $\mathfrak{a}$ lie in $\Phi_G^+$. Let $N$ be the unipotent subgroup of $G$ whose Lie algebra $\mathfrak{n}$ is the direct sum of the root spaces with root in $\hat \Phi^+_G$. Let $\{\tilde N_\beta\}_{\beta\in \hat\Phi_G^+}$ be an orthonormal basis of $\mathfrak{n}$ with respect to the metric determined by the Killing form and the Cartan involution. Let $\{\tilde \omega_\beta\}_{\beta\in \hat\Phi_G^+}$ denote the dual basis. We identify elements of the Lie algebras of $A$ and $N$  with  left invariant vector fields on $A$, and $N$ respectively, in the usual fashion.   With this notation, we have the Iwasawa decomposition  $G=NAK$, and $X$ is isometric to $N\times A$ equipped with the metric 
$$g_X := da^2+\sum_{\beta\in \hat \Phi^+_G}a^{-2\beta}(\tilde\omega^\beta )^2,$$
where $da^2$ is a Euclidean metric on $A$. Orthonormalize our $TN$ subframe with respect to this metric, setting  $ N_\beta:= a^\beta \tilde N_\beta,$ and $\omega^\beta:= a^{-\beta}\tilde\omega^\beta.$

We now adapt our monotonicity machinery to the symmetric setting. 
Let $T$ be a unit vector field in the Lie algebra $\mathfrak{a}$ of $A$.    Then 
\begin{align}\nabla_{N_\beta}T = -\beta(T)N_\beta.
\end{align}
Hence we can write the Lie derivative with respect to $T$ acting on forms as 
\begin{align}
L_T = di_T+i_Td = \nabla_T -\sum_{\beta\in \hat\Phi^+_G} \beta(T)e(\omega^\beta)e^*(\omega^\beta).
\end{align}

\begin{align}\label{jet2}
0=\int_{X}\langle L_Th,|h|^{p-2}h\rangle dv =  \int_{X} \sum_{\beta\in \Phi^+_G} \beta(T)|h|^{p-2}(\frac{1}{p}|h|^2-|e^*(\omega^\beta)h|^2 )dv 
.
\end{align} 

\begin{proposition}\label{precursor}
If there exists a  unit vector $T\in\mathfrak{a}$   and some $\delta>0$, so that for all $k$-forms $h$,  
\begin{align}\label{condition}\sum_{\beta\in \hat\Phi^+_G} \beta(T) (\frac{1}{p}|h|^2-|e^*(\omega^\beta)h|^2 )\geq \delta |h|^2,
\end{align}
then $\mathcal{H}_p^k(X) = 0.$
\end{proposition}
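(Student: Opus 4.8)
The plan is to read the conclusion off directly by pairing the pointwise algebraic inequality \eqref{condition} against the integral identity \eqref{jet2}. So let $h\in\mathcal H^k_p(X)$ be arbitrary; the goal is to force $h\equiv0$, which then gives $\mathcal H^k_p(X)=0$ and, via the nonlinear Hodge isomorphism of Section~2, $H^k_{p,\mathrm{red}}(X)=0$.

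First I would make sure \eqref{jet2} is legitimately available for this $h$. This is exactly where $p$-harmonicity enters: since $h$ is closed, $L_Th=d\,i_Th$, so $\langle L_Th,|h|^{p-2}h\rangle=\langle d\,i_Th,|h|^{p-2}h\rangle$; integrating by parts against cutoffs $\chi_R$ (with $\chi_R\equiv1$ on $B_R$ and $|d\chi_R|\le C/R$) and using $d^*(|h|^{p-2}h)=0$, the bulk term drops and the annulus error is bounded by $\tfrac{C}{R}\int_{B_{2R}\setminus B_R}|i_Th|\,|h|^{p-1}\,dv\le\tfrac{C}{R}\int_{B_{2R}\setminus B_R}|h|^p\,dv\to0$, since $|h|\in L_p$ by the very definition of $\mathcal H^k_p$. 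Expanding $L_T=\nabla_T-\sum_{\beta}\beta(T)e(\omega^\beta)e^*(\omega^\beta)$, inserting $\langle\nabla_Th,|h|^{p-2}h\rangle=\tfrac1pT(|h|^p)$, and using $\mathrm{div}\,T=-\sum_\beta\beta(T)$ (equivalently $L_T(dv)=-(\sum_\beta\beta(T))\,dv$, which follows from $\nabla_{N_\beta}T=-\beta(T)N_\beta$), the identity collapses to the displayed \eqref{jet2}; in the write-up one can of course simply cite \eqref{jet2}.

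Next I would apply \eqref{condition} to the $k$-covector $h(x)$ at each $x\in X$ and multiply the resulting inequality by $|h(x)|^{p-2}\ge0$, obtaining the pointwise bound
\[
\sum_{\beta\in\hat\Phi^+_G}\beta(T)\,|h|^{p-2}\!\left(\tfrac1p|h|^2-|e^*(\omega^\beta)h|^2\right)\ \ge\ \delta\,|h|^p .
\]
There is no difficulty at the zero set of $h$, even when $p<2$: every term on the left is dominated in modulus by $|h|^p$, so both sides extend continuously by $0$ there. The left-hand side is, term by term, precisely the integrand occurring in \eqref{jet2}, whose integral over $X$ is $0$; integrating the displayed inequality therefore gives $0\ge\delta\int_X|h|^p\,dv\ge0$, so $\|h\|_{L_p(X)}=0$ and $h\equiv0$.

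I do not anticipate a real obstacle inside this proposition — it is the soft repackaging step. The only points that need a word of care are the justification of the integration by parts underlying \eqref{jet2} (the same cutoff/exhaustion argument used repeatedly in Sections~2--6, requiring only $|h|\in L_p$) and keeping the index set of the root sum consistent between \eqref{jet2} and \eqref{condition}. The substantive difficulty is entirely deferred to the following sections: exhibiting a concrete unit vector $T\in\mathfrak a$ and a positive $\delta$ for which \eqref{condition} holds on all $k$-forms when $k<\text{rank}_{\IR}(G)$ and $p$ is in the claimed range.
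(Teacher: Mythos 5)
Your proposal is correct and follows essentially the same route as the paper: multiply the pointwise inequality \eqref{condition} by $|h|^{p-2}$, integrate over $X$, and invoke \eqref{jet2} to conclude $\delta\int_X|h|^p\,dv\le 0$, hence $h\equiv 0$. Your added care about justifying \eqref{jet2} via a cutoff/exhaustion argument (using only $|h|\in L_p$) and about the behavior of $|h|^{p-2}$ at the zero set for $p<2$ is implicit in the paper but sensible to spell out.
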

\begin{proof}
Suppose $T$ satisfying the hypotheses exists. Let $h$ be a $p$-harmonic $k$-form. By \eqref{jet2}, we have  
\begin{align}
\delta\int_{X} |h|^{p} dv\leq  \int_{X} \sum_{\beta\in \hat\Phi^+_G} \beta(T)|h|^{p-2}(\frac{1}{p}|h|^2-|e^*(\omega^\beta)h|^2 )dv =0.
\end{align}
 Hence $h=0$. 
\end{proof}

The preceding proposition can be used to prove many vanishing theorems. We explore its application to Conjecture \ref{conjG}.  
\begin{theorem}\label{ThmG} 
Assume $G$ is simple. Then 
$\mathcal{H}_p^k(X) = 0$, for $k<\text{rank}_{\IR}(G)$ and $1<p\leq 2$. 
\end{theorem}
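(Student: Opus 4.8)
The plan is to invoke Proposition \ref{precursor}: it suffices to produce a single unit vector $T\in\mathfrak{a}$ and a constant $\delta>0$ such that the quadratic-form inequality \eqref{condition} holds on all $k$-forms for every $k<\text{rank}_{\IR}(G)$ and every $p\in(1,2]$. Fix an orthonormal coframe adapted to the root space decomposition, so that a unit $k$-form $h$ determines, for each $\beta\in\hat\Phi_G^+$, the number $t_\beta:=|e^*(\omega^\beta)h|^2\in[0,1]$. Writing $m:=|\hat\Phi_G^+|=\frac12(\dim X - \text{rank}_{\IR}(G))$, these satisfy $0\le t_\beta\le 1$ and $\sum_\beta t_\beta\le k$ (since $h$ is a $k$-form and there are also the $\mathfrak a$-directions to absorb indices). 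The left side of \eqref{condition}, after normalizing $|h|=1$, becomes $\sum_\beta\beta(T)\bigl(\tfrac1p - t_\beta\bigr)$, and I want to bound this below by a positive constant independent of the admissible vector $(t_\beta)$.

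First I would choose $T$ to be the unit vector in the direction of $\rho^\vee$ — or more precisely in the direction of $\sum_{\beta\in\hat\Phi_G^+}\beta\in\mathfrak a^*$, transported to $\mathfrak a$ by the Killing form; since $G$ is simple, $\beta(T)>0$ for every positive root, so set $c:=\min_{\beta}\beta(T)>0$ and $C:=\max_\beta\beta(T)$. The worst case for the sum $\sum_\beta\beta(T)(\tfrac1p-t_\beta)$ is to put as much $t$-mass as possible (weighted by the largest $\beta(T)$) on the roots, i.e. to take $t_\beta=1$ on a set $S$ of roots carrying the top weights until the budget $\sum t_\beta\le k$ is exhausted, and $t_\beta=0$ elsewhere; then the sum is at least $\tfrac{1}{p}\sum_\beta\beta(T) - \max_{|S|\le k}\sum_{\beta\in S}\beta(T)\ge \tfrac1p\,m\,c - k\,C$ in the crudest form, but I expect the genuinely useful estimate to use $p\le 2$ to write $\tfrac1p\ge\tfrac12$ and then to exploit that $k<\text{rank}_{\IR}(G)$ is \emph{small} relative to $m$ and to the structure of the roots. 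The cleanest route is probably: since each $t_\beta\le 1$ and $\sum_\beta(\tfrac1p - t_\beta)=\tfrac{m}{p}-\sum_\beta t_\beta\ge\tfrac{m}{p}-k\ge\tfrac m2 - k$, and for $p\le 2$ each individual term $\tfrac1p - t_\beta\ge -\tfrac12$, so $\sum_\beta\beta(T)(\tfrac1p-t_\beta)\ge c\sum_\beta(\tfrac1p-t_\beta) - (C-c)\!\!\sum_{\beta:\,t_\beta>1/p}\!\!(t_\beta-\tfrac1p)$, and the correction sum is controlled by $\sum_\beta t_\beta\le k$. Choosing the normalization of $T$ and using $k<\text{rank}_{\IR}(G)$ together with a dimension count relating $m$, $k$, and $\text{rank}_{\IR}(G)$ for simple $G$ should yield a strictly positive lower bound $\delta$.

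\textbf{Main obstacle.} The delicate point is that a purely ``soft'' count — $\sum_\beta t_\beta\le k$, $t_\beta\le1$, $\beta(T)$ between $c$ and $C$ — may not by itself be strong enough to beat the ratio $C/c$, which can be large (roots of very different lengths / multiplicities in the restricted root system, e.g. rank-one factors like $\mathfrak{su}(n,1)$ where the root $\alpha$ and $2\alpha$ both occur). So the hard part will be to choose $T$ and to organize the estimate so that the roots with large $\beta(T)$ are exactly the ones constrained by $k<\text{rank}_{\IR}(G)$: concretely, picking $T$ generic in $\mathfrak a$ and using that for a $k$-form with $k<\dim\mathfrak a$ at least one $\mathfrak a$-index is ``missing,'' one can show the $t_\beta$ cannot all be concentrated on a single ``high-$\beta(T)$'' Weyl chamber wall. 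Carrying out this chamber/weight bookkeeping — and in the non-split case keeping track of restricted-root multiplicities $\dim\mathfrak g_\alpha$ when passing from $\hat\Phi_G^+$ to $\Phi_G^+$ — is where the real work lies; the appeal to Proposition \ref{precursor} and the passage from $\mathcal H_p^k(X)=0$ to $H^k_p(X)=0$ (via the Hodge isomorphism $H^k_{p,\mathrm{red}}\cong\mathcal H^k_p$ together with the vanishing of $T^k_p(X)$, which follows from the dual-exponent torsion theorem and $p'\ge2$) are then formal.
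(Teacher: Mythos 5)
You correctly identify Proposition \ref{precursor} as the engine and correctly reduce the problem to producing a unit $T\in\fa$ for which the pointwise inequality \eqref{condition} holds. But there is a genuine gap in the rest, and you half-acknowledge it yourself.

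The choice of $T$ is the crux, and your suggestion --- $T\propto\rho^\vee$ (or $\sum_\beta\beta$ transported to $\fa$) --- is not what makes the estimate tractable. With $\rho^\vee$, the values $\beta(T)$ spread over all root ``heights,'' so the ratio $C/c$ you introduce grows with the rank, and the crude bound $\tfrac1p m c - k C$ is negative already for $A_n$, $n\ge 2$. The paper instead takes $T$ metrically dual to $\mu/|\mu|$, where $\mu$ is the \emph{maximal} root. This has the decisive feature that $\beta(T)\in\{0,\tfrac{|\mu|}{2},|\mu|\}$ for every positive root, with the top value $|\mu|$ achieved by very few roots (exactly one, $\mu$ itself, in the real-split case). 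So the sum $\sum_\beta\beta(T)(\tfrac1p-|e^*(\omega^\beta)\tfrac{h}{|h|}|^2)$ collapses to a two-parameter count: $n_G(1)$ roots at height $1$ and $n_G(2)$ at height $2$, giving the concrete criterion $p<\tfrac{n_G(1)+2n_G(2)}{k+\min\{k,n_G(2)\}}$. The actual content of the proof is then an exhaustive verification --- over $A_n,B_n,C_n,D_n,G_2,F_4,E_6,E_7,E_8$, and then separately over the non-split restricted root systems using Araki's tables to control the multiplicities and the $(n_G(1),n_G(2))$ counts --- that this criterion is met for $k<\text{rank}_\IR(G)$ and $p<2$. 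You do not attempt this case analysis, and your proposed ``soft'' inequality $\sum_\beta t_\beta\le k$ together with $c\le\beta(T)\le C$ is not sharp enough to avoid it; the ``chamber/weight bookkeeping with a missing $\fa$-index'' you sketch as a remedy is not the mechanism that works here (the paper never uses that the form is deficient in $\fa$-directions; it uses that there are few high-weight roots).

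Two further points. First, the strict inequalities produced by this method give $p<\text{(bound)}$, and for $k+1=\text{rank}_\IR(G)$ in several root systems the bound is exactly $2$; so the endpoint $p=2$ is \emph{not} covered by the positivity argument and must be handled separately, which the paper does by citing the known vanishing of $L^2$-cohomology of $X$ outside the middle degree (Borel, Olbrich). Your proposal does not address this. Second, your closing remark about passing from $\sH_p^k(X)=0$ to $H_p^k(X)=0$ is not needed here: the statement of Theorem \ref{ThmG} only asserts $\sH_p^k(X)=0$. (The full $H^k_p$-vanishing of the theorem in the introduction does additionally require the torsion statement, which is the separate Theorem \ref{clean}.)
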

\begin{proof}
The vanishing of $\mathcal{H}_2^k(X)$, for $k\not = \frac{\text{dim}(X)}{2}$ is well known. See \cite{Bor85}, and more recently, see  for example \cite{Olb02}. So, we are left to treat the case $p<2$. 
First assume  $G$ is real split.  Let $\mu$ denote the maximal root in $\Phi^+_G$. 
Let $T= T_G$ be the vector in $\mathfrak{a}$ which is metrically dual to $\frac{\mu}{|\mu|}$. Then for $\beta\in \Phi^+_G$, 
$\beta(T)\in\{0,\frac{|\mu|}{2},|\mu|\}$, with $|\mu|$ only achieved for $\beta = \mu$. 
Let $n_G(1):=|\{\beta\in\Phi^+_G:\beta(T) = 1\}|,$ and  $n_G(2):=|\{\beta\in\Phi^+_G:\beta(T) = 2\}|.$ Then $n_G(2) = 1$, for $G$ real split. Hence  
\begin{align}\label{roots1}\sum_{\beta\in\Phi^+_G} \frac{\beta(T)}{p}|h|^2 = \frac{(n_G(1)+2)|\mu|}{2p} |h|^2.\end{align}
 On the other hand,  
\begin{align}\label{roots2}\sum_{\beta\in \Phi^+_G}  \beta(T) |e^*(\omega^\beta)h|^2 \leq  (k+1)\frac{|\mu|}{2}|h|^2.
\end{align}
Hence condition \eqref{condition} is satisfied if $\frac{(n_G(1)+2) }{ p} >  (k+1) .$ In other words, if 
\begin{align}\label{concrete}
p<\frac{n_G(1)+2}{k+1}.
\end{align}

Case 1) For the $A_n$ root system, in the usual notation,  $\Phi^+_G$ is given by $\lambda_i-\lambda_j$, $1\leq i <j\leq n+1$,  and $\mu = \lambda_{1}-\lambda_{n+1}$. Hence $n_G(1) = 2n-2$ and condition \eqref{condition} holds for 
\begin{align}\label{A_n}
p<\frac{2n}{k+1} . 
\end{align}  
Case 2) For the $B_n$ root system, $n\geq 2$,  $\Phi^+_G$ is given by $\lambda_i\pm \lambda_j$, $1\leq i <j\leq n$, and $\lambda_j$, $1\leq j\leq n$, and  $\mu = \lambda_1+\lambda_2$. Then $n_G(1) = 4n-6$, and condition \eqref{condition} holds for 
\begin{align}
p<\frac{4n-4}{k+1} . 
\end{align}  
Case 3) For the $C_n$ root system, $n\geq 2$,  $\Phi^+_G$ is given by $\lambda_i\pm \lambda_j$, $1\leq i <j\leq n$, and $2\lambda_j$, $1\leq j\leq n$, and   $\mu = 2 \lambda_1 $. Then $n_G(1)=2n-2$, and condition \eqref{condition} holds for 
\begin{align}
p<\frac{2n}{k+1} . 
\end{align}  
Case 4) For the $D_n$ root system, $n\geq 4$,  the roots of $N$ are given by $\lambda_i\pm \lambda_j$, $1\leq i <j\leq n$,  and $\mu = \lambda_1+\lambda_2$. Then $n_G(1) = 4n-8$, and condition \eqref{condition} holds for 
\begin{align}
p<\frac{4n-6}{k+1} . 
\end{align}  
Case 5) For $G=G_2,$ $n_G(1) = 4$, and condition \eqref{condition} holds for 
\begin{align}
p<\frac{6}{k+1} . 
\end{align}
Case 6) For $G=F_4,$ $n_G(1) = 14$, and condition \eqref{condition} holds for 
\begin{align}
p<\frac{16}{k+1} . 
\end{align}
Case 7) For $G=E_6,$ $n_G(1) = 20$, and condition \eqref{condition} holds for 
\begin{align}
p<\frac{22}{k+1} . 
\end{align}
Case 8) For $G=E_7,$ $n_G(1) = 32$, and condition \eqref{condition} holds for 
\begin{align}
p<\frac{34}{k+1} . 
\end{align}
Case 9) For $G=E_8,$ $n_G(1) = 56$, and condition \eqref{condition} holds for 
\begin{align}
p<\frac{58}{k+1} . 
\end{align}
For $k+1\leq \text{rank}_{\IR}(G)$, condition \eqref{condition} thus holds in all cases for $p<2$. 

When $G$ is simple but not real split, the restricted root system is one of the three classical systems of type $A,B,$ or $C$, or the exceptional system $F_4$, but the multiplicity of the roots can increase, and for some short roots $\beta$, $2\beta$ may occur as a root. See \cite{Ara62}, especially pp. 32-33, for this and subsequent properties of restricted root systems.  
Then \eqref{roots1} and \eqref{roots2} become 
\begin{align}\label{roots11}\sum_{\beta\in\hat\Phi^+_G} \frac{\beta(T)}{p}|h|^2 = \frac{(n_G(1)+2n_G(2))|\mu|}{2p} |h|^2,\end{align}
 and
\begin{align}\label{roots22}\sum_{\beta\in \hat\Phi^+_G}  \beta(T) |e^*(\omega^\beta)h|^2 \leq  (k+\min\{k,n_G(2)\})\frac{|\mu|}{2}|h|^2 \leq  2k\frac{|\mu|}{2}|h|^2.
\end{align}
When $n_G(2)>1$, we often employ the simplified vanishing criterion
\begin{align}\label{condo1}
p<\frac{n_G(1)+2n_G(2) }{2k} 
\end{align}
instead of the sharper vanishing condition:
\begin{align}\label{condo2}
p<\frac{n_G(1)+2n_G(2) }{ k+\min\{k,n_G(2)\}} .
\end{align}
For simpler comparison with our real split computations, let $n_{A_n}(j), n_{B_n}(j),n_{C_n}(j) $, $j=1,2$ denote $n_G(j)$ for $G$ a real split simple Lie group of type $A_n,B_n$, or $C_n$, respectively. If $G$ has restricted root system of type $A_n$, $n>1$,  and is not real split, then $n_G(2) \geq 2$, and $n_G(1)\geq 2n_{A_n}(1)= 4n-4.$ 
Hence condition \ref{condo1} holds for 
$$p<  \frac{2n}{k}.$$
If $G$ has restricted root system of type $B_n$, $n>1$,  and is not real split, then \ref{condo1} holds for $p<2$, since it holds in the real split case, and $n_{B_n}(1)+2n_{B_n}(2) \leq n_G(1)+2n_G(2).$ Similarly, \ref{condo1} holds for $p<2$, for all restricted root sytems of type $F_4$. 
So, we are left to consider the $C_n$ case. Araki lists 4 different restricted root systems of type $C_n$, with 
\begin{enumerate}
\item $(n_G(1),n_G(2)) = (2n_{C_n}(1),1) =(4n-4,1)$, and $\frac{n_G(1)+2n_G(2) }{k+1} = \frac{4n-2  }{k+1}$.
\item $(n_G(1),n_G(2)) = (4n_{C_n}(1),3) =(8n-8,3)$, and $\frac{n_G(1)+2n_G(2) }{2k} = \frac{8n-2}{2k}$.
\item $(n_G(1),n_G(2)) = (4n_{C_n}(1),1) =(8n-8,1)$, and $\frac{n_G(1)+2n_G(2) }{k+1} = \frac{8n-6}{k+1}$.
\item $(n_G(1),n_G(2)) = (8n_{C_n}(1),1) =(16n-16,1)$, and $\frac{n_G(1)+2n_G(2) }{k+1} = \frac{16n-14}{k+1}$. 
\end{enumerate}
In all these $C_n$ instances, the vanishing criterion holds for $p<2$, if $k<n$. 
\end{proof}
The vanishing of the torsion part of the cohomology can be proved in a similar fashion. 
\begin{theorem}\label{clean}
Let $G$ be a simple Lie group.  Then 
$T_p^k(X) = 0$, for $p< \frac{n_G(2)+2n_G(1)}{k-1 +\min\{k-1,n_G(2)\}}$. In particular, $T_p^k(X) = 0$, for $k\leq \text{rank}_{\IR}(G)$ and $p\leq 2$ 
\end{theorem}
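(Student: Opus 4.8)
The plan is to mirror the proof of Theorem~\ref{thmT} (the torsion vanishing for pinched negative curvature), but now feeding in the exact Lie derivative identity available on the symmetric space rather than the Rauch comparison estimate. Let $\{z_j\}\subset B_p^k(X)$ be an $L_p$ Cauchy sequence with $z_j\stackrel{L_p}{\to}z\in Z_p^k(X)$, and let $b_j$ be the $p$-coclosed primitive of $z_j$ furnished by Lemma~\ref{prelim}, so $db_j=z_j$. As in Section~\ref{symm}, fix the unit vector $T=T_G\in\mathfrak{a}$ dual to $\mu/|\mu|$, and use
\begin{align*}
L_T = di_T + i_Td = \nabla_T - \sum_{\beta\in\hat\Phi^+_G}\beta(T)\,e(\omega^\beta)e^*(\omega^\beta).
\end{align*}
Since $b_j$ is a $(k-1)$-form and $db_j=z_j$, pairing $L_Tb_j=\{d,i_T\}b_j$ against $|b_j|^{p-2}b_j$ over $X$ and integrating by parts (exactly as in the derivation of \eqref{coclprice}, but with $i_T=e^*(\omega^T)$ in place of $e^*(dr)$, noting $T$ is a Killing field so there is no $\Delta r$ term) gives
\begin{align*}
\int_X\Big(\sum_{\beta\in\hat\Phi^+_G}\beta(T)\big(\tfrac{1}{p}|b_j|^2-|e^*(\omega^\beta)b_j|^2\big)\Big)|b_j|^{p-2}\,dv = -\int_X\langle i_Tz_j,\,|b_j|^{p-2}b_j\rangle\,dv .
\end{align*}

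Next I would invoke the pointwise bound that drives Theorem~\ref{ThmG}: under the stated hypothesis $p<\frac{n_G(2)+2n_G(1)}{k-1+\min\{k-1,n_G(2)\}}$, applied to $(k-1)$-forms (since $b_j$ has degree $k-1$), the bracketed integrand is bounded below by $\delta|b_j|^2$ for some $\delta>0$ — this is precisely condition \eqref{condition} with the counting \eqref{roots11}, \eqref{roots22} done at form-degree $k-1$, which is where the $k-1$ and $\min\{k-1,n_G(2)\}$ enter the threshold. Hence $\delta\|b_j\|_{L_p}^p\le \big|\int_X\langle i_Tz_j,|b_j|^{p-2}b_j\rangle dv\big|\le \|z_j\|_{L_p}\|b_j\|_{L_p}^{p-1}$ by H\"older (using $|i_Tz_j|\le|z_j|$), so $\|b_j\|_{L_p}\le\delta^{-1}\|z_j\|_{L_p}$. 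Since $\{z_j\}$ is Cauchy it is bounded, so $\{b_j\}$ is $L_p$-bounded; pass to a weakly convergent subsequence $b_j\rightharpoonup b$, and since $d$ commutes with weak limits, $db=z$, i.e. $z\in B_p^k(X)$. Therefore $\bar B_p^k(X)=B_p^k(X)$, that is $T_p^k(X)=0$. The ``in particular'' clause follows from the arithmetic already carried out case-by-case in the proof of Theorem~\ref{ThmG}: for $k-1\le\text{rank}_{\IR}(G)-1$ the inequality $p<\frac{n_G(2)+2n_G(1)}{k-1+\min\{k-1,n_G(2)\}}$ holds for all $p\le 2$, since the same root-counting bounds that gave the $p<2$ threshold at degree $k$ when $k<\text{rank}_{\IR}(G)$ give it at degree $k-1$ when $k\le\text{rank}_{\IR}(G)$.

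The one point requiring care — and the main obstacle — is legitimizing the integration by parts over the noncompact $X$: the boundary term is an integral over a geodesic sphere $S_R$ of $(\tfrac1p-\mu_p(b_j,R))|b_j|^p$, and one must argue this tends to zero along a suitable sequence $R_j\to\infty$. Since $b_j\in L_p(X)$ and $i_Tz_j\in L_p$, the two volume integrals above converge absolutely, which forces the existence of such a sequence; alternatively one runs the computation on the ball $B_R$ as in \eqref{coclprice} and takes $R\to\infty$, exactly as in the proof of Theorem~\ref{thmT}. The other routine check is that the Killing field $T$ is complete and the coframe identity $\nabla_{N_\beta}T=-\beta(T)N_\beta$ is uniform in the Iwasawa coordinates, so the lower bound $\delta$ is a genuine constant independent of $j$; both are immediate from the explicit model $g_X=da^2+\sum_\beta a^{-2\beta}(\tilde\omega^\beta)^2$.
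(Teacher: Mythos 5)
Your argument for the first sentence of the theorem is correct and is essentially the paper's: you pair $L_{T_G}b_j$ against $|b_j|^{p-2}b_j$, use the Iwasawa Lie derivative identity and the $p$-coclosedness of $b_j$ to produce the integral identity \eqref{coclprice2}, invoke condition \eqref{condo2} at form degree $k-1$ to get the uniform lower bound $\delta>0$, deduce $\|b_j\|_{L_p}\le\delta^{-1}\|z_j\|_{L_p}$, and close via a weak limit. (A minor typographical note: the numerator in the statement should read $n_G(1)+2n_G(2)$ rather than $n_G(2)+2n_G(1)$, as is forced by \eqref{roots11}--\eqref{roots22}; your proof actually uses the correct form through \eqref{condo2}.)

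The gap is in the ``in particular'' clause. You assert that for $k\leq\text{rank}_{\IR}(G)$ the threshold inequality holds for all $p\le 2$, ``since the same root-counting bounds\ldots give it at degree $k-1$.'' This is false at the boundary. For real split $A_n$, one has $n_G(1)=2n-2$, $n_G(2)=1$, so at $k=n=\text{rank}_{\IR}(G)$ the threshold is $\frac{(2n-2)+2}{(n-1)+1}=2$ exactly, and similarly for real split $C_n$ (and $B_2\cong C_2$). So the strict inequality $p<\frac{n_G(1)+2n_G(2)}{k-1+\min\{k-1,n_G(2)\}}$ excludes $p=2$, and the lower bound $\delta$ degenerates to $0$: the pointwise estimate only yields $\sum_\beta\beta(T_G)(\tfrac12|b_j|^2-|e^*(\omega^\beta)b_j|^2)\geq 0$, with equality possible, which gives no $L_2$ control on $b_j$ by $z_j$. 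The paper recognizes this and devotes the second (and longer) half of the proof to a separate argument for $p=2$ in the real split $A_n$ and $C_n$ cases: it computes the vanishing locus of the quadratic form in \eqref{bndrycase} (forms $\phi=\omega^\mu\wedge\phi_1$ annihilated by $e^*(dt^j)$ and by $e^*(\omega^\alpha)$ with $\alpha(T_G)=0$), decomposes $d=d_A+d_N$, $d^*=d_A^*+d_N^*$, and runs a Bochner-type expansion of $\|db_m\|^2+\|d^*b_m\|^2$ that, combined with \eqref{coclprice4}, eventually produces $\|b_m\|_{L^2}\le\hat c_n\|z_m\|_{L^2}$. Your proposal omits this entirely, so as written it only establishes the first sentence of the theorem, not the $p=2$ endpoint in the ``in particular'' clause.

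Two smaller remarks: your worry about legitimizing the integration by parts over noncompact $X$ is a fair one, but you should be careful that $T_G$ is a unit Killing field on $X$, not the radial field of a geodesic ball, so the exhaustion is by sublevel sets of the height function, not geodesic spheres; the argument is routine given $b_j,z_j\in L_p$, but the boundary terms should be described in the corresponding coordinates. Also, at the boundary case $p=2$, $A_n/C_n$, the passage from the $L^2$ bound $\|b_m\|_{L^2}\lesssim\|z_m\|_{L^2}$ to closedness of $B_2^k$ is the same weak-limit step you already have.
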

\begin{proof}
Let $T_G$ be the vector field defined in the proof of Theorem \ref{ThmG}. Let  $\{z_j\}_{j=1}^\infty\subset B_p^k(X)$ be an $L_p$ Cauchy sequence, 
with $z_j\stackrel{L_p}{\to }z\in Z_p^k(X).$ Let $ b_j$ denote   the  $p$-coclosed primitive of   $z_j$. Then arguing as in Section 6, with $T_G$ replacing the radial vector field, we have 
\begin{align}\label{coclprice2} 
&\int_{X} \sum_{\beta\in \Phi^+_G} \beta(T_G)|b_j|^{p-2}(\frac{1}{p}|b_j|^2-|e^*(\omega^\beta)b_j|^2 )dv  =   -\int_{X}\langle  i_{T_G}z_j,|b_j|^{p-2}b_j\rangle dv.
\end{align}
Hence $\|b_j\|_{L_p}\leq \frac{|T_G|}{\delta}\|z_j\|_{L_p},$ when 
\begin{align}\label{condo3}\sum_{\beta\in \Phi^+_G} \beta(T_G)|b_j|^{p-2}(\frac{1}{p}|b_j|^2-|e^*(\omega^\beta)b_j|^2)\geq \delta|b_j|^2, \text{ for some  } \delta>0.
\end{align}  Arguing again as in Section 6, we deduce that $\{b_j\}_j$ is uniformly bounded in $L^p$ and therefore admits a subsequence that converges to a primitive for $z$. Hence $z$ is exact, and therefore 
 $ B_p^k(X)$ is closed. By \eqref{condo2}, condition \eqref{condo3} holds when  
$p<\frac{n_G(2)+2n_G(1)}{k-1 +\min\{k-1,n_G(2)\}}$. As we have recorded in the proof of Theorem \ref{ThmG}, when $k\leq rank_{\IR}G$, $p<\frac{n_G(2)+2n_G(1)}{k-1 +\min\{k-1,n_G(2)\}}$ holds for $p\leq 2$, for all simple Lie groups, except when $p=2$ for the real split Lie groups of type $A_n$ and $C_n$.  These can be handled by a Bochner type argument, which we include below. We treat the $A_n$ case. The $C_n$ case can be treated in the same manner. 

Consider an $(n-1)$-form $\phi$ on $X = SL_{n+1}(\IR)/SO(n+1).$  Let $t^j$ denote Euclidean coordinates on $A$, and let $\mu$ still denote the maximal root in $\Phi^+_{SL_{n+1}}$. Write 
\begin{align}\label{bndrycase}&\sum_{\beta\in \Phi^+_G} \beta(T_G) (\frac{1}{2}|\phi|^2-|e^*(\omega^\beta)\phi|^2) \nonumber\\
&= \frac{n_G(1)+2}{2}|\phi|^2-|e^*(\omega^\mu)\phi|^2-(n-1)|\phi|^2+
\sum_{\alpha:\alpha(T_G)=0}|e^*(\omega^\alpha)\phi|^2+ \sum_j|e^*(dt^j)\phi|^2\nonumber\\
&=  |\omega^\mu\wedge\phi|^2 + \sum_{\alpha:\alpha(T_G)=0}|e^*(\omega^\alpha)\phi|^2+ \sum_k|e^*(dt^k)\phi|^2,
\end{align}
which vanishes  only if $\phi = \omega^\mu\wedge \phi_1,$ where $\phi_1$ is annihilated by all $e^*(dt^j)$ and all $e^*(\omega^\alpha)$, with $\alpha(T_G) = 0$. Combining \eqref{bndrycase} with the $p=2$ case of \eqref{coclprice2} yields 
\begin{align}\label{coclprice3} 
&\int_{X} \sum_k|e^*(dt^k)b_j|^2dv  =   -2\int_{X}\langle  i_{T_G}z_j,b_j\rangle dv.
\end{align}
Hence 
\begin{align}\label{coclprice4} 
& \sum_k\|e^*(dt^k)b_j\|^2_{L^2}  \leq 2\|z_j\|_{L^2}\|b_j\|_{L^2}.
\end{align}
We will combine this with an estimate which is good when $\|e^*(dt^j)\phi\|^2_{L^2}<\epsilon^2\| \phi\|^2_{L^2}, $ some small $\epsilon$, in order to obtain estimates for the primitives, $b_j$. Expand 
\begin{align}
\nabla_{N_\beta} =:\nabla_{N_\beta}^0-\beta(\frac{\p}{\p t^j})(e(\omega^\beta)e^*(dt^j)-e(dt^j)e^*(\omega^\beta)),
\end{align}
where $\nabla_{N_\beta}^0$ maps the smooth span of $\{dt^j\}_j$ to itself and the smooth span of $\{\omega^\beta\}_\beta$ to itself. 
Write 
\begin{align}
d = e(dt^j)(\nabla_{\frac{\p}{\p t^j}}-\sum_{\beta\in \hat\Phi_G^+}\beta(\frac{\p}{\p t^j})e(\omega^\beta)e^*(\omega^\beta))+ e(\omega^\beta)\nabla_{N_\beta}^0 =: d_A+d_N,
\end{align}
and 
\begin{align}d^*= -e^*(dt^j)(\nabla_{\frac{\p}{\p t^j}}-\sum_{\beta\in \hat\Phi_G^+}\beta(\frac{\p}{\p t^j})e^*(\omega^\beta)e(\omega^\beta))- e(\omega^\beta)^*\nabla_{N_\beta}^0 =: d_A^*+d_N^*.
\end{align}
Then
\begin{align}\{d_A^*,d_N\} &= \{-e^*(dt^j)(\nabla_{\frac{\p}{\p t^j}}-\sum_{\beta\in \hat\Phi_G^+}\beta(\frac{\p}{\p t^j})e^*(\omega^\beta)e(\omega^\beta)),e(\omega^\alpha)\nabla_{N_\alpha}^0\}\nonumber\\
 &= -\sum_{\beta\in \hat\Phi_G^+}e^*(dt^j) \beta(\frac{\p}{\p t^j}) e(\omega^\beta)\nabla_{N_\beta}^0.
\end{align}
Expand
\begin{align*}
&\|d\phi\|^2_{L^2}+\|d^*\phi\|^2_{L^2}= \|d_A\phi+d_N\phi\|^2_{L^2}+\|d_A^*\phi+d_N^*\phi\|^2_{L^2}\nonumber\\
=&
 \|d_A\phi\|^2_{L^2}+\|d_N\phi\|^2_{L^2}+2\langle \phi,\{d_A^*,d_N\}\phi\rangle_{L^2}+\| d_A^*\phi\|^2_{L^2}+\| d_N^*\phi\|^2_{L^2}\nonumber\\
= &
 \|e(dt^j)(\nabla_{\frac{\p}{\p t^j}}-\sum_{\beta\in \hat\Phi_G^+}\beta(\frac{\p}{\p t^j})e(\omega^\beta)e^*(\omega^\beta))\phi\|^2_{L^2}+\| e^*(dt^j)(\nabla_{\frac{\p}{\p t^j}}-\sum_{\beta\in \hat\Phi_G^+}\beta(\frac{\p}{\p t^j})e^*(\omega^\beta)e(\omega^\beta))\phi\|^2_{L^2}\nonumber\\
+&\|d_N\phi\|^2_{L^2}-2\sum_{\beta\in \hat\Phi_G^+}\beta(\frac{\p}{\p t^j})\langle  \nabla_{N_\beta}^0 e^*(\omega^\beta)\phi,  e^*(dt^j)\phi\rangle_{L^2}+\| d_N^*\phi\|^2_{L^2}\nonumber\\
=&
 \sum_j\| \nabla_{\frac{\p}{\p t^j}} \phi\|^2_{L^2}+\|\sum_{\beta\in \hat\Phi_G^+}e(dt^j) \beta(\frac{\p}{\p t^j})e(\omega^\beta)e^*(\omega^\beta) \phi\|^2_{L^2}-2\langle  \nabla_{\frac{\p}{\p t^j}}\phi,\sum_{\beta\in \hat\Phi_G^+}\beta(\frac{\p}{\p t^j})e(\omega^\beta)e^*(\omega^\beta) e^*(dt^j)\phi\rangle_{L^2}\nonumber\\
+&
 \| \sum_{\beta\in \hat\Phi_G^+}e^*(dt^j) \beta(\frac{\p}{\p t^j})e^*(\omega^\beta)e(\omega^\beta) \phi\|^2_{L^2}
-2\langle e^*(dt^j) \nabla_{\frac{\p}{\p t^j}}\phi,\sum_{\beta\in \hat\Phi_G^+}\beta(\frac{\p}{\p t^j})e^*(\omega^\beta)e(\omega^\beta))\phi\rangle_{L^2}\nonumber\\
+&\|d_N\phi\|^2_{L^2}-2\sum_{\beta\in \hat\Phi_G^+}\beta(\frac{\p}{\p t^j})\langle \nabla_{N_\beta}^0 e^*(\omega^\beta)\phi,  e^*(dt^j)\phi\rangle_{L^2}+\| d_N^*\phi\|^2_{L^2}\nonumber\\
\geq &
  \sum_j\| \nabla_{\frac{\p}{\p t^j}} \phi\|^2_{L^2}+  \|\sum_{\beta\in \hat\Phi_G^+} \beta(T_G)e(\omega^\beta)e^*(\omega^\beta) \phi\|^2_{L^2}\nonumber\\
+&
\|d_N\phi\|^2_{L^2}+\| d_N^*\phi\|^2_{L^2}-c_n (\|\nabla\phi\|_{L^2}\sqrt{\sum_j\|e^*(dt^j)\phi\|^2_{L^2}}+\sum_j\|e^*(dt^j)\phi\|^2_{L^2})\nonumber\\
\geq &
  \sum_j\| \nabla_{\frac{\p}{\p t^j}} \phi\|^2_{L^2}+  n^2\| \phi\|^2_{L^2}+\|d_N\phi\|^2_{L^2}+\| d_N^*\phi\|^2_{L^2}\nonumber\\
-&
c_n (\|\nabla\phi\|_{L^2}\sqrt{\sum_j\|e^*(dt^j)\phi\|^2_{L^2}}+\sum_j\|e^*(dt^j)\phi\|^2_{L^2}),
\end{align*}
for some positive constant $c_n$. Specializing to $\phi = b_m$, with $d^*b_m = 0$ and $db_m = z_m$, we have 
\begin{align}\label{zoom}
&\sum_j\| \nabla_{\frac{\p}{\p t^j}} b_m\|^2_{L^2}+  n^2\| b_m\|^2_{L^2}+\|d_N b_m\|^2_{L^2}+\| d_N^*b_m\|^2_{L^2}\nonumber\\
&
-c_n (\|\nabla b_m\|_{L^2}\sqrt{\sum_j\|e^*(dt^j)b_m\|^2_{L^2}}+\sum_j\|e^*(dt^j)b_m\|^2_{L^2})\leq \|z_m\|^2_{L^2}. 
\end{align}
Combining \eqref{zoom}  with   \eqref{coclprice4} yields 
\begin{align}\label{zoom2}
&\sum_j\| \nabla_{\frac{\p}{\p t^j}} b_m\|^2_{L^2}+  n^2\| b_m\|^2_{L^2}+\|d_N b_m\|^2_{L^2}+\| d_N^*b_m\|^2_{L^2}\nonumber\\
&\leq 2C_n (\|\nabla b_m\|_{L^2}\sqrt{\|z_m\|_{L^2}\|b_m\|_{L^2}}+\|z_m\|_{L^2}\|b_m\|_{L^2})+ \|z_m\|^2_{L^2}\nonumber\\
&\leq 2C_n (\alpha\|\nabla b_m\|_{L^2}^2+\frac{4+\alpha^2}{4}\|z_m\|_{L^2}\|b_m\|_{L^2})+ \|z_m\|^2_{L^2}. 
\end{align}
Choosing $\alpha$ sufficiently small, we can absorb the $2C_n\alpha\|\nabla b_m\|_{L^2}^2$ term into the lefthand side to conclude that there is some constant $\hat c_n>0$  such that 
$$\|b_m\|_{L^2}\leq \hat c_n\|z_m\|_{L^2}.$$
 The proof now proceeds as in the cases with $2<\frac{n_G(2)+2n_G(1)}{k-1 +\min\{k-1,n_G(2)\}}$.
\end{proof}

We remark that using the techniques of Section 7, we can use the vanishing of $\mathcal{H}_2^k(X)$, for $k\not = \frac{\text{dim}(X)}{2}$ as a starting point for an alternate  proof of (a slightly stronger version of) Theorem \ref{ThmG}.

\begin{remark}
In the papers \cite{BR20},\cite{BR23a}, and \cite{BR23b}, Bourdon and Remy prove numerous vanishing and nonvanishing theorems for $L_p$-cohomology of symmetric spaces. 
Their vanishing results have significant intersection with those obtained above and also include vanishing results for ranges of $k$ and $p$ that we have not considered in this note. As in Pansu's work, their vanishing theorems are to a significant degree founded on the construction of primitives obtained by integration along orbits of flows. Our estimates utilize infinitesimal data associated with these same flows. Hence one expects significant overlap in the results reached by the two methods.  
\end{remark}


\section{Acknowledgments}
I would like to thank the participants and the organizers of the AMS Special Session on the Singer-Hopf Conjecture in Geometry and Topology in the March 2023  Spring Southeastern Sectional Meeting for stimulating and encouraging this work.
I thank Luca Di Cerbo for numerous discussions on the contents of this note. I thank Marc Bourdon and Antonio Lopez for pointing out an error in an earlier draft of this paper. 
This work was partially supported by the Simons Foundation grant $\#$391000626.

\end{document}